\definecolor{darkgreen}{rgb}{0,0.5,0}
\definecolor{darkblue}{rgb}{0.1,0.1,0.9}
\definecolor{darkred}{rgb}{0.9,0.1,0.1}
\theoremstyle{plain}
\newtheorem{thm}{Theorem}[section]
\newtheorem{cor}[thm]{Corollary}
\newtheorem{lem}[thm]{Lemma}
\newtheorem{prop}[thm]{Proposition}
\newtheoremstyle{named}{}{}{\itshape}{}{\bfseries}{.}{.5em}{\thmnote{#3's }#1}
\theoremstyle{named}
\newcounter{constnum}
\def\const@nt#1{c_{#1}}
\def\c{\@ifnextchar[{\@with}{\@without}}
\def\@with[#1]{%
	\ifcsname constnum@#1\endcsname
	\else
		\stepcounter{constnum}%
		\expandafter\xdef\csname constnum@#1\endcsname{\theconstnum}%
	\fi
	\const@nt{\csname constnum@#1\endcsname}}
\def\@without{\stepcounter{constnum}c_{\theconstnum}}
\theoremstyle{definition}
\newtheorem{definition}[thm]{Definition}
\newtheorem{example}[thm]{Example}
\newtheorem{conjecture}[thm]{Conjecture}
\newcommand{\mscr}[1]{\mathscr{#1}}
\newcommand{\PP}{\mathbb{P}}
\newcommand{\RR}{\mathbb{R}}
\newcommand{\ve}{\varepsilon}
\DeclareMathOperator{\limsups}{limsup*}
\DeclareMathOperator{\liminfs}{liminf*}
\newcommand{\ga}{\gamma}
\newcommand{\de}{\delta}
\newcommand{\la}{\lambda}
\newcommand{\om}{\omega}
\newcommand{\thet}{\theta}
\newcommand{\aint}{\fint}
\newcommand{\Ga}{\Gamma}
\newcommand{\De}{\Delta}
\newcommand{\La}{\Lambda}
\newcommand{\Om}{\Omega}
\newcommand{\tr}{\mbox{tr}} 
\newcommand{\rst}[1]{\ensuremath{{\mathbin\upharpoonright}%
\raise-.5ex\hbox{$#1$}}} 
\newcommand{\calS}{\mathcal{S}}
\newcommand{\bbR}{\mathbb{R}}
\newcounter{gscan}
\newcounter{bwscan}
\newcounter{cscan}
\newcounter{hscan}
\newcounter{fscan}
\newcounter{pscan}
\newcounter{sscan}
\newcounter{iscan}
\newcounter{rscan}
\newcounter{rrscan}
\newcounter{fpscan}
\renewcommand{\therscan}{R\arabic{rscan}}
\renewcommand{\tilde}{\widetilde}
\numberwithin{equation}{section}
\begin{document}
\title[Stochastic Homogenization]{Stochastic Homogenization for Reaction-Diffusion Equations} 
\author{Jessica Lin}
\address{McGill University\\
Department of Mathematics\\
Montreal, QC H3A 0B9, Canada}
\email[Jessica Lin]{jessica.lin@mcgill.ca}

\author{Andrej Zlato\v{s}}
\address{Department of Mathematics\\
University of California San Diego\\
La Jolla, CA 92130, USA}
\email[Andrej Zlatos]{zlatos@ucsd.edu}


\begin{abstract}
In the present paper we study stochastic homogenization for reaction-diffusion equations with stationary ergodic reactions, although some of our results are new even for periodic reactions.  We first show that under suitable hypotheses, initially localized solutions to the PDE asymptotically become approximate characteristic functions of a ballistically expanding Wulff shape.  The next crucial component is the proper definition of relevant front speeds and subsequent establishment of their existence.  We achieve the latter by finding a new relation between the front speeds  and 
the Wulff shape, provided the Wulff shape does not have corners.  Once front speeds are proved to exist in all directions, by the above means or otherwise, we are able to obtain general stochastic homogenization results, showing that large space-time evolution of solutions to the PDE is governed by a simple deterministic Hamilton-Jacobi equation whose Hamiltonian is given by these front speeds.  We primarily consider the case of non-negative reactions but we also extend our results to the more general PDE $u_{t}= F(D^2 u,\nabla u,u,x,\om)$ as long as its solutions satisfy some basic hypotheses including positive lower and upper bounds on spreading speeds in all directions and a sub-ballistic bound on the width of the transition zone between the two equilibria of the PDE.
\end{abstract}

\subjclass[2010]{35K57, 35B27, 35B40, 60H25}

\keywords{stochastic homogenization, reaction-diffusion equations, front speeds, spreading speeds, front propagation, Wulff shape}

\maketitle

\section{Introduction and Main Results}

\subsection{Background and informal discussion of the main results}
The primary motivation for this work is the understanding of long-time behavior of solutions to heterogeneous reaction-diffusion equations in random media. Specifically, we are interested in equations of the form 
\begin{equation}\label{genhomeq}
u_{t}=\De u+f(x,u,\om) \qquad\text{on } (0, \infty)\times \RR^{d},
\end{equation}
with a given initial condition $u(0, \cdot)$ (or, more generally, $u(0, \cdot,\om)$).  The argument $\om$ is an element of some probability space $(\Om, \mathcal{F}, \PP)$ which models the random environment via a random (stationary ergodic in the spatial variable $x$) nonlinear {\it reaction function} $f(\cdot, \cdot, \om)$.  We are primarily concerned with \emph{ignition} and {\it monostable} reactions here (see Subsection \ref{ss.hypmr} for the precise hypotheses), which model phenomena such as combustion, chemical kinetics, or population dynamics, with $u$ representing  temperature, concentration of a reactant, or density of a species.  Nevertheless, several of our results can be generalized to other types of reactions (including {\it bistable} and mixed types), as well as to other ``phase transition'' processes modeled by the more general PDE 
\begin{equation} \label{1.10}
u_{t}= F(D^2 u,\nabla u,u,x,\om)  \qquad\text{on  $(0,\infty)\times\bbR^d$},
\end{equation}
(see Subsection \ref{S.1.3} below).
 This includes, for instance, some (viscous) Hamilton-Jacobi equations with possibly non-convex or non-coercive Hamiltonians, or the porous medium equation.

These equations are frequently used in modeling invasions of one equilibrium state of a physical process  by another, such as forest fires (burned area invades unburned regions) or spreading of invasive species.   It is standard to let these equilibria  be $u^-\equiv 0$ and $u^+\equiv 1$, in which case one considers $f(\cdot,0,\cdot)\equiv f(\cdot,1,\cdot)\equiv 0$ and solutions $0\le u\le 1$ (the latter being guaranteed by $0\le u(0,\cdot)\le 1$). However, our results can again be extended to more general situations, including with $\om$-dependent equilibria $u^-(\cdot,\om)<u^+(\cdot,\om)$.

When studying long-time (and thus also large-scale) evolution of solutions to \eqref{genhomeq}, it is natural to consider the rescaled functions 
\begin{equation}\label{eq:rescale}
u^{\ve}(t,x, \om):=u\left(\frac{t}{\ve}, \frac{x}{\ve}, \om\right),
\end{equation}
with $\ve>0$ small.
This turns \eqref{genhomeq} into
\begin{equation}\label{eq:homeq}
u^{\ve}_{t}=\ve\De u^{\ve}+\frac{1}{\ve}f\left(\frac{x}{\ve}, u^{\ve}, \om\right) \qquad \text{on $(0, \infty)\times \RR^{d}$},
\end{equation}
making the scale of the heterogeneities in $f$ microscopic.  
One might therefore hope that in the limit $\ve\to 0$,  we can observe an effective homogeneous deterministic behavior of solutions. Specifically, if one solves \eqref{eq:homeq} with an initial condition that is independent of $\ve,\om$ (or at least converges to an $\om$-independent limit as $\ve\to 0$), then the solutions $u^\ve$ also converge to some $\om$-independent function $\overline u$ that solves a PDE whose coefficients do not depend on $x$ or $\om$.  This is the principal goal of the theory of homogenization of \eqref{genhomeq} and other PDE.

The first question to answer here is how do the limiting solutions $\overline u$ look like, and which PDE (if any) do they solve.  In many models, the limiting solutions have the same (or even better) regularity as the original ones, and the homogenized PDE are of the same type as the original PDE. This is not the case for  \eqref{genhomeq}.  As one can notice from observation of the physical processes modeled by the PDE, the width of the transition zone between the two equilibria (i.e., the distance of points with $u(t,x)=\eta$ from those with $u(t,x)=1-\eta$, for small $\eta>0$), is frequently uniformly bounded in time.  Indeed, as fires spread through forests, the scale of the burned region grows roughly linearly in time but the actively burning areas are typically confined to  neighborhoods of time-dependent curves whose widths are bounded uniformly in time.  If the solutions to \eqref{genhomeq} exhibit the same {\it bounded width} behavior (see Theorem \ref{thm:bw} below), then the scaling \eqref{eq:rescale} necessarily requires the limiting solutions (if they exist) to only take values 0 and 1, and thus be characteristic functions of time-dependent subsets of $\bbR^d$.

Of course, if general solutions become characteristic functions of sets in the $\ve\to 0$ limit, it is natural to consider initial conditions that are also (approximate) characteristic functions.  Homogenization for \eqref{genhomeq} should therefore involve initial conditions satisfying
\[
\lim_{\ve\to 0}  u^\ve(0,\cdot,\om)= \kappa \chi_{A}
\]
in some sense,  with $\chi_{A}$ the characteristic function of some given initial set $A\subseteq \bbR^d$ and $\kappa\le 1$ close to 1, and the corresponding solutions to \eqref{eq:homeq} should  then  for almost all $\om$ have the limit
\begin{equation} \label{1.20}
\lim_{\ve\to 0}  u^\ve(\cdot,\cdot,\om)=  \chi_{\Theta^A} \qquad (=\overline u)
\end{equation}
in some sense, with $\Theta^A\subseteq(0,\infty)\times\bbR^d$  some $\om$-independent set.  Of course, in that case $\overline u$ cannot solve a second-order PDE like \eqref{genhomeq}.  Indeed, as our main results show, the homogenized solutions will instead solve a first-order Hamilton-Jacobi PDE (specifically, \eqref{eq:hjeq} below) in the viscosity sense, with the set $\Theta^A_t:=\{x\in\bbR^d\,\mid\,(t,x)\in \Theta^A\}$ expanding at any point of its boundary with normal velocity that depends on the normal vector at that point but not at the point itself (with appropriate modifications when  $\partial\Theta^A_t$ does not have normal vectors everywhere).  We note that while sometimes the homogenized solutions also satisfy a (possibly non-isotropic) Huygens principle (see Theorem \ref{T.1.4}(iv) below), this is not always the case.

The reason for the propagation velocity being only dependent on the normal vector is that if $\partial\Theta^A_t$ has a tangent with outer normal $e\in\mathbb S^{d-1}$ at some point, then after rescaling from \eqref{eq:homeq} back to \eqref{genhomeq}, $\Theta^A_t$ will be close to a half-space with outer normal $e$ on a ball of size $O(\frac 1\ve)$ centered at the ``rescaled'' point.  Its propagation speed should therefore be determined by the speed of propagation of solutions starting from (approximate) characteristic functions of half-spaces with normal vector $e$ (i.e., {\it front-like initial data} oriented in direction $e$).  If such (deterministic) {\it front speed} indeed exists, it must be independent of which half-space with normal $e$ we consider due to stationarity and ergodicity of $f$.

Existence of the front speeds for general stationary ergodic reactions in several dimensions $d\ge 2$ is, however, a non-trivial question, and it is the main reason for a dearth of homogenization results in this setting.  (For the case $d=1$, $\Theta^A_t$ is typically an interval and hence its boundary has a trivial geometry, see \cite{nolryz, andrejhom, freidgart, freidbook, nadinrandom, bernadin1D} and the references therein.)  In part due to this, the only such result in dimensions $d\geq 2$ prior to the present paper appears to be homogenization for stationary ergodic   {\it KPP reactions} by Lions and Souganidis \cite[Theorem 9.3]{plltakisvisc}.   
We note that KPP reactions are a subclass of monostable reactions, satisfying $f(x,u,\om)\le f_u(x,0,\om)u$ for all $(x,u,\om)\in \bbR^d\times[0,1]\times\Om$, and were first studied in the one-dimensional homogeneous setting by Kolmogorov, Petrovskii, and Piskunov \cite{KPP} as well as by Fisher \cite{Fisher}.  Crucially, their properties allow one to study them via their linearization at $u=0$, as the two dynamics typically agree in the leading order. This was at the core of the Lions-Souganidis approach, who perform the Hopf-Cole transformation $v^\ve:=\ln u^\ve$ to convert the problem of stochastic homogenization for reaction-diffusion equations with KPP reactions into the problem of stochastic homogenization for viscous Hamilton-Jacobi equations.  

In the case of periodic (in $x\in\bbR^d$) reactions, it is known from the works of Xin \cite{xinflame} and Berestycki and Hamel \cite{berhamel}  that front speeds indeed do exist for very general ignition and monostable reactions.  These are obtained after finding the corresponding {\it pulsating front} solutions to \eqref{genhomeq} in direction $e$, which are of the form $u(t,x)=U(x\cdot e -ct,x)$, with $U$ periodic in the second argument and satisfying the boundary conditions $\lim_{s\to -\infty} U(s,x)=1$ and $\lim_{s\to \infty} U(s,x)=0$ uniformly in $x$.  Here both the front profile $U$ and front speed $c$ are unknown, and the speed of propagation of typical solutions whose initial data vanish on a half-space with inner normal $e$ is the unique (for ignition $f$) or minimal (for monostable $f$) pulsating front speed in direction $e$.  

As these results are more than 15 years old, one might think that homogenization has already been proved for general periodic reactions.  Nevertheless, the step from existence of pulsating fronts to homogenization is far from trivial.  In fact, other than the general stationary ergodic KPP result in \cite{plltakisvisc}, homogenization has previously only been proved for monostable periodic reactions and convex initial sets $A$ with smooth boundaries, in a recent work of  Alfaro and Giletti \cite{alfarogiletti}.  Hence, our treatment of general stationary ergodic reactions also establishes new results for periodic reactions, and we will  obtain as a byproduct not only the result of Alfaro-Giletti (under slightly weaker hypotheses on $f$ and without the requirement of smoothness of $\partial A$) but also  full homogenization for periodic ignition reactions (i.e., for any $A$).  In fact, we prove homogenization whenever \eqref{genhomeq} has a {\it deterministic exclusive front speed} (see Definition \ref{D.1.5} below) in each direction $e$, which is the case for periodic ignition reactions in any dimension.

Homogenization results have, however, been obtained in the case of \eqref{genhomeq} with homogeneous reactions $f(x,u)\equiv f(u)$ and  {\it periodic linear terms}.  In \cite{barlestakis}, Barles and Souganidis develop their theory of generalized front propagation,
which essentially handles crystal-growth-like models where normal growth speeds are given but the boundary of the crystal may be quite rough.  One of its applications is the proof of homogenization in the case of homogeneous  bistable reactions 
with $\int_0^1 f(u)du>0$ and spatially periodic linear terms, under the hypothesis of existence of pulsating fronts in all directions for this model.  ({\it Bistable reactions} have $f(u)<0$  for all small $u>0$, and one of us has in fact shown that pulsating fronts need not exist and homogenization need not hold for heterogeneous bistable reactions, even periodic ones in one dimension $d=1$  \cite{ZlaBist}.)  An important advantage of bistable reactions is that  solutions with small enough initial data converge to 0, which means that {\it if} front speeds do exist, they are automatically the exclusive front speeds from Definition \ref{D.1.5}.  Since  this  convergence does not hold for non-negative reactions, which we  consider in the present paper (and in particular, Hypothesis (H4) from \cite{barlestakis} is not satisfied for them), we 
need to introduce the new concept of exclusive front speeds here.

Homogenization for the $\ve\to 0$ limit of \eqref{eq:homeq} with a homogeneous KPP reaction $f(u)$ and also time-dependent advection $V(t, x, \ve^{-\alpha}t, \ve^{-\alpha}x)$, periodic in the last two arguments and with $\alpha\in(0,1]$, was studied by Majda and Souganidis \cite{majtakis}.  They proved that the limit is 0 and 1 on the sets $\{Z<0\}$ and ${\rm int}\{Z=0\}$, respectively, where the function $Z\le 0$ solves some Hamilton-Jacobi equation on the set $\{Z<0\}$.
The advection field becomes $V(\ve t, \ve x, \ve^{1-\alpha}t, \ve^{1-\alpha}x)$ in the scaling of \eqref{genhomeq}, so if it is constant in the first two arguments and $\alpha=1$, then one obtains \eqref{genhomeq} with a homogeneous KPP reaction  and an $\ve$-independent space-time periodic advection $V(t,x)$.  That is, the homogenization limit is then also the large space-time limit for \eqref{genhomeq} that we are studying here for general stationary ergodic reactions.  
We note that in this ``$\ve$-independent''  case  \cite{majtakis} also identifies a Hamilton-Jacobi equation like  \eqref{eq:hjeq} below that governs the evolution of the homogenization limit, provided $V$ is divergence-free.   Although the relationship to the relevant pulsating fronts  (whose existence had not yet been established at that point) is not investigated in \cite{majtakis}, one can conclude existence of pulsating front speeds for periodic incompressible advections and homogeneous KPP reactions from this result.

While the periodic results for non-KPP reactions are crucially dependent on the existence of pulsating fronts, it is not clear whether some analogous solutions exist for general stationary ergodic reactions.  (Also, since it follows from the results of one of us that no reasonable definition exists that would yield such solutions for general heterogeneous reactions in dimensions $d\ge 2$ \cite{andrejbd}, one can in general only hope for their existence for  almost all $\om\in\Om$ at best.)  In this work, we are therefore left with the task of defining and identifying front speeds {\it without} the existence of some special front solutions.

We achieve this goal  by first defining the front speeds $c^*(e)$ via tracking solutions evolving from (approximate) characteristic functions of relevant half-spaces (see Definition \ref{D.1.3}). Then, under appropriate hypotheses, we identify these front speeds by relating them to another family of speeds that is relevant to the question at hand ---  the  {\it spreading speeds} $w(e)$ (see Remark 1 after Definition \ref{D.1.2}).  
These are the asymptotic speeds of propagation in different directions $e\in\mathbb S^{d-1}$ of solutions starting from compactly supported initial data (with large enough supports so that propagation happens, i.e.,  $\lim_{t\to\infty} u(t,x)=1$ locally uniformly on $\bbR^d$).  In the case of periodic reactions, the spreading speeds are known to exist and 
can be found from the (unique/minimal) pulsating front speeds (which coincide  with our front speeds $c^*(e)$ in the periodic case) via the {\it Freidlin-G\" artner formula}
\begin{equation}\label{eq:fg}
w(e)=\inf_{e'\in\mathbb S^{d-1} \,\&\, e'\cdot e>0} \frac{c^{*}(e')}{e'\cdot e}.
\end{equation}
This was obtained by Freidlin and G\" artner in \cite{freidgart} (see also \cite{morefreid}) for periodic KPP reactions, and was extended to periodic monostable and ignition reactions by Weinberger \cite{wein} and Rossi \cite{rossi}.  Of course, existence of spreading speeds in all directions and the comparison principle show that after scaling down by $t$, general solutions with (large enough) compactly supported initial data converge to the characteristic function of the  {\it Wulff shape}
\begin{equation}\label{eq:wulffdef}
\mathcal{S}:=\left\{se \,|\, e\in \mathbb{S}^{d-1} \text{ and }  s\in\left[0,w(e)\right) \right\} \qquad (\subseteq\bbR^d)
\end{equation}
as $t\to\infty$.

In the case of stationary ergodic reactions, however, we  reverse this process ---  we start by identifying the spreading speeds rather than the front speeds.  This should be an easier task in non-periodic media as it involves solutions evolving from compactly supported data rather than from characteristic functions of half-spaces, contrasting with periodic media, where the pulsating front ansatz turns \eqref{genhomeq} into a degenerate elliptic PDE on the quasi-one-dimensional domain $\bbR\times\mathbb T^d$. We use here the subadditive ergodic theorem (Theorem \ref{thm:set}) together with results guaranteeing that solutions have bounded widths (Theorem~\ref{thm:bw}) to obtain existence of the spreading speeds and the (convex) Wulff shape $\mathcal S$ under appropriate hypotheses (see Theorem \ref{T.1.4}(i) and Theorem \ref{T.1.8}(i)).

Once this is achieved, consider a solution  $u$ of \eqref{genhomeq} evolving from compactly supported initial data and let $y\in\partial\mathcal S$ be any point at which $\partial\mathcal S$ has a tangent hyperplane with some unit outer normal vector $e$.  Then for all large $t$,  the solution $u$ is $o(t)$ close to the characteristic function of the half-space $\{x\cdot e<ty\cdot e\}$ in an $o(t)$ neighborhood of the point $ty\in\partial (t\mathcal S)$.  If we can obtain good enough bounds on the difference of $u$ and the solution $u'$ starting from the (approximate)  characteristic function of this half-space, valid for a time $t'$ during which $u$ expands from $\sim \chi_{t\mathcal S}$ to $\sim \chi_{(t+t')\mathcal S}$, then we would show that near the point $ty$ and on the time interval $[t,t']$, the solution $u'$ is close to the characteristic function of the above half-space moving with speed $w(\frac y{|y|}) \frac y{|y|} \cdot e$ in its normal direction $e$.  This and convexity of $\mathcal S$ would then yield the {\it inverse Freidlin-G\" artner formula}
\begin{equation}\label{eq:fsdef}  
c^{*}(e)=\sup_{e'\in \mathbb{S}^{d-1}} w(e')e'\cdot e.
\end{equation}
In reality, the above argument has to contend with (otherwise uncontrollable) $o(t)$ errors, which means that the $o(t)$ above as well as $t'$ would instead have to be of size $O(t)$, thus causing additional difficulties.   Nevertheless, after carefully calibrating their mutual proportionality constants, we will be able to execute this approach in a  rigorous fashion and obtain existence of front speeds in all directions $e$ that are outer normals to $\mathcal S$ (see Theorem \ref{T.1.4}(ii)).  If $\mathcal S$ has {\it no corners}, then this includes all $e\in\mathbb S^{d-1}$.

We note that Theorem \ref{thm:bw} is restricted to dimensions $d\le 3$, and this limitation is sharp. As a result,  our proof of existence of the Wulff shape for stationary ergodic reactions only applies in this setting.  Nevertheless, if one can prove existence of a Wulff shape in another setting by other means, our method provides existence of front speeds in all its normal directions.  Similarly, as we discuss below, existence of (exclusive) front speeds in all directions is itself also sufficient for our main homogenization results to hold.  This, in particular, is the reason why we are able to establish homogenization for periodic reactions in any dimension.

It is  remarkable that despite the long history of the subject, formula \eqref{eq:fsdef} for normal vectors to $\partial\mathcal S$ appears to be new even in the periodic setting. In fact, our search of the literature for such a formula while writing this paper has only yielded the works of Soravia \cite{soravia} and of Osher and Merriman \cite{oshermerriman}, in which they primarily study the growth of crystals with an {\it a priori given} growth speed $c^*(e)$ in each normal direction $e$ to the crystal's surface (so no reaction-diffusion equations). They show emergence of a Wulff shape $\mathcal S$ from \eqref{eq:wulffdef} for this growth, satisfying \eqref{eq:fg}, and also find that if the function $c^*(\frac y{|y|}) |y|$ is convex, then \eqref{eq:fsdef} holds as well.   Additionally, Osher and Merriman  show that any initial crystal $A\subseteq\bbR^d$ grows in time $t$ into $A+t\mathcal S$ (for each $t>0$) when $c^*(\frac y{|y|}) |y|$ is convex and, conversely,  they observe that if  any initial crystal $A$  grows as $A+t\mathcal S$ for some convex set $\mathcal S$, then the normal speed $c^*$ of this growth satisfies \eqref{eq:fsdef} and $c^*(\frac y{|y|}) |y|$ is convex.  Of course, since for each unit vector $e$ one can choose an initial crystal $A$ whose boundary contains (an open subset of) the hyperplane $\{x\cdot e<0\}$,  \eqref{eq:fsdef} is immediate from the growth being $A+t\mathcal S$.  In contrast, here we only use emergence of the Wulff shape for solutions with compactly supported initial data, which in the large space-time scaling limit corresponds to the Osher-Merriman growth rule $\Theta^A_t=A+t\mathcal S$ for only the set $A=\{0\}$. 

We also note that the Osher-Merriman growth rule (which is the non-isotropic Huygens principle) is the model currently used by Canadian Forest Fire Prediction System. The relevant model, in which $\mathcal S$ is an ellipse whose parameters are determined from environmental factors such as wind speeds, is called Richards equation \cite{richards}.  Our main results for reaction-diffusion equations, namely Theorems \ref{T.1.4}(ii--iv) and \ref{T.1.6}(ii), justify this approach for stationary ergodic media when the Wulff shape is indeed an ellipse (or, more generally, when it has no corners).

It is important to stress here that  we prove \eqref{eq:fsdef} for \eqref{genhomeq} 
only for vectors  that are unit outer normals to $\mathcal S$.  More generally and similarly to \cite{oshermerriman}, \eqref{eq:fsdef} holds for  all $e\in\mathbb S^{d-1}$ precisely when the function $c^*(\frac y{|y|})|y|$ is defined everywhere (we let it be 0 at $y=0$) and is convex, in which case we also recover the  Osher-Merriman growth rule for \eqref{genhomeq} in the asymptotic limit.  This is in fact the case for general KPP reactions \cite{LinZla2}. However, it follows from the work of Caffarelli, Lee, and Mellet \cite[Theorems 2.6 and A.2]{cafleemel} that there exist periodic  ignition reactions in two dimensions for which $c^*(\frac y{|y|}) |y|$ is not convex  --- in which case our results show that the corresponding Wulff shapes must have corners. On the other hand \eqref{eq:fg} always holds (see Theorem \ref{T.1.9}). 

Nevertheless, as long as existence of (exclusive) front speeds in all directions is known --- even if \eqref{eq:fsdef} does not hold for all $e$ and hence $c^*(\frac y{|y|})|y|$ is not convex --- we are still able to obtain homogenization results for \eqref{genhomeq}. 
To achieve this, we  will show that the lower and upper limits of $ u^\ve$ as $\ve\to 0$ are deterministic viscosity  super- and sub-solutions, respectively, to the  Hamilton-Jacobi PDE
\begin{equation}\label{eq:hjeq}
\overline{u}_{t}-c^{*}\left(-\frac{\nabla \overline{u}}{|\nabla \overline{u}|}\right)|\nabla \overline{u}|=0 \qquad\text{on } (0, \infty)\times \RR^{d}.
\end{equation}
(We note that our method of showing this
shares some elements with 
that employed by Barles and Souganidis in their theory of generalized front propagation \cite{barlestakis}.)
The results of Soravia \cite{soravia} and Barles, Soner, and Souganidis \cite{bss} on uniqueness of viscosity solutions to such equations can then be used to show that the super- and sub-solution in fact coincide and hence the limit $\overline u:= \lim_{\ve\to 0} u^\ve$ exists and solves \eqref{eq:hjeq}.  This eventually yields our main homogenization results for \eqref{genhomeq}, Theorems \ref{T.1.4}(iii) and \ref{T.1.6}(ii).  On the other hand, if \eqref{eq:fsdef} holds (hence we have the Osher-Merriman growth rule) and the initial set $A$ is convex, then we can obtain homogenization by a simpler method in Theorem \ref{T.1.4}(iv) without having to resort to the viscosity solutions theory for \eqref{eq:hjeq}.

We should also mention here that existence of a Wulff shape for \eqref{genhomeq} with a homogeneous KPP reaction $f(u)$ and space-time stationary ergodic divergence-free advection $V(t,x, \om)$ satisfying a finite moment condition has been proved by Nolen and Xin \cite{nolenxin}.  
They did not study homogenization or front speeds for that model --- 
indeed, what they call front speeds are actually our spreading speeds $w(e)$.

Finally, let us note that almost everything here applies to the more general PDE \eqref{1.10} under some basic hypotheses, and we collect the corresponding main results in Theorem \ref{T.1.8} in Subsection \ref{S.1.3} below.  The results for \eqref{genhomeq} are contained in Subsection \ref{ss.hypmr}.

\subsection{Relation to homogenization results for Hamilton-Jacobi equations}\label{ss.hj}

There is a vast literature on homogenization for Hamilton-Jacobi and viscous Hamilton-Jacobi equations, such as
\[
u_t + H(\nabla u,x,\om) = \tr(A(x,\om)D^2u) 
\]
with a coercive Hamiltonian $H$ and a positive semi-definite matrix $A$, and we refer to \cite{arms-soug, armstrongtran} and references therein for an overview of the subject.  While we do not study such equations here, let us review the similarities and differences between these results and ours.  

A typical Hamilton-Jacobi homogenization result considers continuous initial data $u^\ve(0,\cdot,\om)=g(x)$ in the rescaled equation 
\begin{equation} \label{1.346}
u^{\ve}_{t}+H\left(\nabla u^\ve,\frac x\ve,\om \right) = \ve\tr \left(A\left(\frac x\ve, \om \right)D^2u^\ve\right)
\end{equation}
for
\[
u^{\ve}(t,x, \om):= \ve u\left(\frac{t}{\ve}, \frac{x}{\ve}, \om\right).
\]
This scaling differs from the natural scaling \eqref{eq:rescale} in the reaction-diffusion case by a factor of $\ve$.  So while in the Hamilton-Jacobi case any interval of values for the unscaled equation is compressed to a single value in the $\ve\to 0$ limit, this is not so in the reaction-diffusion case, where one needs to also show that  the width of the transition zone between the regions where $u^\ve\approx 0$ and $u^\ve\approx 1$ becomes infinitesimally small in the $\ve\to 0$ limit, at least almost surely.

This is not just a technical issue as was mentioned above: due to it, homogenization need not happen for bistable reactions (even periodic ones in one dimension  \cite{ZlaBist}, although we do prove homogenization for general periodic ignition and monostable reactions in any dimension in Theorem \ref{T.1.6a}), and there exist stationary ergodic ignition reactions in dimensions $d\ge 4$ (even i.i.d.~ones) such that the width of the transition zone between $u^\ve\approx 0$ and $u^\ve\approx 1$ is almost surely unbounded in time   \cite{andrejbd}. This is also  why we need to define and establish/assume existence of exclusive front speeds (rather than just of front speeds) in the strongest versions of our homogenization results.

A second important difference is due to the relationship of the respective original and homogenized PDE.
In the $\ve\to 0$ limit, the second order term in \eqref{1.346} disappears and $H$ is replaced by another (homogenized) Hamiltonian  $\overline H(\nabla u)$ (under appropriate hypotheses).  
So the limiting equation is again a (non-viscous) Hamilton-Jacobi equation, and the almost surely deterministic limit $\lim_{\ve\to 0} u^\ve$ remains continuous if $g$ is.  
In the case of reaction-diffusion equations, the homogenized solutions are instead discontinuous viscosity solutions to the  Hamilton-Jacobi equation \eqref{eq:hjeq} (again under appropriate hypotheses).  Moreover, unlike in the Hamilton-Jacobi case, the main term in \eqref{eq:hjeq} does not have a counterpart in the original PDE.
This, in particular, makes it difficult to obtain counterparts of various results in the Hamilton-Jacobi case, where the assumption of convexity of $H$   --- or at least convexity of its sub-level sets \cite{arms-soug}  --- in the first argument has been central (of course, then $\overline H$ has the same property). Indeed, we do not know of a comparable assumption on the reaction $f$ that would simplify the task of proving homogenization for \eqref{genhomeq}, except in the KPP case (see below).  In fact, the abovementioned result from \cite{cafleemel} shows that even for the simplest periodic ignition reactions  in two dimension, of the form $f(x_1,x_2)=f(x_1)$, while existence of front speeds in all directions is known, the Hamiltonian $c^*(-\frac p{|p|})|p|$ in \eqref{eq:hjeq} may have non-convex sub-level sets (and hence be non-convex, too).   
We note that relatively few positive homogenization results have been obtained for Hamilton-Jacobi  and viscous Hamilton-Jacobi equations with non-convex Hamiltonians, see \cite{armstrongtranyu2, kosdavini, gao1, armstrongtranyu1, gao2, card-soug, scottpierre}.  Majority of these require fairly restrictive structural assumptions on the Hamiltonian (e.g., $H(p,x, \om)=H(p)+V(x, \om)$) and/or hold only in 1-dimension, an exception being \cite{scottpierre}, where the authors consider $\alpha$-homogeneous Hamiltonians with $\alpha\ge 1$ that are also i.i.d.~in space.  In fact, counterexamples to stochastic homogenization of Hamilton-Jacobi equations with non-convex Hamiltonians have recently been obtained in \cite{bruno,willtakis}.

As mentioned above, our approach to this problem starts with the proof of existence of the Wulff shape for \eqref{genhomeq} in Section \ref{sec:sspeed}.  The method to achieve this goes back to the study of first passage percolation and similar ideas have also been recently employed in the study of homogenization for Hamilton-Jacobi equations \cite{davini, arms-soug, armstrongtran}, although the reaction-diffusion case is somewhat more involved on account of the need for appropriate bounds on the width of the transition zone discussed above.  The analog of the Wulff shape in the Hamilton-Jacobi case are the asymptotics of the solutions to the so-called metric problem.  These solutions can be shown to be approximate super-correctors for the PDE, and if the Hamiltonian in \eqref{1.346} is convex --- or at least has convex sub-level sets --- in the first argument, then their negatives will also be approximate sub-correctors.  This and appropriate comparison arguments can be used to show that the deterministic limit
\[
\overline H(p):=-\lim_{\delta\to 0} \delta v^\delta (0,\om;p)
\]
exists almost surely for the unique solutions to the macroscopic problem
\[
\delta v^\delta + H(p+\nabla v^\delta,x,\om)=0
\]
(for any fixed $p\in\bbR^d$) \cite{arms-soug, armstrongtran}. Existence of this limit is similar in spirit to existence of our (exclusive) front speeds.  It ultimately yields 
homogenization 
in a standard way via the perturbed test function method introduced by Evans \cite{evanshom}.  We note that in the reaction-diffusion case, this last step is again more involved, needing both existence of exclusive front speeds and the use of the theory of discontinuous viscosity solutions to \eqref{eq:hjeq}.
We perform it in Section~\ref{s:fstohom}.


The above approach fails for general non-convex Hamiltonians, and likely does not have an analog for reaction-diffusion equations.  Instead, we show in Section \ref{s:fs} that the Wulff shape itself becomes a front-like solution (with some asymptotic speed $c^{*}(e)$) in direction $e$ near the point $ty$ (asymptotically as $t\to\infty$) whenever $y$ is a non-corner boundary point of the Wulff shape with  outer normal $e$.  
We note that the spreading speeds, which define the Wulff shape, are essentially the convex dual to the front speeds if the latter exist. Therefore, $y$ above is not a corner of the Wulff shape precisely when positive multiples of $e$ are extreme points of the level sets of $c^*(-\frac p{|p|})|p|$.  This relates to  \cite{card-soug}, where it was proved that if homogenization holds
in probability 
for a fairly general (viscous) Hamilton-Jacobi equation, then correctors exist almost surely for any extreme point of any sub-level set of the effective Hamiltonian.  However, this approach {\it needs to  assume homogenization} in probability (except in the case of isotropic media, under some additional structural assumptions), while our approach via the Wulff shape does not.

This idea can in fact also be applied to Hamilton-Jacobi equations with non-convex Hamiltonians, and one can show that appropriate limits of solutions to a version of the metric problem will almost surely be the desired correctors, provided the level sets of these solutions do not have asymptotic corners in the relevant direction \cite{ZlaHJ}.  Therefore, if the Wulff shape or asymptotic level sets of the solutions to the metric problem have no corners (in which case we also find that $c^*(-\frac p{|p|})|p|$ resp.~$\overline H(p)$ have convex sub-level sets), then full homogenization follows.  The no-corner question seems not an easy one to answer in general, but the answer is always affirmative for isotropic media, when those shapes are just spheres (see Corollary \ref{C.1.7} below and \cite{ZlaHJ}). 

The one exception where neither of the above two difficulties applies are KPP reactions.  Indeed, in this case the dynamics of solutions is determined to the leading order by values arbitrarily close to 0, so the non-compression of values as $\ve\to 0$ does not cause a significant hurdle.  As mentioned above, this also allows one to use the Hopf-Cole transformation $v^\ve:=\ln u^\ve$ to essentially convert the reaction-diffusion PDE into a viscous Hamilton-Jacobi PDE with a convex Hamiltonian, and use results for such equations to obtain homogenization \cite{plltakisvisc}  (a more direct proof of homogenization in the KPP setting will be provided in \cite{LinZla2}).  This is the reason why stochastic homogenization for \eqref{genhomeq} in several dimensions had previously been proved only for KPP reactions.
 
Moreover, for ignition reactions in several dimensions, not even periodic homogenization had been known to hold prior to our work.
Nevertheless, 
we hope that one should be able to overcome the issue of potential corners of the Wulff shape and prove a general homogenization result at least for ignition reactions in dimensions $d\le 3$ that are i.i.d.~in space.  Such a result was proved for Hamilton-Jacobi equations in any dimension in \cite{scottpierre}, with Hamiltonians that are $\alpha$-homogeneous in the first argument with $\alpha\ge 1$ and i.i.d.~in space.  For general non-KPP reaction-diffusion equations, however, this  remains an open question.

\subsection{Hypotheses and main results for \eqref{genhomeq}}\label{ss.hypmr}
%
Let  $(\Om, \mathcal{F}, \PP)$ be a probability space that  is endowed with a group of measure-preserving transformations $\left\{\mscr{T}_{y} :\Om\rightarrow \Om \right\}_{y\in \RR^{d}}$  such that
 \begin{equation*}
 \mscr{T}_{y}\circ \mscr{T}_{z} =\mscr{T}_{y+z}
 \end{equation*}
  for all $y, z\in \RR^{d}$.
Our reaction function $f:\bbR^d\times[0,1]\times\Om\to[0,\infty)$ will then satisfy certain uniform bounds and be stationary ergodic:

\begin{list}{ (\therscan)}
{
\usecounter{rscan}
\setlength{\topsep}{1.5ex plus 0.2ex minus 0.2ex}
\setlength{\labelwidth}{1.2cm}
\setlength{\leftmargin}{1.5cm}
\setlength{\labelsep}{0.3cm}
\setlength{\rightmargin}{0.5cm}
\setlength{\parsep}{0.5ex plus 0.2ex minus 0.1ex}
\setlength{\itemsep}{0ex plus 0.2ex}
}

\item\label{h:stat0} Uniform bounds: $f$ is Lipschitz with constant $M\ge 1$ and
\[
f(x,0,\om)=f(x,1,\om)=0
\]
for each $(x,\om)\in\bbR^d\times\Om$.  There is also $\theta_0\in (0,1)$ and a Lipschitz function $f_0:[0,1]\to[0,\infty)$  with $f_0(u)=0$ for $u\in[0,\theta_0]\cup\{1\}$ and $f_0(u)>0$ for $u\in(\theta_0,1)$  such that 
\[
f(x,u,\om)\ge f_0(u) 
\]
for each $(x,u,\om)\in \bbR^d\times [0,1]\times\Om$. 

If there is also $\theta>0$ such that $f(\cdot,u,\cdot)\equiv 0$ for $u\in[0,\theta]$ and $f$ is non-increasing in $u$ 
on $[1-\theta,1]$ (for each $(x,\om)\in\bbR^d\times\Om$), then we say that $f$ is an {\it ignition reaction}.
\item\label{h:stat}  Stationarity:  for each $(x,y,u,\om)\in \RR^{2d}\times[0,1]\times\Om$ we have
\begin{equation*}
f(x+y, u, \om)=f(x, u, \mscr{T}_{y}\om). 
\end{equation*}
\item\label{h:erg}  Ergodicity: if 
$\mscr{T}_{y}E=E$ for some $E\in \mathcal{F}$ and each  $y\in \RR^{d}$,
then $\PP[E]\in\{0,1\}$.  
\end{list}

The hypotheses on $f_0$ and the definition of ignition reactions in \eqref{h:stat0} obviously imply that ignition reactions vanish for $u$ near 0, that is, the {\it ignition temperature} $\inf\{ u>0\,|\, f(\cdot,u,\cdot)\not\equiv 0\}$ is positive.  On the other hand, 
reactions with $f(x,u,\om)>0$ whenever $u\in(0,1)$ are usually called {\it monostable}.  Some of our results will apply to general reactions satisfying \eqref{h:stat0}--\eqref{h:erg}, while others will only apply to ignition reactions.

It follows from \eqref{h:stat0} that for each $\om\in\Om$, the functions $u\equiv 0,1$ are equilibrium solutions of \eqref{genhomeq}.  The maximum principle then shows that if $0\leq u(0,\cdot, \om)\leq 1$, then $0\leq u(t,\cdot,\om)\leq 1$ for all $t>0$ (we will only consider such solutions here).  However, our results immediately extend to the case when \eqref{genhomeq} has $\om$-dependent equilibria $u^-(\cdot,\om)<u^+(\cdot,\om)$ and more general $f$
(including of bistable and mixed types).  In that case we would also need to assume certain hypotheses analogous to Definition \ref{D.1.1a} below in Theorems \ref{T.1.4}(i) and \ref{T.1.6}(i).  Rather than stating these in detail, we refer the reader to the hypotheses of Theorem 2.7 in \cite{andrejbd} (and to Remark 2 after it), which is the  result that replaces Theorem~\ref{thm:bw} below in the proof of those results.

It follows from the work of Aronson and Weinberger \cite{aronwein} that the equilibrium $u\equiv 1$ is ``more stable'' than $u\equiv 0$ when \eqref{h:stat0} holds.  Specifically, solutions to \eqref{genhomeq} with initial data greater than $\theta_0$ on large enough balls converge to 1 locally uniformly as $t\to\infty$ (see Lemma \ref{lem:slowspread} below).  In fact, this spreading occurs  (asymptotically) at speeds no less than $c_0>0$, the {\it asymptotic spreading speed} and {\it traveling front speed} for the homogeneous ignition reaction $f_0$.  This speed is the one from the unique (up to translation in $s$) solution $(c_0,U_0)$ to $U_0''(s)+c_0U_0'(s)+f_0(U_0)=0$ with boundary conditions $\lim_{s\to -\infty} U_0(s)=1$ and $\lim_{s\to \infty} U_0(s)=0$ (see \cite{aronwein}), which means that for any $e\in\mathbb S^{d-1}$, the function $u(t,x):=U_0(x\cdot e-c_0t)$ is a traveling front solution for  
 \begin{equation} \label{1.3}
 u_t=\Delta u+f_0(u)
 \end{equation}
  moving in direction $e$.
 
To obtain the existence of a deterministic Wulff shape for \eqref{genhomeq}, we will need one more hypothesis on $f$, which is relatively mild for ignition reactions but introduces more stringent limits on the behavior of monostable reactions at small values of $u$.    Loosely speaking, we will require that the solutions to \eqref{genhomeq} are {\it pushed} (as opposed to {\it pulled}), meaning that their dynamics are determined by the values of $f$ at ``intermediate'' $u$ (rather than at $u$ near 0).  We note that this is not the case for {\it KPP reactions}, whose solutions are pulled in the above sense.  

We will therefore consider reactions that are not too strong at small $u$, with the strength of the reaction at some $u$ being $\frac{f(x,u,\om)}u$ (which is the exponential rate of growth of solutions to the ODE $u_t=f(x,u,\om)$).  This obviously does not affect ignition reactions at all, but we will also need to assume that once the reaction does become strong as $u$ increases, it cannot become arbitrarily weak until $u\approx 1$.  This essentially prevents the decoupling of the propagation of intermediate values of $u$ from the propagation of values near 1 (see \eqref{def:bw} below).

To satisfy both these requirements, we will assume in Theorems \ref{T.1.4}(i) and \ref{T.1.6}(i) below that $f\in \mathcal{F}(f_0,M,\zeta,\xi)$, for some $\zeta<c_0^2/4$ and $\xi>0$, with the class of reactions $\mathcal{F}(f_0,M,\zeta,\xi)$ defined below. (This hypothesis excludes KPP reactions, as any KPP $f\ge f_0$ is known to satisfy $\inf_{(x,\om)\in\bbR^d\times\Om} f_u(x,0,\om)\ge c_0^2/4$.)  We use the convention $\inf\emptyset=\infty$.

\begin{definition} \label{D.1.1a}
For $f_0,M$ from \eqref{h:stat0} and $\zeta, \xi>0$, 
 let $\mathcal{F}(f_0,M,\zeta,\xi)$ be the class of all $f$ from \eqref{h:stat0} such that 
\begin{equation}  \label{1.4b}
 \inf_{(x,\om)\in\bbR^d\times\Om} \, \inf_{u\in[\gamma_f(x,\om;\zeta),\theta_0]} f(x,u,\om)  \ge \xi,
\end{equation}
where
\begin{equation} \label{1.4a}
 \gamma_f(x,\om;\zeta):= \inf \{ u\ge 0 \,|\, f(x,u,\om)> \zeta u \}.
\end{equation}
\end{definition}
\smallskip

This
and the results from \cite{andrejbd} on {\it bounded width} of solutions to \eqref{genhomeq} (see Theorem \ref{thm:bw} below), which guarantee that the relevant solutions are pushed, will enable us to prove existence of a deterministic Wulff shape for \eqref{genhomeq} in dimensions $d\le 3$.  However, if one can show by some other means that appropriate solutions are pushed in a very weak sense (see Theorem \ref{T.1.8}(i) below), then the hypotheses $f\in \mathcal{F}(f_0,M,\zeta,\xi)$ and $d\le 3$ are not needed to obtain a deterministic Wulff shape.  Moreover, our homogenization results also do not specifically require these hypotheses. 

Next we state our definition of a (deterministic) Wulff shape for \eqref{genhomeq}. 

\begin{definition} \label{D.1.2}
%
Assume \eqref{h:stat0}, and let $R_0>0$ be large enough so that the solution to \eqref{1.3} with initial data $u(0,\cdot)=\frac {1+\theta_0}2\chi_{B_{R_0}(0)}$ converges locally uniformly to 1 as $t\to\infty$ (see Lemma \ref{lem:slowspread} below).
For any fixed $\om\in\Om$, 
let $u_\om$ 
solve \eqref{genhomeq} with initial data 
\begin{equation} \label{1.21}
u_\om(0,\cdot) = \frac {1+\theta_0}2\chi_{B_{R_0}(0)}. 
\end{equation}
If there is a continuous function $w:\mathbb{S}^{d-1}\to(0,\infty)$ such that with the (open bounded) set $\mathcal S$ from \eqref{eq:wulffdef}
we have 
\begin{align}
\lim_{t\rightarrow\infty}  
\inf_{x\in (1-\de) \mathcal{S}t} u_\om(t,x) & =1, \label{eq:lbsxi'}
\\ \lim_{t\rightarrow\infty} 
\sup_{x\notin (1+\de) \mathcal{S}t} u_\om(t,x) & =0 \label{eq:ubsxi'}
\end{align}
for  each $\de>0$, 
then we say that $\mathcal S$ is a {\it Wulff shape} for \eqref{genhomeq} with this $\om\in\Om$. If there is $\Om_{0}\subseteq \Om$ with $\PP[\Om_{0}]=1$ such that \eqref{genhomeq} with each $\om\in\Om_0$ has the same Wulff shape $\mathcal S$, then we say that $\mathcal S$ is a {\it deterministic Wulff shape} for \eqref{genhomeq}.
\end{definition}

{\it Remarks.}  1.  Of course, then $w(e)$ is the {\it (deterministic) spreading speed} in direction $e$ for \eqref{genhomeq}.
\smallskip

2. One may wonder about the choice of initial data for $u_\om$, but the comparison principle shows that the validity of \eqref{eq:lbsxi'} and \eqref{eq:ubsxi'} is independent of this choice as long as the initial data are non-negative, compactly supported, have supremum less than one, and the resulting solutions to \eqref{1.3}  converge to 1 locally uniformly as $t\to\infty$ (see the start of Section \ref{sec:sspeed}).   In particular, the definition is independent of the choice of $R_0$ as long as it is large enough.
\smallskip


In addition to long time evolution of solutions starting from compactly supported initial data, we will also need to consider the case of front-like initial data.  As explained in the introduction, while it is not clear whether existence of traveling or pulsating front solutions in periodic media  extends to existence of some type of front-like solutions in random media, we will show that the analogous question for front speeds can be answered in the affirmative in some cases.  We will use the following definition for the latter.

\begin{definition} \label{D.1.3}
Assume \eqref{h:stat0}, and for any $(\om,e)\in\Om\times\mathbb S^{d-1}$,
let $u_{\om,e}$ 
solve \eqref{genhomeq} with initial data 
\[
u_{\om,e}(0,\cdot) = \frac {1+\theta_0}2\chi_{\{x\cdot e< 0\}}. 
\]
Fix any $(\om,e)\in\Om\times\mathbb S^{d-1}$. If there is $c^*(e)\in\bbR$ such that for  each  compact set $K\subseteq  \{x\cdot e>0\}\subseteq\bbR^d$ we have 
\begin{align*}
\lim_{t\to \infty}   \inf_{x\in (c^*(e)e-K)t} u_{\om,e}(t,x) &= 1,  
\\ \lim_{t\to \infty}  \sup_{x\in (c^*(e)e+K)t} u_{\om,e}(t,x) &= 0,
\end{align*}
then we say that $c^*(e)$ is a {\it front speed} in direction $e$ for \eqref{genhomeq} with the fixed $\om\in\Om$.
If there is $\Om_{0}\subseteq \Om$ with $\PP[\Om_{0}]=1$ such that \eqref{genhomeq} with each $\om\in\Om_0$ has the same front speed $c^*(e)$  in direction $e$, then we say that $c^*(e)$ is a {\it deterministic front speed} in direction $e$ for \eqref{genhomeq}.
\end{definition}

{\it Remarks.} 1. One can again consider instead any initial data
\[
(\theta_0+\alpha)\chi_{\{x\cdot e< -\alpha^{-1}\}} \le u_{\om,e}(0,\cdot)  \le (1-\alpha)\chi_{\{x\cdot e\le \alpha^{-1}\}},
\]
with any $\alpha>0$.  
\smallskip

2. In both these definitions we do require vanishing of the initial data for large $|x|$ resp.~$x\cdot e$, rather than just convergence to 0.  The latter might result in faster spreading speeds and front speeds depending on the decay rates and on $f$ (cf.~Definition \ref{D.1.5} below).  The speeds we define here could therefore be also called ``minimal front speeds'', but we do not use this terminology here.
\smallskip

3. Definition 1.3 appears to be the first definition of front speeds that does not rely on the existence of special solutions (such as traveling and pulsating fronts in the homogeneous and periodic settings).  However, it is conceptually related to the (time-independent) planar metric problem introduced by Armstrong and Cardaliaguet in \cite{scottpierre} in their study of stochastic homogenization for quasilinear viscous Hamilton-Jacobi equations. In their work, for each $e\in \mathbb{S}^{d-1}$, the solution of this problem at a point $z$ can be interpreted as a ``distance'' (relative to the Hamiltonian in question) from $z$ to the half-space $\{x\cdot e< 0\}$.  
They show that under appropriate hypotheses, such solutions converge almost surely and locally uniformly to  $c_ez\cdot e$ as $\ve\to 0$, with $c_e$  some constant.
\smallskip

The reason for only considering compact sets $K$ in Definition \ref{D.1.3} is unboundedness of the hyperplane $\{x\cdot e=0\}$ when $d\ge 2$.  In general, one may therefore expect to find arbitrarily large exceptional regions in its neighborhood, on which we may observe propagation with speeds different from $c^*(e)$ on arbitrarily long time scales.  But since $f$ is stationary ergodic, distance of such regions from the origin grows very quickly with their size for almost all $\om\in\Om$, and they will therefore not pose a threat for compact $K$.  We also note that lower and upper bounds on the speed of propagation in \eqref{1.1} (see Lemmas \ref{lem:slowspread} and \ref{L.4.2} below) allow one to replace compact sets $K\subseteq  \{x\cdot e>0\}$ in Definition \ref{D.1.3} by cones $C_{\delta}:=\{x\cdot e\ge \delta+\delta|x-(x\cdot e)e|\}$ for all $\delta>0$. 

With these definitions we can now state our first main result for \eqref{genhomeq} with general stationary ergodic reactions (including both monostable and ignition ones), whose homogenization parts (iii) and (iv) apply to convex initial sets $A$. As mentioned in the introduction, if deterministic front speeds exist in each direction and the inverse Freidlin-G\"artner formula \eqref{eq:fsdef} holds, we in fact obtain the Osher-Merriman growth rule here.  Specifically, the homogenized solution is $\chi_{\Theta^{A, \mathcal{S}}}$, where $\mathcal S$ is the Wulff shape and 
\[
\Theta^{A, \mathcal{S}}:=\{(t,x)\in(0,\infty)\times\bbR^d \,|\, x\in A+t\mathcal{S} \}.
\]
It is not difficult to see (and follows from the results in Section \ref{s:fstohom} below) that in this case, 
$\chi_{\Theta^{A, \mathcal{S}}}$
 is a viscosity solution to the first order Hamilton-Jacobi PDE \eqref{eq:hjeq}.
We actually then also have $\Theta^{A, \mathcal{S}}=\Theta^{A, c^{*}}$, where the open set
\begin{equation} \label{1.40'}
\Theta^{A, c^{*}}:=\{(t,x)\in(0,\infty)\times\RR^d \,|\, \overline{v}(t,x)>0\}
\end{equation}
is obtained by taking any uniformly continuous function $\overline{v}_{0}$ on $\bbR^d$  satisfying $\overline v_0>0$ on $A$ and $\overline{v}_0<0$ on $\bbR^d\setminus  \overline A$, and letting $\overline v$ be the unique viscosity solution  to \eqref{eq:hjeq} with initial data $\overline{v}(0,\cdot)=\overline{v}_{0}$ (then also $\Theta^{A, c^{*}}$ is independent of the specific choice of $\overline{v}_0$ and $\partial \Theta^{A, c^{*}}$ has zero measure, see Section \ref{s:fstohom}).  Our homogenization result here continues to hold even if \eqref{eq:fsdef} does not hold for all directions $e$, with existence of deterministic front speeds being the only requirement and $\chi_{\Theta^{A, c^{*}}}$ the limiting function. 

Finally, for the sake of generality, we allow for $O(1)$ shifts  and $o(1)$ errors in initial data as $\ve\to 0$ in \eqref{eq:homeq}.  For $A\subseteq\bbR^d$ and $r> 0$, we therefore let $B_{r}(A)=\bigcup_{x\in A}B_{r}(x)=A+B_r(0)$ and $A_r^0:=A\setminus \overline{B_r(\partial A)}$.  
For the sake of completeness, let us also denote $B_0(A):=\overline{A}$ and $A^0:=A^0_0:={\rm int}(A)$.

\begin{thm} \label{T.1.4}
Assume \eqref{h:stat0}--\eqref{h:erg} and that $A\subseteq\RR^d$ is open.

(i) If 
$d\le 3$ and $f\in \mathcal{F}(f_0,M,\zeta,\xi)$ for some $\zeta<c_0^2/4$ and $\xi>0$, then \eqref{genhomeq} has a  deterministic Wulff shape.

(ii)  If  \eqref{genhomeq} has a deterministic Wulff shape $\mathcal S$, then $\mathcal S$ is convex.  Also, if $e\in \mathbb{S}^{d-1}$ is a unit outer normal of $\mathcal{S}$ and $w$ is given by \eqref{eq:wulffdef}, then \eqref{genhomeq} has a deterministic front speed in direction $e$, given by \eqref{eq:fsdef}.

(iii) If \eqref{genhomeq} has  a deterministic front speed $c^{*}(e)$ in each direction $e\in \mathbb{S}^{d-1}$,  then for almost all $\om\in\Om$ the following holds.  If $A$ is convex, $\alpha>0$,  and $u^\ve(\cdot,\cdot,\om)$ solves \eqref{eq:homeq} and
\begin{equation} \label{4.4a}
(\theta_0+\alpha)\chi_{A_{\psi(\ve)}^0+y_\ve} \le u^\ve(0,\cdot,\om)\le (1-\alpha)\chi_{B_{\psi(\ve)}(A)+y_\ve}
\end{equation}
for each  $\ve>0$, with some $y_\ve\in B_{1/\alpha}(0)$ and $\lim_{\ve\to 0}\psi(\ve)=0$, then 
\begin{equation} \label {1.30}
\lim_{\ve\to 0}  u^\ve(t,x+y_\ve,\om)= \chi_{\Theta^{A, c^{*}}}(t,x)
\end{equation}
locally uniformly on $ ([0,\infty)\times\RR^d) \setminus \partial \Theta^{A, c^{*}}$ (and $\partial \Theta^{A, c^{*}}$ has zero measure).

(iv) If \eqref{genhomeq} has  a deterministic front speed $c^{*}(e)$ in each direction $e\in \mathbb{S}^{d-1}$, then it
has a deterministic Wulff shape $\mathcal S$ with $w$ from \eqref{eq:wulffdef} satisfying \eqref{eq:fg} for each $e\in \mathbb{S}^{d-1}$.  Moreover, if \eqref{eq:fsdef} holds for each $e\in \mathbb{S}^{d-1}$ 
(i.e., $c^*(\frac y{|y|}) |y|$ is convex),
then $\Theta^{A, c^{*}}=\Theta^{A, \mathcal{S}}$.
\end{thm}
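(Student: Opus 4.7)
The plan handles the four parts in order, with (ii) containing the main new idea. For (i), I would set up a first-passage-percolation-type functional: fix a threshold $\theta\in(\theta_0,1)$ and, for $x\in\bbR^d$ and $\om\in\Om$, define $\tau(x,\om)$ to be the first time the solution $u_\om$ reaches $\theta$ at $x$. Stationarity of $f$ and the comparison principle yield approximate subadditivity of $\tau$ along rays, with the error $O(1)$ rather than $o(t)$ thanks to the bounded-width estimates of Theorem~\ref{thm:bw}, which crucially require $d\le 3$ and $f\in \mathcal{F}(f_0,M,\zeta,\xi)$. Applying Theorem~\ref{thm:set} in each fixed rational direction produces a deterministic limit $1/w(e)$; the lower bound $c_0>0$ and the upper bound from Lemmas~\ref{lem:slowspread} and \ref{L.4.2} keep $w(e)\in(0,\infty)$, and a monotonicity/interpolation argument in $e$ together with the usual countable-union trick promotes $w$ to a continuous function on $\mathbb{S}^{d-1}$ on a single full-probability event. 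Properties \eqref{eq:lbsxi'}--\eqref{eq:ubsxi'} then follow by comparing $u_\om$ with finitely many test solutions launched from balls whose convex hulls approximate $t\mcal S$ from inside and outside.

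Part (ii) is the heart of the argument. Convexity of $\mcal S$ follows from stationarity and a Brunn-type comparison: if neighborhoods of $ty_1$ and $ty_2$ are reached at time $t$, then restarting from two well-separated bumps at these locations and using bounded width to turn them into effective planar fronts forces reachability of the midpoint, giving convexity in the limit. For the inverse Freidlin-G\"artner identity \eqref{eq:fsdef}, fix $y\in\partial\mcal S$ with unit outer normal $e$ and a small $\eta>0$. By (i), for large $t$ the solution $u_\om(t,\cdot)$ is $o(t)$-close to $\chi_{t\mcal S}$ on balls of radius $O(t)$; since $\partial\mcal S$ has $e$ as a supporting normal at $y$, on a ball of radius $\eta t$ around $ty$ the set $t\mcal S$ is sandwiched between two half-spaces with outer normal $e$ passing through $ty\pm o(t)e$. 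Combining this sandwich with the bounded-width profile of the transition zone lets one squeeze $u_\om$ between spatial translates of $u_{\om,e}$ at time $t$; running the comparison principle forward on $[t,(1+\eta)t]$ pins these translates to move with normal speed $y\cdot e=w(y/|y|)\,(y/|y|)\cdot e$, so $c^*(e)$ exists and equals $y\cdot e$. Taking the supremum over all boundary points with outer normal $e$ yields \eqref{eq:fsdef}. The main obstacle is exactly the calibration flagged just before \eqref{eq:fsdef}: the $o(t)$ errors from the Wulff-shape approximation, the comparison time $\eta t$, and the Lipschitz propagation of errors for \eqref{genhomeq} must be balanced so that the gain $(y\cdot e)\eta t$ dominates all error contributions; this is what forces one to work at the coarser scale $O(t)$ throughout.

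For (iii), with deterministic front speeds $c^*(e)$ in hand, the strategy is a perturbed test function argument in the spirit of Evans and Barles-Souganidis: the microscopic-scale half-space solutions $u_{\om,e}$ (rescaled by $\ve$) act as barriers showing that the semi-relaxed upper and lower limits of $u^\ve(\cdot,\cdot,\om)$ are, almost surely, deterministic discontinuous viscosity sub- and supersolutions of \eqref{eq:hjeq}. Convexity of $A$ together with \eqref{4.4a} supplies matching initial traces $\chi_A$, and uniqueness of such discontinuous viscosity solutions (Soravia, Barles-Soner-Souganidis) identifies both semi-relaxed limits with $\chi_{\Theta^{A,c^*}}$; the zero-measure claim on $\partial\Theta^{A,c^*}$ follows from finite propagation for the Hamilton-Jacobi flow generated by $\overline v$. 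For (iv), \eqref{eq:fg} is extracted from the same sandwich idea in reverse: any point in $t\mcal S$ is reached as the time-$t$ position of a front moving in some direction $e'$ with speed $c^*(e')$, and the infimum over feasible directions with $e'\cdot e>0$ produces $w(e)$. When $c^*(y/|y|)|y|$ is convex, a direct verification shows that $\chi_{\Theta^{A,\mcal S}}$ is itself a viscosity solution of \eqref{eq:hjeq}, so uniqueness identifies it with $\chi_{\Theta^{A,c^*}}$, completing the Osher-Merriman identity.
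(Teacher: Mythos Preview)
Your outline for (i) is essentially the paper's argument, and for (iv) it is close enough (the paper uses the Hopf--Lax formula with the Lagrangian \eqref{eq:lagrangian} to identify $\Theta^{A,c^*}$ with $\Theta^{A,\mathcal S}$ directly, rather than verifying that $\chi_{\Theta^{A,\mathcal S}}$ solves \eqref{eq:hjeq}, but both work). There are, however, genuine gaps in (ii) and (iii).

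\textbf{Part (ii).} You invoke bounded width in the sandwich step, but bounded width is not a hypothesis of (ii); only the existence of a deterministic Wulff shape is. More seriously, your proposed comparison---squeezing $u_\omega$ between spatial translates of $u_{\omega,e}$ at time $t$ and running both forward---does not close as written. You do not yet know how $u_{\omega,e}$ evolves (that is the conclusion), so sandwiching $u_\omega$ between translates of $u_{\omega,e}$ yields information about $u_\omega$, not about $u_{\omega,e}$; and even reversing the roles, the local comparison via Lemma~\ref{L.4.2} only controls the difference for times shorter than the comparison radius divided by $c'$, so ``radius $\eta t$, time $\eta t$'' fails unless $c'<1$. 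The paper resolves both issues by first upgrading the Wulff-shape limits to a \emph{strong} form (Proposition~\ref{P.3.3}): the convergence in \eqref{eq:lbsxi'}--\eqref{eq:ubsxi'} holds uniformly over starting points $|y|\le\Lambda t$, via Egorov plus Wiener's ergodic theorem. With this in hand, one launches a compactly supported solution from a point $-CTw(e')e'$ with $C$ large, lets it run until it dominates the half-space datum on $B_T$, and then uses the strong Wulff asymptotics (valid uniformly in the far-away starting point) together with tangency of $\partial\mathcal S$ at $w(e')e'$ to bound $u_{\omega,e}$ from above. The parameter $b<\delta'(1+c')^{-1}$ is what makes the Lemma~\ref{L.4.2} error decay. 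Proposition~\ref{P.3.3} is the ingredient your sketch is missing, and without it the calibration cannot be carried out.

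\textbf{Part (iii).} Your plan is to show that both half-relaxed limits are viscosity sub- and supersolutions of \eqref{eq:hjeq}. The supersolution side goes through using only (strong) front speeds, as you say. But the subsolution argument for $\chi_{[(0,\infty)\times\RR^d]\setminus\Theta^2}$ requires showing that if $\limsups u^\ve$ is positive somewhere ahead of the front, it stays positive---and this needs \emph{exclusive} front speeds (Definition~\ref{D.1.5}), which are not assumed in (iii). The paper's proof of (iii) is therefore asymmetric: it proves the supersolution property of $\chi_{\Theta^1}$ as you describe, but replaces the subsolution argument entirely by the convexity of $A$. Since $A$ is an intersection of half-spaces, comparison with the half-space solutions $u_{\omega,e}$ directly shows $\limsups u^\ve=0$ outside the set $\Theta^3$ of \eqref{5.23}; one then checks $\Theta^3\subseteq\Theta^{A,c^*}$ via the Hopf--Lax formula for \emph{concave} initial data (the signed distance to $\partial A$), and closes with the time-shift trick against the supersolution $\chi_{\Theta^1}$. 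Your symmetric approach would prove Theorem~\ref{T.1.6}(ii), not Theorem~\ref{T.1.4}(iii).
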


{\it Remarks.}  1.   (ii) shows that Wulff shapes with tangent hyperplanes can give rise to front speeds in stationary ergodic media.  Here, a hyperplane $H\subseteq\mathbb R^d$ is tangent to $\mathcal S$ at $y\in\partial\mathcal S\cap H$ when  $\partial \mathcal S\cap B_\delta(y)\subseteq B_{\phi(\delta)}(H)$ for each $\delta>0$, with $\lim_{\delta\to 0} \frac 1\delta\phi(\delta)=0$.  (If $\mathcal S$ is also convex, this is equivalent to $H$ being the unique supporting hyper-plane for $\mathcal S$ at $y$.)
If $e\in\mathbb S^{d-1}$ is the unit normal vector to this $H$ such that $y+se\notin \mathcal S$ for all small $s>0$, then we say that  $\mathcal S$ has unit outer normal $e$ (at $y$).  If each $e\in\mathbb S^{d-1}$ is a unit normal of $\mathcal S$, then we say that $\mathcal S$ has no corners.
\smallskip


2.  Homogenization results are typically stated with $\psi\equiv 0$ and $y_\ve=0$ above.  We use the present form of (iii) for the sake of generality.
\smallskip

3.  (iii) obviously extends to initial conditions that can take the value $\theta_0$ on $\partial A+y_\ve$ (in the limits $x\to\partial A+y_\ve$ and $\ve\to 0$) because $\alpha>0$ is arbitrary and the convergences are locally uniform.  We could even consider initial data with some values in $(0,\theta_0]$, but then there would be a transient initial time interval during which the limiting solution $\overline u$ would also have values in $(0,\theta_0]$.  The  region $\{\overline u(t,\cdot)=1\}$ would then invade the region $\{\overline u(t,\cdot)\in(0,\theta_0]\}$ (both time-dependent) at speeds that would depend on the unit outer normal vector to the former region at each point $x$ of its boundary as well as on the value $\overline u (t,x)$ at that point.  We do not pursue this generalization here.
\smallskip

4.  Note that if the claim in (iii) holds, then it follows for any $e\in\mathbb S^{d-1}$ that $c^*(e)$ is the deterministic front speed in direction $e$.
\smallskip



Under a stronger hypothesis concerning propagation speed of front-like solutions to \eqref{genhomeq}, we are able to obtain homogenization for general initial sets $A$ and more general initial conditions than \eqref{4.4a}.  Let us start with the following definition.

\begin{definition} \label{D.1.5}
Assume \eqref{h:stat0}, and for $(\om,e,\alpha)\in\Om\times\mathbb S^{d-1}\times[0,1]$,
let $u_{\om,e,\alpha}$ 
solve \eqref{genhomeq} with initial data 
\[
u_{\om,e,\alpha}(0,\cdot) = \chi_{\{x\cdot e< 0\}} + \alpha \chi_{\{x\cdot e\ge 0\}}. 
\]
Fix any $(\om,e)\in\Om\times\mathbb S^{d-1}$. If \eqref{genhomeq} with the fixed $\om\in\Om$ has front speed $c^*(e)$ in direction $e$, and for each  compact set $K\subseteq  \{x\cdot e>0\}$ there is $\beta_{K,e}:(0,1]\to(0,1]$ with $\lim_{\alpha\to 0}\beta_{K,e}(\alpha)=0$ such that 
\[
\limsup_{t\to \infty} \sup_{x\in (c^*(e)e+K)t} u_{\om,e,\alpha}(t,x) \le\beta_{K,e}(\alpha)
\]
for each  $\alpha\in(0,1]$, then we say that $c^*(e)$ is an {\it exclusive front speed} in direction $e$ for \eqref{genhomeq} with the fixed $\om\in\Om$.
If there is $\Om_{0}\subseteq \Om$ with $\PP[\Om_{0}]=1$ such that \eqref{genhomeq} with each $\om\in\Om_0$ has the same exclusive front speed $c^*(e)$  in direction $e$ (with the same $\beta_{K,e}$), 
then we say that $c^*(e)$ is a {\it deterministic exclusive front speed} in direction $e$ for \eqref{genhomeq}.  
\end{definition}

{\it Remark.}  For \eqref{genhomeq} with ignition reactions, one can actually choose $\beta_{K,e}(\alpha)=\alpha$ for all sufficiently small $\alpha$ (see the proofs of Theorems \ref{T.1.6}(i) and \ref{T.1.6a}(ii) below).  
We will also see in Theorem \ref{T.4.4} that under very mild hypotheses, $\beta_{K,e}$ can be chosen to be independent of $e$.
\smallskip

In particular, it follows from this definition that if $c^*(e)$ is an exclusive front speed for \eqref{genhomeq}, then solutions that satisfy $\lim_{x\cdot e\to\infty} u(0,x)=0$ and $\liminf_{x\cdot e\to-\infty} u(0,x)>\theta_0$ all propagate with exact speed $c^*(e)$ in direction $e$ (in the sense of the above definitions).  Note that this is possible for ignition reactions but generally not  for monostable reactions.  In fact, if for some $\om\in\Om$ we have $\inf_{x\in\bbR^d} f(x,u,\om)>0$ for each $u\in(0,1)$, then the solutions from Definition \ref{D.1.5} satisfy $\lim_{t\to\infty} \inf_{x\in \bbR^d} u(t,x)=1$ whenever $\alpha>0$.


Part (ii) of the following result shows that existence of deterministic exclusive front speeds in all directions yields homogenization for all initial sets $A$.  In particular, both its parts can be combined with Theorem \ref{T.1.4}(i,ii) to obtain a stronger version of Theorem \ref{T.1.4}(iii) for ignition reactions.

\begin{thm} \label{T.1.6}
Assume \eqref{h:stat0}--\eqref{h:erg} and that $A\subseteq\RR^d$ is open.

(i) If   $d\le 3$, an ignition reaction $f\in \mathcal{F}(f_0,M,\zeta,\xi)$ for some $\zeta<c_0^2/4$ and $\xi>0$, and \eqref{genhomeq} has a deterministic front speed in direction $e\in\mathbb S^{d-1}$, then \eqref{genhomeq} has a deterministic exclusive front speed in direction $e$.

(ii)  If \eqref{genhomeq} has a deterministic exclusive front speed   $c^{*}(e)$ in each direction $e\in\mathbb S^{d-1}$, then for almost all $\om\in\Om$ the following holds.  If $\alpha>0$ and $u^\ve(\cdot,\cdot,\om)$ solves \eqref{eq:homeq} and
\begin{equation*} 
(\theta_0+\alpha)\chi_{A_{\psi(\ve)}^0+y_\ve} \le u^\ve(0,\cdot,\om)\le \chi_{B_{\psi(\ve)}(A)+y_\ve} + \psi(\ve) \chi_{\bbR^d\setminus(B_{\psi(\ve)}(A)+y_\ve)}
\end{equation*}
for each  $\ve>0$, with some $y_\ve\in B_{1/\alpha}(0)$ and $\lim_{\ve\to 0}\psi(\ve)=0$, then \eqref{1.30} holds
locally uniformly on $ ([0,\infty)\times\RR^d) \setminus \partial\Theta^{A, c^{*}}$ (and $\partial \Theta^{A, c^{*}}$ has zero measure).
\end{thm}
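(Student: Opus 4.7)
\textit{Part (i).} The plan is to leverage the ignition structure to upgrade the plain front-speed property to the exclusive one. The lower bound in Definition \ref{D.1.3} for $u_{\om,e,\alpha}$ is immediate from the parabolic comparison principle: $u_{\om,e,\alpha}(0,\cdot)\ge u_{\om,e}(0,\cdot)$ pointwise gives $u_{\om,e,\alpha}\ge u_{\om,e}$, and the first limit then inherits from the deterministic front-speed hypothesis. Since $f$ is ignition with threshold $\theta$ and one may restrict to $\alpha\le\theta$, the constant $\alpha$ is a stationary solution of \eqref{genhomeq}, so also $u_{\om,e,\alpha}\ge\alpha$. For the upper bound in Definition \ref{D.1.5} I would introduce $v:=u_{\om,e,\alpha}-\alpha\ge 0$, which solves a stationary ergodic ignition equation with reaction $g(x,v,\om):=f(x,v+\alpha,\om)$ of threshold $\theta-\alpha$ and initial datum $(1-\alpha)\chi_{\{x\cdot e<0\}}$. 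Theorem \ref{thm:bw} then applies to $v$ (using $d\le 3$ and the class hypothesis on $f$), giving a uniformly bounded-width transition zone for $v$ propagating at a speed controlled by $c^*(e)$ via comparison. Consequently, for any compact $K\subseteq\{x\cdot e>0\}$ one obtains $v(t,x)\to 0$ uniformly on $(c^*(e)e+K)t$ for almost every $\om$, so $u_{\om,e,\alpha}(t,x)\to\alpha$ there, producing the exclusive front-speed property with $\beta_{K,e}(\alpha)=\alpha$ for small $\alpha$, as indicated in Remark~1 after Definition \ref{D.1.5}.

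\textit{Part (ii).} The plan is to use the framework of discontinuous viscosity solutions. Working on the full-measure event on which every $c^*(e)$ is an exclusive front speed, define the half-relaxed limits
\[
\overline{u}(t,x):=\limsups_{\ve\to 0^+,\,(s,y)\to(t,x)} u^\ve(s,y,\om), \qquad \underline{u}(t,x):=\liminfs_{\ve\to 0^+,\,(s,y)\to(t,x)} u^\ve(s,y,\om).
\]
The target is to show $\{\overline u>0\}\subseteq\overline{\Theta^{A,c^*}}$ and $\{\underline u=1\}\supseteq\Theta^{A,c^*}$; combined with $|\partial\Theta^{A,c^*}|=0$ and the standard half-relaxed-limits argument, this yields the locally uniform convergence \eqref{1.30}. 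The key step is the viscosity sub/supersolution inequality for \eqref{eq:hjeq} at each contact point $(t_0,x_0)$ on the boundary of one of these sets, with candidate unit normal $e$. I would rescale from $u^\ve$ back to the unscaled equation \eqref{genhomeq} on a parabolic window of radius $O(1/\ve)$; the rescaled initial datum is then approximately front-like in direction $e$ with a tail of size $\alpha_\ve\to 0$ beyond the hyperplane (arising from the $\psi(\ve)$-bleed permitted by the hypothesis on $u^\ve(0,\cdot)$), and the \emph{exclusive} front speed $c^*(e)$ confines the rescaled solution to a strip around the hyperplane moving at speed $c^*(e)$ with additive error $\beta_{K,e}(\alpha_\ve)\to 0$. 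Undoing the rescaling yields the desired inequality at $(t_0,x_0)$, and uniqueness/comparison for discontinuous viscosity solutions of \eqref{eq:hjeq} from \cite{soravia,bss} then forces $\overline u=\underline u=\chi_{\Theta^{A,c^*}}$ off the null set $\partial\Theta^{A,c^*}$.

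The main obstacle I anticipate is the construction of the local barrier at contact points in part (ii). The \emph{exclusive} (as opposed to plain) front speed is exactly what accommodates the unavoidable $\alpha_\ve$-tail in the rescaled initial datum: a plain front speed would not prevent the tail from driving spurious propagation. Importantly, potential corners of $\partial\Theta^{A,c^*}$ (equivalently, non-convexity of $c^*(y/|y|)|y|$) do not obstruct this approach because the argument proceeds normal-direction-by-normal-direction and uses only single-direction exclusive front speeds, bypassing the convex-envelope approach of Theorem \ref{T.1.4}(iv). The complementary technical point in part (i) is verifying that the shifted reaction $g$ stays in a suitable ignition subclass so that Theorem \ref{thm:bw} applies uniformly for small $\alpha$, and that the front speed of $v$ satisfies $c^*_\alpha(e)\le c^*(e)+o_\alpha(1)$ as $\alpha\to 0$, which is needed to place the propagation of $v$ behind $(c^*(e)e+K)t$.
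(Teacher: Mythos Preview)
Your sketch for part (ii) is essentially the paper's approach: half-relaxed limits, the open sets $\Theta^1,\Theta^2$ on which they equal $1$ and $0$, viscosity super/subsolution arguments for $\chi_{\Theta^1}$ and $\chi_{[(0,\infty)\times\RR^d]\setminus\Theta^2}$ with respect to \eqref{eq:hjeq}, and then comparison/uniqueness from \cite{soravia,bss}. You also correctly identify that exclusivity of the front speed is precisely what absorbs the $\psi(\ve)$-tail in the subsolution step.

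For part (i), however, your route has a genuine gap. After shifting to $v=u_{\om,e,\alpha}-\alpha$ you land on a new ignition equation with reaction $g(x,v,\om)=f(x,v+\alpha,\om)$, and you need its front-like solution to propagate no faster than $c^*(e)+o_\alpha(1)$. But the hypothesis only provides a deterministic front speed for the $f$-equation, not for the $g$-equation. Bounded width for $v$ (even granting that Theorem \ref{thm:bw} applies after adjusting the class parameters and fixing the fact that your characteristic-function initial datum does not satisfy \eqref{eq:icsub}) tells you the transition zone is narrow, not where it sits. No comparison transfers the speed bound from $f$ to $g$, since on part of the range $g\ge f$; and establishing $c^*_\alpha(e)\to c^*(e)$ from scratch would require re-running the Wulff-shape/front-speed machinery for a one-parameter family of reactions and proving a continuity-in-$\alpha$ statement that is not in the hypotheses (the assumption is only that the $f$-equation has a front speed in the single direction $e$).

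The paper avoids this by staying with the $f$-equation. It replaces $u_{\om,e}$ by an equivalent solution $\tilde u_{\om,e}$ whose initial datum satisfies \eqref{eq:icsub}, so that Theorem \ref{thm:bw} yields not only bounded width but also the quantitative lower bound $\partial_t\tilde u_{\om,e}\ge m_\eta>0$ on $\{\eta\le\tilde u_{\om,e}\le 1-\eta\}$, with $m_\eta$ independent of $\om,e$. That estimate is the missing ingredient: for $\alpha\le\theta/2$ it makes
\[
\tilde u_{\om,e}^{+\alpha}(t,x):=\tilde u_{\om,e}\bigl((1+M\alpha m_{\theta/2}^{-1})t,x\bigr)+\alpha
\]
a supersolution of the original $f$-equation, and this function dominates $u_{\om,e,\alpha}(0,\cdot)$ after a finite time shift. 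Since $\tilde u_{\om,e}$ propagates at speed $c^*(e)$ by hypothesis, the supersolution propagates at $(1+M\alpha m_{\theta/2}^{-1})c^*(e)\to c^*(e)$, giving exclusivity with $\beta_{K,e}(\alpha)=\alpha$ for small $\alpha$. In short, the key point you are missing is that Theorem \ref{thm:bw} delivers $u_t\ge m_\eta$, which turns the given front-like solution into a one-parameter family of supersolutions by time-dilation plus vertical shift --- no auxiliary equation and no new front speed are needed.
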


Our results naturally apply to periodic reactions, as these are a special class of stationary ergodic ones (we can then drop $\om$ from the notation and ``deterministic'' from the terminology).  The following result spells out this application, and also shows that the front speeds defined here coincide with the unique/minimal  pulsating front speeds for these reactions.

\begin{thm} \label{T.1.6a}
Let $f:\bbR^d\times[0,1]\to[0,\infty)$ be Lipschitz, periodic in $x\in\bbR^d$, and satisfying $f(\cdot,0)\equiv f(\cdot,1)\equiv 0$.  Assume also that there is $\theta>0$ such that $f$ is non-increasing in $u\in(1-\theta,1)$ for each $x\in\bbR^d$ as well as $f(x,u)>0$ for these $(x,u)$.  Finally, let $\theta'\ge 0$ be the largest number such that $f(\cdot,u)\equiv 0$ for each $u\in[0,\theta']$, and assume that $\sup_{x\in\bbR^d} f(x,u)>0$ for each $u\in(\theta',1)$. 

(i)  The PDE
\begin{equation} \label{1.1}
u_t=\Delta u + f(x,u)
\end{equation}
has a front speed in each direction $e\in\mathbb S^{d-1}$, and that speed equals the minimal pulsating front speed in direction $e$ from \cite{berhamel}.  Therefore the homogenization result Theorem \ref{T.1.4}(iii) (as well as (iv)) holds for \eqref{1.1}.

(ii)  If $\theta'>0$ (i.e., $f$ is an ignition reaction and the minimal speeds from (i) are also unique), then the front speeds from (i) are exclusive.  Therefore the homogenization result Theorem \ref{T.1.6}(ii) holds for \eqref{1.1}.
\end{thm}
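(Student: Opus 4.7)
For part (i), my plan is to invoke the Berestycki-Hamel existence result \cite{berhamel} for pulsating fronts, yielding a minimal pulsating front speed $\bar c(e)$ and a periodic profile $U_e(s, x)$ with $U_e \to 1$ (resp.~$0$) uniformly in $x$ as $s \to -\infty$ (resp.~$s \to +\infty$). I then identify $\bar c(e)$ with the front speed of Definition~\ref{D.1.3} via two comparisons. For the upper bound, the shifted pulsating front $U_e(x \cdot e - \bar c(e) t - s_0, x)$ is an exact solution of \eqref{1.1}; choosing $s_0$ large enough (using uniform convergence $U_e \to 1$ as $s \to -\infty$) makes it dominate the initial data $\tfrac{1+\theta_0}{2}\chi_{\{x\cdot e < 0\}}$ at time zero, so by comparison $u_{\omega,e}(t,x) \leq U_e(x \cdot e - \bar c(e) t - s_0, x)$, and the uniform decay $U_e \to 0$ gives the required smallness on $(\bar c(e) e + K) t$. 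For the lower bound, $u_{\omega,e}(0, \cdot) \geq \tfrac{1+\theta_0}{2}\chi_{B_R(x_0)}$ for any $B_R(x_0) \subseteq \{x \cdot e < 0\}$ with $R$ large enough, and the spreading results of Weinberger \cite{wein} (monostable) and Rossi \cite{rossi} (ignition) yield convergence to $1$ on the expanding Wulff set, whose support function in direction $e$ equals $\bar c(e)$ by the Freidlin-G\"artner formula \eqref{eq:fg}, yielding the bound on $(\bar c(e) e - K) t$.

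For part (ii), I would reduce to part (i) via the substitution $v := u_{\omega, e, \alpha} - \alpha$. Since $\alpha \leq \theta'$ implies $f(x, \alpha) = 0$, the constant $\alpha$ is a stationary solution of \eqref{1.1}, so $u_{\omega, e, \alpha} \geq \alpha$ by comparison. The function $v$ solves the PDE with shifted reaction $\tilde f_\alpha(x, v) := f(x, v + \alpha)$, and after the rescaling $w := v/(1-\alpha) \in [0,1]$ one obtains a new periodic ignition reaction $g_\alpha(x, w) := (1-\alpha)^{-1} f(x, (1-\alpha) w + \alpha)$ with ignition temperature $(\theta'-\alpha)/(1-\alpha) > 0$, satisfying all hypotheses of the theorem. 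Part (i) applied to $g_\alpha$ yields a deterministic front speed $\bar c_\alpha(e)$ for $w$ (with initial data $\chi_{\{x\cdot e < 0\}}$, valid by Remark 1 after Definition~\ref{D.1.3}). The key additional input is continuous dependence $\bar c_\alpha(e) \to \bar c(e)$ as $\alpha \to 0$, which follows from uniqueness of the periodic ignition pulsating front combined with standard compactness/comparison arguments for the associated degenerate elliptic problem on $\mathbb R \times \mathbb T^d$. Given any compact $K \subseteq \{x \cdot e > 0\}$ with $\min_K(k \cdot e) = 2\delta > 0$, choose $\alpha_0 = \alpha_0(K,e)$ so that $|\bar c_\alpha(e) - \bar c(e)| < \delta$ for $\alpha \leq \alpha_0$; then $(\bar c(e) e + K) t \subseteq (\bar c_\alpha(e) e + K') t$ for the compact set $K' := K + (\bar c(e) - \bar c_\alpha(e)) e \subseteq \{x \cdot e > 0\}$, whence part (i) gives $w(t, x) \to 0$ uniformly on this region, so $v \to 0$ and $u_{\omega,e,\alpha}(t,x) \to \alpha$. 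Choosing $\beta_{K, e}(\alpha) := \alpha$ for $\alpha \leq \alpha_0$ (and any bounded extension for $\alpha > \alpha_0$) then suffices, proving exclusivity. The homogenization claims in both parts follow immediately from Theorems~\ref{T.1.4}(iii)--(iv) and \ref{T.1.6}(ii).

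The main obstacle is the continuity $\bar c_\alpha(e) \to \bar c(e)$ as $\alpha \to 0$, which amounts to stability of the minimal pulsating front speed under a reaction perturbation that preserves the periodic ignition structure. For ignition reactions this is made tractable by the uniqueness of the pulsating front (noted in the theorem statement), which gives a clean variational/elliptic characterization of $\bar c(e)$ and allows one to pass to the limit in the equation satisfied by $U_{g_\alpha}$ using the Berestycki-Hamel framework \cite{berhamel}. A secondary technical point is the uniformity in $x$ and the exponential decay of $U_e(s, x)$ as $s \to \pm\infty$, needed for the quantitative comparison bounds in part (i); these are standard consequences of periodic elliptic regularity and the structure of the pulsating front equation in the regions where $f$ vanishes.
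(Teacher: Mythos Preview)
Your lower bound in part (i) has a genuine gap. You claim that the support function of the Wulff shape $\mathcal{S}$ in direction $e$ equals the minimal pulsating front speed $\bar c(e)$, citing the Freidlin--G\"artner formula \eqref{eq:fg}. But \eqref{eq:fg} only yields $\sup_{e'} w(e')\,e'\cdot e \le \bar c(e)$; equality is the \emph{inverse} Freidlin--G\"artner formula \eqref{eq:fsdef}, which the paper explicitly notes can fail for periodic ignition reactions (the Caffarelli--Lee--Mellet example \cite{cafleemel}, where $\bar c(p/|p|)|p|$ is non-convex and $\mathcal{S}$ has corners). Writing $c_{\mathcal S}(e):=\sup_{y\in\mathcal S} y\cdot e$, your Wulff-shape comparison only shows convergence to $1$ on $\{x\cdot e < (1-\delta)c_{\mathcal S}(e)t\}$; when $c_{\mathcal S}(e) < \bar c(e)$ this does not cover $(\bar c(e)e - K)t$ for compact $K$ close to the origin, so the upper and lower bounds fail to match and the front speed is not identified.

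The paper avoids this by using the pulsating front itself as the lower barrier. Since $U_s<0$, one has $m_\eta := \inf\{u_t : u\in[\eta,1-\eta]\}>0$, and for small $\alpha$ the perturbation $u^{-\alpha}(t,x):=U\bigl(x\cdot e-(1-M\alpha m_\eta^{-1})\bar c(e)\,t,\,x\bigr)-\alpha$ is a \emph{subsolution} of \eqref{1.1} (using that $f$ is non-increasing in $u$ near both endpoints). It sits below $(1-\alpha)\chi_{\{x\cdot e<0\}}$ at some negative time and propagates at speed $(1-M\alpha m_\eta^{-1})\bar c(e)\to\bar c(e)$, giving the matching lower bound directly. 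For non-ignition $f$ (where $f$ need not vanish near $u=0$, so the $-\alpha$ shift is problematic), the paper approximates $f$ from below by ignition reactions $f_n\le f$ with pulsating front speeds $c_n\nearrow\bar c(e)$ and uses the corresponding $u^{n,-\alpha}$ as subsolutions of \eqref{1.1}.

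Your approach to (ii) via the change of variables $w=(u_{\omega,e,\alpha}-\alpha)/(1-\alpha)$ is different from the paper's and, assuming (i) is repaired, would work provided you establish the continuity $\bar c_\alpha(e)\to\bar c(e)$; you acknowledge this is the main obstacle but do not actually prove it. The paper's route is shorter and needs no such stability result: the same $m_\eta>0$ mechanism makes $u^{+\alpha}(t,x):=U\bigl(x\cdot e-(1+M\alpha m_\eta^{-1})\bar c(e)\,t,\,x\bigr)+\alpha$ a supersolution dominating $\chi_{\{x\cdot e<0\}}+\alpha$ at some positive time, which immediately yields exclusivity with $\beta_{K,e}(\alpha)=\alpha$ for small $\alpha$.
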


{\it Remarks.}  1.  The hypotheses on $f$ are those from \cite{berhamel}, and while \cite{berhamel} requires monostable reactions (i.e., those with $\theta'=0$) to be $C^{1,\delta}$ in $u$, this is only used in the study of pulsating fronts with speeds strictly greater than the minimal speed (see Section 6 in \cite{berhamel}).  
\smallskip

2.  The results in \cite{berhamel} apply to general spatially periodic second order operators in place of $\Delta$ (satisfying standard ellipticity hypotheses, as well as the first-order term being divergence-free and mean-zero),  which turn \eqref{1.1} into a form captured by \eqref{1.10}.  Theorem \ref{T.1.6a} and its proof immediately extend to this case (with the versions of Theorems \ref{T.1.4} and \ref{T.1.6} from Theorem \ref{T.1.8} below).  
\smallskip

3.  The homogenization claim in  (i) is a stronger version of the result of Alfaro and Giletti \cite{alfarogiletti}, who require smooth $\partial A$ and $\theta'=0$, as well as slightly stronger hypotheses on $f$.  
\smallskip

Seeing the usefulness of Wulff shapes without corners in the study of front speeds and homogenization for \eqref{genhomeq}, it is natural to ask when a Wulff shape for \eqref{genhomeq} has no corners.  As mentioned in the introduction, a result from \cite{cafleemel} shows that Wulff shapes can have corners.  However, one simple case when this does not happen is when $f$ is {\it isotropic}.  That is, its statistics are invariant under rotations --- and hence its Wulff shape must be a ball, if it exists.

\begin{definition} \label{D.1.7}
Let $SO(d)$ be the group of rotation matrices on $\RR^{d}$. 
We say that $f$ from \eqref{genhomeq}
is {\it isotropic} if there is a group of measure-preserving transformations $\left\{\sigma_{\mscr{R}} :\Om\rightarrow \Om\right\}_{\mscr{R}\in SO(d)}$ such that
\begin{equation*}
\sigma_{\mscr{R}}\circ \sigma_{\mscr{P}}=\sigma_{\mscr{R}\mscr{P}}
\end{equation*}
for all $\mscr{R},\mscr{P}\in SO(d)$, and 
\begin{equation*}
f(x, u, \sigma_{\mscr{R}}\om) = f(\mscr{R}x, u, \om)
\end{equation*}
for each $(\mscr{R},x,u,\om)\in SO(d)\times\bbR^d\times[0,1]\times\Om$.
\end{definition}

A natural example of isotropic reactions are perturbations of homogeneous reactions by radially symmetric impurities randomly distributed according to a Poisson point process.  This is a special case of the following example.
 
\begin{example}\label{ex:ppp}
Let $\left\{x_{n,\om}\right\}_{n\in \mathbb{N}}\subseteq \RR^{d}$ be a  Poisson point process on $\bbR^d$ (with $\om\in\Om$ as above) and let $\{\mscr R_{n,\om}\}_{n\in \mathbb{N}}\subseteq SO(d)$ be a sequence of rotations on $\bbR^d$, each with uniform distribution. If $\left\{x_{n,\om}\right\}_{n\in \mathbb{N}}$ and $\{\mscr R_{n,\om}\}_{n\in \mathbb{N}}$ are independent and
\[
f(x,u, \om):= g(\{\mscr R_{n,\om}(x-x_{n,\om})\}_{n\in\mathbb N},u)
\]
for some function $g$, then $f$ is isotropic.  For instance, we could take $f(x,u, \om):= g(\inf_{n} |x-x_{n,\om}|,u)$ for some Lipschitz $g:[0,\infty)\times[0,1]\to \bbR$ with $g(y,0)=g(y,1)=0$ and  $g(y,\cdot)\ge f_0$ for each $y\ge 0$.

\end{example}

The above results, applied to isotropic reactions, and Lemma \ref{L.3.4} below now yield the following corollary.

\begin{cor} \label{C.1.7}
Assume \eqref{h:stat0}--\eqref{h:erg}, $d\le 3$, $f\in \mathcal{F}(f_0,M,\zeta,\xi)$ for some $\zeta<c_0^2/4$ and $\xi>0$, and that $f$ is isotropic.  Let $A\subseteq\RR^d$ be open.

(i) \eqref{genhomeq} has a  deterministic Wulff shape $\mathcal S=B_w(0)$ for some  $w>0$, and a deterministic front speed $c^*(e)=w$ in each direction $e\in\mathbb S^{d-1}$.  In particular, $\Theta^{A, c^{*}}=\Theta^{A, B_{w}(0)}$.

(ii) The claim in Theorem \ref{T.1.4}(iii) holds.

(iii)  If $f$ is an ignition reaction, then the deterministic front speeds are all exclusive and the claim in Theorem \ref{T.1.6}(ii)
holds.
\end{cor}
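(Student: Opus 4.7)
The plan is to combine the hypotheses with Theorems \ref{T.1.4} and \ref{T.1.6} and to extract the isotropy (ball-shape) of the Wulff shape from Lemma \ref{L.3.4}. First, since $d\le 3$ and $f\in\mathcal{F}(f_0,M,\zeta,\xi)$, Theorem \ref{T.1.4}(i) applies and yields a deterministic Wulff shape $\mathcal S$ associated with some continuous spreading-speed function $w:\mathbb S^{d-1}\to(0,\infty)$. The task then splits into: (a) showing $w$ is constant (so $\mathcal S$ is a ball), (b) upgrading to front speeds in every direction, and (c) running the homogenization machinery.

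For (a), the goal is to use the rotation invariance of the law of $f$. If $u_\om$ denotes the solution to \eqref{genhomeq} from the initial datum \eqref{1.21}, then for each $\mscr R\in SO(d)$ the function $v(t,x):=u_\om(t,\mscr R x)$ satisfies $v_t=\Delta v+f(\mscr R x,v,\om)=\Delta v+f(x,v,\sigma_{\mscr R}\om)$ by isotropy, and its initial datum is unchanged because $B_{R_0}(0)$ is rotation invariant. Hence $u_\om(t,\mscr Rx)=u_{\sigma_{\mscr R}\om}(t,x)$ pointwise. Applying this on a full-measure set $\Om_0$ on which the Wulff shape is $\mathcal S$ (and intersecting with its $\sigma_{\mscr R}^{-1}$-preimages over a countable dense set of rotations, which is permissible because $\sigma_{\mscr R}$ is measure-preserving), identities \eqref{eq:lbsxi'}--\eqref{eq:ubsxi'} for $u_{\sigma_{\mscr R}\om}$ translate into the same identities for $u_\om$ with $\mathcal S$ replaced by $\mscr R^{-1}\mathcal S$. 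The uniqueness of the Wulff shape as the asymptotic spreading set of $u_\om$, which is the content of Lemma \ref{L.3.4}, then forces $\mscr R^{-1}\mathcal S=\mathcal S$ for every rotation $\mscr R$. Thus $\mathcal S=B_w(0)$ for some $w>0$, and $w(e)\equiv w$ in \eqref{eq:wulffdef}.

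Since $\mathcal S=B_w(0)$ is strictly convex, every $e\in\mathbb S^{d-1}$ is a unit outer normal to $\partial \mathcal S$, so Theorem \ref{T.1.4}(ii) applies in every direction and gives a deterministic front speed
\[
c^*(e)=\sup_{e'\in\mathbb S^{d-1}} w(e')\,e'\cdot e = w\sup_{e'\in\mathbb S^{d-1}}e'\cdot e = w,
\]
proving (i); the equality $\Theta^{A,c^*}=\Theta^{A,B_w(0)}$ then follows from the final identity of Theorem \ref{T.1.4}(iv) because the radial function $c^*(\tfrac y{|y|})|y|=w|y|$ is convex. Part (ii) is then immediate from Theorem \ref{T.1.4}(iii), since its hypothesis (deterministic front speed in every direction) has just been verified.

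For (iii), with $f$ an ignition reaction in $\mathcal F(f_0,M,\zeta,\xi)$, $d\le 3$, and deterministic front speeds established in every direction, Theorem \ref{T.1.6}(i) upgrades each $c^*(e)$ to a deterministic \emph{exclusive} front speed. Theorem \ref{T.1.6}(ii) then yields the general-$A$ homogenization conclusion. The only step that carries any real content beyond invoking previous results is the rotation-invariance argument for $\mathcal S$, and its sole delicate point is ensuring that the measure-preserving action $\sigma_{\mscr R}$ preserves the full-measure set on which the deterministic Wulff shape exists uniformly for all rotations; this is handled by working with a countable dense subgroup of $SO(d)$ and using continuity of the map $e\mapsto w(e)$ from Definition \ref{D.1.2}.
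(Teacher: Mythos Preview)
Your proof is correct and follows the same overall route as the paper: invoke Theorem \ref{T.1.4}(i) for the Wulff shape, establish $\mathcal S=B_w(0)$ via isotropy, apply Theorem \ref{T.1.4}(ii,iv) for front speeds and $\Theta^{A,c^*}=\Theta^{A,\mathcal S}$, then Theorems \ref{T.1.4}(iii) and \ref{T.1.6}(i,ii) for the homogenization claims.

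The one place where you diverge slightly is the isotropy step. You argue directly on the solutions $u_\om$, using $u_\om(t,\mscr Rx)=u_{\sigma_{\mscr R}\om}(t,x)$ and the fact that the deterministic Wulff shape is unique (whence $\mscr R^{-1}\mathcal S=\mathcal S$). The paper instead packages this into Lemma \ref{L.3.4}, whose proof works at the level of the first-passage times $\tau$: it shows $\overline\tau(e)=\overline\tau(e')$ directly from $\tau(0,nRe',\om)=\tau(0,ne',\sigma_R\om)$. Both arguments are short and equivalent in spirit. Note, however, that your sentence ``the uniqueness of the Wulff shape \dots, which is the content of Lemma \ref{L.3.4}'' miscites the lemma: Lemma \ref{L.3.4} is not a uniqueness statement but precisely the isotropy computation you are reproducing. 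Uniqueness of the Wulff shape for a given $\om$ is immediate from Definition \ref{D.1.2} (any two sets satisfying \eqref{eq:lbsxi'}--\eqref{eq:ubsxi'} for all $\delta>0$ must coincide), so you can simply drop the reference to Lemma \ref{L.3.4} there.
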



\subsection{Generalization to \eqref{1.10}}\label{S.1.3}
The above results in fact apply to the more general model \eqref{1.10}, which includes, for instance, some (viscous) Hamilton-Jacobi equations with possibly non-convex or non-coercive Hamiltonians.
The
$\ve$-space-time-scaled version of \eqref{1.10} is
\begin{equation} \label{1.10e}
u^{\ve}_{t} = \ve^{-1} F\left(\ve^2 D^2 u,\ve\nabla u,u, \ve^{-1} x, \om \right)  \qquad\text{on  $(0,\infty)\times\bbR^d$.}
\end{equation}
We consider the case when \eqref{1.10} models phase transitions, with solutions transitioning between two equilibria $u^-<u^+$.  After an appropriate transformation these can be assumed to be $u^-\equiv 0$ and $u^+\equiv 1$.

Definitions \ref{D.1.3} and \ref{D.1.5} above extend naturally to \eqref{1.10}, and this form also allows \eqref{genhomeq} with general second order linear operators and general reactions (including of bistable and mixed types) as well as more general first- and second-order terms.  The basic hypothesis in this setting will be as follows.

\medskip

{\bf Hypothesis H.}  (i) Let \eqref{1.10} have a unique solution in some class of functions $\mathcal A\subseteq L^1_{\rm loc}((0,\infty)\times\bbR^d)$ for each $\om\in\Om$ and each  locally BV initial condition $0\le u(0,\cdot,\om)\le 1$, with this solution being constant 0 resp.~1 when $u(0,\cdot,\om)\equiv 0$ resp.~$1$.  Assume also that left time-shifts of solutions (restricted to $(0,\infty)\times\bbR^d$) are solutions from $\mathcal A$ and that \eqref{1.10e} satisfies the (parabolic) comparison principle within $\mathcal A$.

(ii) Lemma \ref{L.4.2} below holds for solutions to \eqref{1.10} from $\mathcal A$, and there are $\theta_0<1$ and $R_0<\infty$ such that solutions $u_\om$ to \eqref{1.10} with $u_\om(0,\cdot)=\theta_0\chi_{B_{R_0}(0)}$ 
satisfy locally uniformly in $x\in\bbR^d$,
\[
\lim_{t\to\infty}\inf_{\om\in\Om} u_\om(t,x)=1.
\] 

(iii) The analogs of \eqref{h:stat} and \eqref{h:erg} for $F$ hold. 
\medskip

Lemmas \ref{lem:slowspread} and \ref{L.4.2} below show that Hypothesis H holds for \eqref{genhomeq},  with $\theta_0$ in (ii) being any number {\it strictly greater} than $\theta_0$ from \eqref{h:stat0}
 (e.g.,  $\frac{1+\theta_0}2$ as in  \eqref{1.21}),  
 and with, for instance, $\mathcal A=\bigcap_{\tau>0} C^{1+\ga, 2+\ga}((\tau,\infty)\times\bbR^d)$.
(We choose here this notation for the sake of simplicity of presentation, as the above results then generalize verbatim.  Then we can also equivalently state Definitions \ref{D.1.2} and \ref{D.1.3} with any number from $[\theta_0,1)$ in place of $\frac{1+\theta_0}2$, via the argument in the proof of Theorem \ref{T.1.6}(i) and at the start of Section \ref{sec:sspeed} below, and we will let this be $\theta_0$ for the sake of simplicity.)   The analysis for \eqref{1.10} below therefore also applies to \eqref{genhomeq}. Nevertheless, in it we will consider initial data with value $\ge \theta_0+\alpha$ (rather than $\ge \theta_0$) on some sets, with $\alpha>0$ so that our arguments also directly apply to \eqref{genhomeq} with $\theta_0$ from \eqref{h:stat0}. 

We  note that considering only continuous initial conditions would suffice, and we include locally BV ones only for notational convenience, so that we can use initial conditions that are (multiples of) characteristic functions of sets.  Also, (ii) in fact shows that spreading with some positive minimal speed $c_0>0$ holds for \eqref{1.10}, in the sense of \eqref{2.1} below.

Finally for $u:[0,\infty)\times\bbR^d\to [0,1]$ and $\eta\in \left(0, \frac{1}{2}\right)$, we define the \emph{width of the transition zone} of $u$ from $\eta$ to $1-\eta$ at time $t\ge 0$ to be 
\begin{equation}\label{def:tzwidth}
L_{u, \eta}(t):=\inf\left\{L>0 \mid \left\{x\,\mid\, u(t, x)\geq \eta\right\}\subseteq B_{L}\left(\left\{x\,\mid\, u(t, x)\geq 1-\eta \right\}^0_{1/\eta}\right)\right\}.
\end{equation}
This is also the Hausdorff distance of $\left\{u(t, \cdot)\geq 1-\eta\right\}^0_{1/\eta}$ and $\left\{u(t, \cdot)\leq \eta\right\}$, and it is the smallest $L$ such that if $u(t,x)\ge\eta$, then $B_{L+\eta^{-1}}(x)$ contains a ball of radius $\eta^{-1}$ on which $u(t,\cdot)$ is no less than $1-\eta$.  It is clear that for \eqref{1.10} to have a Wulff shape, it is necessary that $\lim_{t\to\infty} \frac 1t {L_{u, \eta}(t)}=0$ for any $\eta\in \left(0, \frac{1}{2}\right)$, with $u$ the solution from H(ii) above (for almost all $\om\in\Om$).  It turns out that this hypothesis is also sufficient, as part (i) of the following extension of some of the above results to solutions of \eqref{1.10} from the class $\mathcal A$ shows.

\begin{thm} \label{T.1.8}
Assume Hypothesis H.

(i)  If  for each $\eta>0$ and almost all $\om\in\Om$ we have  $\lim_{t\to\infty} \frac 1t {L_{u_\om, \eta}(t)}=0$ for the solution from H(ii), 
then \eqref{1.10} has a  deterministic Wulff shape.  If $F$ is also isotropic (i.e., an analog of Definition \ref{D.1.7} holds, with $F(X,p,u,x, \sigma_{\mscr R} \om)= F(\mscr R X \mscr R^T, \mscr R p, u,\mscr R x,\om)$), then Corollary \ref{C.1.7}(i) holds for \eqref{1.10}  in place of \eqref{genhomeq}.

(ii)  Theorem \ref{T.1.4}(ii--iv) and Theorem \ref{T.1.6}(ii) hold for \eqref{1.10} and \eqref{1.10e} in place of \eqref{genhomeq} and \eqref{eq:homeq}.
\end{thm}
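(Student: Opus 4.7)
The plan is to verify that every step in the proofs of Theorems \ref{T.1.4}(ii--iv), \ref{T.1.6}(ii), and the Wulff-shape construction used in Theorem \ref{T.1.4}(i) only requires the abstract ingredients packaged by Hypothesis H. These are: (a) existence, uniqueness and time-translation within $\mathcal{A}$, together with the parabolic comparison principle (from H(i)); (b) a positive minimal spreading speed starting from $\theta_0\chi_{B_{R_0}(0)}$ and a finite maximal propagation speed (from H(ii) and Lemma \ref{L.4.2}); (c) stationarity and ergodicity of $F$ (from H(iii)); and, for (i) only, (d) a sub-ballistic bound on the transition-zone width. Since \eqref{genhomeq} with $d\le 3$ and $f\in\mathcal F(f_0,M,\zeta,\xi)$ satisfies all four automatically via Theorem \ref{thm:bw}, replacing the bounded-width conclusion of Theorem \ref{thm:bw} by the hypothesis (d) lets the Section \ref{sec:sspeed} argument go through in this more abstract setting.

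For part (i), the approach is the subadditive ergodic theorem (Theorem \ref{thm:set}) applied to a first-passage functional. For each $e\in\mathbb S^{d-1}$ and $r>0$, define
\[
\tau_e(r,\om) := \inf\{ t \ge 0 \mid u_\om(t,\cdot) \ge \theta_0 \text{ on } B_{R_0}(re) \},
\]
where $u_\om$ is the solution from H(ii). Once $u_\om(\tau_e(r,\om),\cdot)\ge\theta_0$ on $B_{R_0}(re)$, comparison places it above a $\mscr T_{re}$-translated copy of the initial datum there; by stationarity the subsequent evolution is governed by $F(\cdot,\cdot,\cdot,\cdot,\mscr T_{re}\om)$, giving the almost-subadditivity
\[
\tau_e(r+r',\om) \le \tau_e(r,\om) + \tau_e(r',\mscr T_{re}\om) + E_e(r,\om),
\]
where $E_e(r,\om) = O(L_{u_\om,\eta}(\tau_e(r,\om)))$ is the overhead needed to re-establish a bump of the required height on $B_{R_0}(re)$ from a thin transition zone. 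Because (b) pins $\tau_e(r,\om)$ between two linear functions of $r$, the hypothesis (d) forces $E_e(r,\om)=o(r)$ almost surely. Akcoglu--Krengel then yields a deterministic limit $\tau_e(r,\om)/r\to 1/w(e)\in(0,\infty)$, and continuity plus convexity of $w$ (equivalently of $\mathcal S$) follow from sub-additivity of $1/w$ in $e$, exactly as in Section \ref{sec:sspeed}. Isotropy of $F$ gives distributional rotational invariance of the whole setup and forces $w$ to be constant, so $\mathcal S = B_w(0)$, giving the corresponding portion of Corollary \ref{C.1.7}(i).

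For part (ii), Theorem \ref{T.1.4}(ii) is proved by implementing rigorously the geometric argument in the introduction: at a boundary point $ty$ of $t\mathcal S$ with outer normal $e$, the solution from compactly supported data is within $o(t)$ of $\chi_{\{x\cdot e<ty\cdot e\}}$ on an $o(t)$-neighborhood, and comparing it with a shifted front-like solution $u_{\om,e}$ for a time interval of size $O(t)$ yields the inverse Freidlin-G\"artner formula $c^*(e)=\sup_{e'}w(e')e'\cdot e$; only (a)--(c) are used. For the homogenization statements \ref{T.1.4}(iii--iv) and \ref{T.1.6}(ii), one follows Section \ref{s:fstohom} to show that the upper and lower half-relaxed limits of $u^\ve(\cdot,\cdot,\om)$ are respectively a viscosity sub- and super-solution to the Hamilton-Jacobi PDE \eqref{eq:hjeq}, by sandwiching $u^\ve$ locally between appropriately shifted copies of the front-like solutions whose asymptotic speeds are the deterministic (exclusive in the case of Theorem \ref{T.1.6}(ii)) front speeds. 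The uniqueness results of Soravia and Barles--Soner--Souganidis for discontinuous viscosity solutions of \eqref{eq:hjeq} then force the two half-relaxed limits to coincide with $\chi_{\Theta^{A,c^*}}$, which is \eqref{1.30}. Every comparison performed here again only uses (a)--(c).

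The main obstacle is that in the reaction-diffusion case the overhead $E_e(r,\om)$ above was $O(1)$ thanks to bounded-width (Theorem \ref{thm:bw}), whereas here one only has $E_e(r,\om)=o(r)$. The key is to design the first-passage functional so that the restart overhead enters strictly through $L_{u_\om,\eta}$ evaluated at the concatenation time; once this is arranged, the sub-ballistic assumption makes the error negligible after dividing by $r$, and the Akcoglu--Krengel conclusion and subsequent construction of $\mathcal S$ proceed as in Section \ref{sec:sspeed}. The remainder of the proof is a bookkeeping exercise to check that the arguments of Sections \ref{s:fs}--\ref{s:fstohom} use only (a)--(c) and therefore transfer verbatim from \eqref{genhomeq}, \eqref{eq:homeq} to \eqref{1.10}, \eqref{1.10e}.
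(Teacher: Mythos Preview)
Your treatment of part (ii) is essentially correct and matches the paper's approach: Sections \ref{s:fs} and \ref{s:fstohom} are written directly under the abstract Hypotheses H$'$ and H$''$, which package exactly the ingredients (a)--(c) you list, so the transfer from \eqref{genhomeq} to \eqref{1.10} is indeed a bookkeeping check.

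However, your argument for part (i) misidentifies where the sub-ballistic width hypothesis is actually used, and this leaves a real gap. The subadditivity of the first-passage functional is \emph{exact}: by your own definition, at time $\tau_e(r,\om)$ the solution $u_\om$ already satisfies $u_\om(\tau_e(r,\om),\cdot)\ge\theta_0\chi_{B_{R_0}(re)}$, which is precisely the (translated) initial datum. Comparison then gives
\[
\tau_e(r+r',\om)\le \tau_e(r,\om)+\tau_e(r',\mscr T_{re}\om)
\]
with no error term $E_e$ whatsoever (this is Lemma \ref{lem:tauprop}(i) in the paper). Kingman's theorem thus yields the deterministic limit $\overline\tau(e)=1/w(e)$ using only (a)--(c); the width assumption plays no role here.

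The width hypothesis enters at a different, and more essential, point that your proposal does not address: upgrading the directional limits $w(e)$ to the full Wulff-shape statement \eqref{eq:ubsxi'}. Existence of $w(e)$ for each $e$ gives the lower inclusion $(1-\delta)\mathcal S t\subseteq\{u_\om(t,\cdot)\ge 1-\eta\}$, but it does \emph{not} rule out $u_\om(t,\cdot)\ge\eta$ at points far outside $(1+\delta)\mathcal S t$. The paper's argument (see the second half of the proof of Theorem \ref{T.1.8}(i) in Section \ref{sec:sspeed}) runs as follows: if $u_\om(s,x)\ge\eta$ at some ``too distant'' $x$, then the bound $L_{u_\om,\eta}(s)=o(s)$ furnishes a nearby ball on which $u_\om(s,\cdot)\ge 1-\eta\ge\theta_0$; restarting from that ball via Lemma \ref{L.3.11} contradicts the already-established value of $\overline\tau(e)$ in the relevant direction. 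Without the width bound there is no mechanism to promote a single value $u_\om(s,x)\ge\eta$ to a full $\theta_0$-bump, and the upper bound on the Wulff shape fails. Your proposal needs to relocate the use of (d) from the (nonexistent) subadditivity error to this step.
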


The limitation to $d\le 3$ in Theorems \ref{T.1.4}(i) and \ref{T.1.6}(i) is due to the need in their proofs for Theorem \ref{thm:bw} below, which guarantees that they satisfy the hypothesis in Theorem \ref{T.1.8}(i).   In fact, Theorem \ref{thm:bw} guarantees more: uniform-in-$\om$ boundedness of $\sup_{t\ge T_\eta} L_{u_\om, \eta}(t)$ for each $\eta>0$ and some $T_\eta>0$.  (This also holds for some other reactions $f$, see Theorem 2.7 in \cite{andrejbd} and  Remark 2 after it.)  While Theorem \ref{thm:bw} does not hold for $d\ge 4$ \cite{andrejbd}, the construction of counterexamples in \cite{andrejbd} for which $\limsup_{t\to\infty} \frac 1t {L_{u_\om, \eta}(t)}>0$ is based on properties of reactions that occur with probability zero in the stationary ergodic setting.  It is therefore quite plausible that Theorems  \ref{T.1.4}(i) and \ref{T.1.6}(i) extend to all dimensions.  The former would immediately follow from  Theorem \ref{T.1.8}(i) and the proof of the following conjecture.

\begin{conjecture} \label{C.1.12}
Assume the hypotheses of Theorem \ref{T.1.4}(i), except for the limitation on $d$.  Then the solutions from Definition \ref{D.1.2} satisfy $\lim_{t\to\infty} \frac 1t {L_{u_\om, \eta}(t)}=0$ for each $\eta>0$ and almost all $\om\in\Om$.
 \end{conjecture}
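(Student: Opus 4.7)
The approach is to combine the pushed character of reactions in $\mathcal{F}(f_0,M,\zeta,\xi)$ with stationarity and ergodicity of $f$ to rule out large slow patches of intermediate values within the transition zone. A first step is to localize: standard parabolic comparison together with Lemma \ref{L.4.2} ensures that for every $\eta\in(0,\tfrac12)$ and almost every $\omega$, the set $\{x:u_\omega(t,x)\ge\eta\}$ lies inside $B_{c_{\max}t}(0)$, while $u_\omega(t,\cdot)\ge \theta_0+\alpha$ on $B_{c_0 t}(0)$ for some small $\alpha>0$ and all large $t$. Thus $L_{u_\omega,\eta}(t)$ can only be generated by the annular region $B_{c_{\max}t}(0)\setminus B_{c_0 t}(0)$, and the proposed route is to argue that this annulus cannot contain intermediate values spread over a distance of order $t$.

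The second step is a dimension-free local invasion principle. Since $\zeta<c_0^2/4$, the definition of $\mathcal{F}(f_0,M,\zeta,\xi)$ and comparison with the deterministic equation $v_t=\Delta v+f_0(v)$ yield the following: if $u_\omega(t_0,\cdot)\ge \theta_0+\alpha$ on a ball $B_{\rho_\alpha}(x_0)$ of sufficiently large radius $\rho_\alpha$, then $u_\omega(t_0+T_\eta,\cdot)\ge 1-\eta$ on $B_{\rho_\alpha/2}(x_0)$, with $T_\eta$ depending only on $\eta,\alpha,f_0$. This implication is dimension-independent and does not invoke Theorem \ref{thm:bw}. It reduces the conjecture to showing that every point $x$ with $u_\omega(t,x)\ge\eta$ in the transition annulus lies within distance $o(t)$ of some $x'$ with $u_\omega(t-T_\eta,\cdot)\ge \theta_0+\alpha$ throughout $B_{\rho_\alpha}(x')$.

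The final, and hardest, step is to rule out sparse intermediate-valued regions by exploiting ergodicity. The $d\ge 4$ counterexamples of \cite{andrejbd} are built from carefully placed obstructing patterns in $f$ that allow intermediate values to leak into regions of size comparable to $t$; any such configuration requires coincidental features of $f$ that cannot persist throughout a full-probability event by stationarity. I would attempt a quantitative version of the bounded-width argument of \cite{andrejbd}: show that a transition annulus of width $\ge\delta t$ at time $t$ forces $f$ to contain an obstructing patch of comparable diameter somewhere in $B_{c_{\max}t}(0)$, and then use a Borel--Cantelli style argument based on the multiparameter ergodic theorem to conclude that such patches have asymptotic density zero and hence cannot reappear arbitrarily often, almost surely. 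The main obstacle is formulating ``obstructing patch'' precisely enough to be both implied by an $\Omega(t)$-wide transition zone and sufficiently rare under the stationary ergodic law; this will likely require a new local structural characterization of transition zones for \eqref{genhomeq} that is valid in all dimensions.
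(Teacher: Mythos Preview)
This statement is labeled a \emph{conjecture} in the paper, and the paper provides no proof of it; it is explicitly presented as open.  So there is no ``paper's own proof'' to compare against.  What you have written is likewise not a proof but a strategy outline, and you correctly flag this yourself in the final paragraph.

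Your first two steps are fine and are essentially already in the paper (Lemmas~\ref{lem:slowspread} and~\ref{L.4.2}); they are dimension-independent and do localize the problem to an annulus.  However, the ``reduction'' in your second step does not genuinely simplify the problem: asking that every $x$ with $u_\omega(t,x)\ge\eta$ be within $o(t)$ of an $x'$ with $u_\omega(t-T_\eta,\cdot)\ge\theta_0+\alpha$ on $B_{\rho_\alpha}(x')$ is essentially a time-shifted restatement of the sublinear-width condition itself, not a consequence of anything you have established.

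The third step is where the entire content lies, and you do not carry it out.  The paper already observes (just before the conjecture) that the known $d\ge 4$ counterexamples from \cite{andrejbd} rely on configurations that occur with probability zero in the stationary ergodic setting, which is consistent with your intuition.  But turning this into a proof requires precisely the ``new local structural characterization'' you say you would need --- and that is the open problem.  A Borel--Cantelli argument on ``obstructing patches'' is plausible-sounding, but without a concrete definition of such a patch that is (a) forced by an $\Omega(t)$-wide transition zone and (b) has controllable probability, there is no argument here.  In short: your outline identifies the right difficulty, but does not resolve it, and neither does the paper.
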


Finally, we highlight one more result, which is of independent interest.  

\begin{thm}  \label{T.1.9}
Assume Hypothesis H
and let $\mathcal T$ (resp. $\tilde{\mathcal T}$) be the set of all directions $e\in\mathbb S^{d-1}$ for which \eqref{1.10}  has a deterministic front speed (resp. deterministic exclusive front speed) $c^*(e)$.  

(i) $\mathcal T$ and $\tilde{\mathcal T}$ are closed and $c^*$ is positive, bounded by $c'$ from Lemma~\ref{L.4.2}, and Lipschitz on $\mathcal T$.  Moreover, there is $\Om_0\subseteq\Om$ with $\mathbb P[\Om_0]=1$ such that \eqref{1.10} with any fixed $\om\in\Om_0$ has front speed $c^*(e)$ in any direction $e\in\mathcal T$ and exclusive front speed $c^*(e)$ in any direction  $e\in\tilde{\mathcal T}$.  

(ii)  If \eqref{1.10} has a deterministic Wulff shape given by \eqref{eq:wulffdef}, then for each $e\in\mathbb S^{d-1}$,
\begin{equation}\label{1.40}
w(e)=\inf_{e'\in\mathcal T \,\&\, e'\cdot e>0} \frac{c^{*}(e')}{e'\cdot e}.
\end{equation}
\end{thm}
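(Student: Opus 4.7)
\emph{Part (i), Lipschitz bound.} Positivity $c^*(e) \ge c_0 > 0$ follows from Hypothesis H(ii) and the lower spreading bound of Lemma \ref{lem:slowspread}, while $c^*(e) \le c'$ is the upper spreading bound of Lemma \ref{L.4.2}. For Lipschitz continuity of $c^*$ on $\mathcal T$, the plan is to fix $e, e' \in \mathcal T$ and $t$ large, set $R := c' t + M'$ and $s := R|e - e'|$, and observe that on $B_R(c^*(e)te)$ one has $y\cdot e < 0 \implies y\cdot e' \le R|e-e'| = s$. Hence the initial datum of $u_{\om, e}$ is pointwise dominated there by a direction-$e'$ half-space front shifted by $s$ in direction $e'$. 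The finite propagation speed $c'$ localizes the comparison, and the comparison principle within $\mathcal A$ (Hypothesis H(i)) transfers it to time $t$, giving $u_{\om,e}(t,z) \le u_{\mscr T_{se'}\om,\, e'}(t, z - se')$ up to a negligible error. The main subtlety is that \eqref{1.10} is stationary but not translation-invariant in $x$, so a spatial shift of a solution is again a solution of \eqref{1.10} but with $\om$ shifted via $\mscr T$; this is resolved using \eqref{h:stat}, which makes the full-measure set $\Om_0(e')$ carrying the deterministic front speed $\mscr T_y$-invariant, so the shifted solution retains the speed $c^*(e')$. Tracking the position of the $\tfrac{1+\theta_0}{4}$-level set of both sides at $z = c^*(e)te$ then yields $c^*(e)\,e\cdot e' \le c^*(e') + O(|e-e'|)$, and by symmetry $|c^*(e)-c^*(e')| \le C|e-e'|$ with $C$ depending only on $c'$.
\medskip

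\emph{Part (i), closedness and uniform $\Om_0$.} Fix a countable dense $D \subseteq \mathbb S^{d-1}$ and set $\Om_0 := \bigcap_{e' \in D \cap \mathcal T}\Om_0(e')$. For $\om \in \Om_0$, I plan to introduce upper and lower asymptotic front-speed functions $\underline c_\om(e)$ and $\overline c_\om(e)$ (the $\inf$ and $\sup$ of velocities at which the respective level sets of $u_{\om,e}$ can propagate), satisfying $\overline c_\om(e) \le \underline c_\om(e)$, with the front speed at $e$ existing for this $\om$ iff the two agree. The shift-and-compare argument above gives $|\underline c_\om(e) - c^*(e')|, |\overline c_\om(e) - c^*(e')| \le C|e-e'|$ for every $e' \in D\cap\mathcal T$; sending $e' \to e$ along $D\cap\mathcal T$ forces $\overline c_\om(e) = \underline c_\om(e) = \lim c^*(e')$, proving simultaneously that $\mathcal T$ is closed, that $c^*$ extends to it Lipschitz continuously, and that the single $\Om_0$ works for every $e \in \mathcal T$. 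The statement about $\tilde{\mathcal T}$ is handled identically, applying the comparison to the $\alpha$-perturbed initial data of Definition \ref{D.1.5} and inheriting a rate function $\beta_{K,e}$ from any nearby $e' \in D\cap\tilde{\mathcal T}$ after a slight enlargement of $K$.
\medskip

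\emph{Part (ii), Freidlin-G\"artner formula.} For the inequality $w(e) \le c^*(e')/(e'\cdot e)$ with $e' \in \mathcal T$ and $e'\cdot e > 0$, I plan to compare the compactly-supported solution $u_\om$ of Definition \ref{D.1.2} with a front-like solution in direction $e'$: the initial datum of $u_\om$ lies below $\tfrac{1+\theta_0}{2}\chi_{\{x\cdot e' < s_0\}}$ for some $s_0 = O(1)$, and the same shift-and-stationarity comparison yields $u_\om(t,x) \to 0$ whenever $x\cdot e' > c^*(e')t + O(1)$, forcing $w(e)\,e\cdot e' \le c^*(e')$. For the matching lower bound $\inf \le w(e)$, the key is to exploit convexity of $\mathcal S$ (from Theorem \ref{T.1.4}(ii)) together with density of differentiable boundary points of a convex body: pick $y_n \to w(e)e$ in $\partial\mathcal S$ at which $\partial\mathcal S$ has a unique outer normal $e'_n$; Theorem \ref{T.1.4}(ii) then guarantees $e'_n \in \mathcal T$ and $c^*(e'_n) = \sup_{x \in \overline{\mathcal S}} x\cdot e'_n = y_n \cdot e'_n$. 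Since $0$ lies in the interior of $\mathcal S$ and $w(e) > 0$, every outer normal at $w(e)e$ has strictly positive dot product with $e$, and upper semicontinuity of the normal cone gives $e'_n \cdot e \ge \delta > 0$ along a subsequence, so $c^*(e'_n)/(e'_n\cdot e) = (y_n \cdot e'_n)/(e'_n \cdot e) \to w(e)$, completing the argument.
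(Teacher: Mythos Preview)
Your Part (ii) argument is essentially the paper's: both use convexity of $\mathcal S$, pick boundary points $y_n\to w(e)e$ where $\partial\mathcal S$ has a unique outer normal $e_n'$, invoke Theorem~\ref{T.1.4}(ii) to get $e_n'\in\mathcal T$ with $c^*(e_n')=y_n\cdot e_n'$, and pass to the limit. The easy inequality $w(e)\le c^*(e')/(e'\cdot e)$ via comparison is also the same.

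Your Part (i) has the right shape---compare $u_{\om,e}$ against a shifted direction-$e'$ front on a large ball via Lemma~\ref{L.4.2}---but there is a genuine gap in how you handle the shift. Because $s=R|e-e'|$ with $R\sim c't$, the shift $s=s(t)$ grows linearly in $t$, so you are evaluating the front-speed asymptotics of $u_{\mscr T_{s(t)e'}\om,\,e'}$ for a \emph{different} element of $\Om$ at each time. Knowing that $\Om_0(e')$ is $\mscr T_y$-invariant only tells you that for each fixed $y$ the solution $u_{\mscr T_y\om,e'}$ eventually behaves like a front with speed $c^*(e')$; it gives no control on \emph{how long} ``eventually'' takes as a function of $y$. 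Since $|s(t)|\to\infty$, the time $T(\mscr T_{s(t)e'}\om,\delta,\eta)$ after which the front-speed estimate kicks in could itself grow without bound, and your comparison at time $t$ need not say anything.

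The paper closes exactly this gap via Proposition~\ref{P.3.5}: an Egorov-plus-Wiener's-ergodic-theorem argument upgrades the deterministic front speed to a \emph{strong} front speed, i.e., the convergence in Definition~\ref{D.1.3} holds uniformly over all shifts $|y|\le\Lambda t$ for any fixed $\Lambda$. Once that uniformity is in hand, the comparison you describe goes through (the paper packages it as Theorem~\ref{T.4.4}), yielding the Lipschitz bound, closedness of $\mathcal T$ and $\tilde{\mathcal T}$, and the single full-measure $\Om_0$. So your outline is correct in spirit, but you need to replace the appeal to translation-invariance of $\Om_0(e')$ with this uniformity statement, whose proof is a separate (and not entirely trivial) ergodic-theoretic step.
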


{\it Remark.}  The proof of Theorem \ref{T.1.9}(i) in fact shows that a stronger result holds: we can choose $\Om_0$ so that in Definition \ref{D.1.5} we have $\beta_{R_eK,e}=\beta_{K,e_1}$ for any $e\in\mathcal {\tilde T}$,  any rotation $R_{e}$ on $\mathbb R^d$ with $R_{e} e_1=e$, and any compact $K\subseteq \{x_1>0\}\subseteq \RR^d$, and there is also uniformity of the limits in Definitions \ref{D.1.3} and \ref{D.1.5} over rotations as well as over certain translations in $\Om$.  Namely, for each $\om\in \Om_0$ and $\La>0$ we have
\begin{align}
\lim_{t\to \infty}  \inf_{e\in\mathcal T\,\&\,|y|\le\La t} \inf_{x\in (c^*(e)e-R_eK)t} u_{\mscr{T}_y\om,e}(t,x) &= 1,  \label{1.97}
\\ \lim_{t\to \infty} \sup_{e\in\mathcal T\,\&\,|y|\le\La t} \sup_{x\in (c^*(e)e+R_eK)t} u_{\mscr{T}_y\om,e}(t,x) &= 0, \label{1.98}
\\  \limsup_{t\to \infty}  \sup_{e\in\mathcal {\tilde T} \,\&\,|y|\le\La t} \sup_{x\in (c^*(e)e+R_eK)t} u_{\mscr{T}_y\om,e,\alpha}(t,x) & \le\beta_{K,e_1}(\alpha).  \label{1.99}
\end{align}
Such uniformity over translations in $\Om$ also holds for the Wulff shape limits in Definition \ref{D.1.2} (see Proposition \ref{P.3.3} below).

\subsection{Organization of the paper}  
In Section \ref{ss.background} we collect some preliminary results, and also prove Theorems \ref{T.1.6}(i) and \ref{T.1.6a}.
In Section \ref{sec:sspeed} we study Wulff shapes and prove  Theorems \ref{T.1.4}(i) and \ref{T.1.8}(i).  In Section \ref{s:fs} we relate Wulff shapes and front speeds, and prove Theorem \ref{T.1.4}(ii,iv) as well as the corresponding parts of Theorem \ref{T.1.8}(ii), and also  Theorem \ref{T.1.9}.
Finally, in Section \ref{s:fstohom} we 
prove Theorems \ref{T.1.4}(iii) and \ref{T.1.6}(ii), as well as the corresponding parts of Theorem \ref{T.1.8}(ii).  

\subsection{Acknowledgements.} 
We thank Mark Freidlin, Antoine Mellet, Panagiotis Souganidis, and Jack Xin for valuable discussions.  AZ was supported in part by NSF grants DMS-1652284 and DMS-1656269. JL was supported in part by NSF grants DMS-1147523 and  DMS-1700028.

\section{Preliminaries}\label{ss.background}

In this section we collect some useful results concerning solutions to \eqref{genhomeq} that we will need below.  The first one, which was already mentioned in the introduction, shows that the asymptotic spreading speed of solutions to \eqref{genhomeq} with large enough initial data is no less than $c_0$, the unique speed for the homogeneous reaction $f_0$.  

%

\begin{lem} \label{lem:slowspread}
For any $f_0,M$ as in \eqref{h:stat0}, there is $\eta_{0}=\eta_{0}(f_{0}, M)\in(0,\frac 12)$ such that for each $c<c_{0}$ and $\eta>0$, there is $\la(f_{0}, M, c, \eta)\geq 0$ such that the following holds.  If $f$ satisfies \eqref{h:stat0}, $0\le u\le 1$ solves \eqref{genhomeq} for some $\om\in\Om$, and
 $u(t_{1}, x)\geq 1-\eta_{0}$ for some $(t_{1}, x)\in [1,\infty)\times \RR^{d}$, then for each $t\geq t_{1}+\la(f_{0}, M, c, \eta)$ we have
\begin{equation} \label{2.1}
\inf_{|y-x|\leq c(t-t_{1})} u(t, y)\geq 1-\eta.
\end{equation}
The same result holds if the hypothesis $u(t_{1}, x)\geq 1-\eta_{0}$ 
is replaced by 
\begin{equation*}
u(t_{1}, \cdot)\geq (\theta_{0}+\alpha)\chi_{B_{R}(x)}(\cdot)
\end{equation*}
for some $(t_{1}, x)\in [0, \infty)\times \RR^{d}$ and $\alpha>0$, with $R=R(f_{0},\alpha)$ large enough. 
\end{lem}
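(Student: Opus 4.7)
My plan is to prove the second (ball-hypothesis) version of the lemma first, and then reduce the first (point-hypothesis) version to it using parabolic regularity.

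For the ball version, the reaction bound $f \geq f_0$ from \eqref{h:stat0} and the parabolic comparison principle yield $u(t_1+s,\cdot) \geq v(s,\cdot)$ for all $s\geq 0$, where $v$ solves the homogeneous ignition equation $v_s = \Delta v + f_0(v)$ with initial datum $(\theta_0+\alpha)\chi_{B_R(x)}$. For $R \geq R(f_0, \alpha)$ large enough, classical Aronson-Weinberger theory \cite{aronwein} shows $v \to 1$ locally uniformly. The quantitative spreading bound follows by comparing $v$ from below with shifts of compactly supported radial subsolutions built from the unique traveling-front profile $(c_0, U_0)$ of $f_0$; these can be arranged to propagate with normal speed arbitrarily close to $c_0$ from below while pushing their interior values above $1-\eta$, yielding the promised $\lambda(f_0, M, c, \eta)$.

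For the point version, given $u(t_1, x) \geq 1 - \eta_0$ with $t_1 \geq 1$, I would apply parabolic Schauder estimates --- valid because $f$ is Lipschitz in $(x,u)$ uniformly in $\om$ and $u$ is bounded between $0$ and $1$ --- to obtain a uniform $C^{2,\alpha}$ bound on $u$ on a unit parabolic neighborhood of $(t_1, x)$, with constants depending only on $M$ and $d$. Combining this with the elementary fact that a nonnegative $C^{1,1}$ function (here $w:=1 - u \geq 0$) with value $\eta_0$ at a point has gradient of size at most $C\sqrt{\eta_0}$ there yields the refined lower bound
\[
u(t_1, y) \geq 1 - \eta_0 - C\sqrt{\eta_0}\,|y - x| - C|y-x|^2.
\]
For $\eta_0 \leq \eta_0(f_0, M)$ small enough, this places $u(t_1, \cdot)$ above $\theta_0 + \alpha_0$ on a fixed ball $B_{r_0}(x)$, with $r_0, \alpha_0 > 0$ depending only on $f_0, M, d$.

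The main obstacle is that the radius $r_0$ extracted from Schauder regularity can be strictly smaller than the spreading radius $R(f_0, \alpha_0)$ required by the ball version --- for instance when $f_0$ is weak compared to the Lipschitz constant $M$. I would close this gap by taking $\eta_0$ even smaller, so that $u(t_1,\cdot)$ is close to $1$ (not merely above $\theta_0+\alpha_0$) on $B_{r_0}(x)$, and then constructing an explicit radial subsolution of $v_s = \Delta v + f_0(v)$ that combines diffusive growth of the support with the ODE evolution $\dot W = f_0(W)$ at the center; since $W$ increases monotonically from a value near $1$ toward $1$, the subsolution retains height near $1$ at its center while spreading. After a universal waiting time $s_0 = s_0(f_0, M, d)$ the subsolution exceeds $\theta_0 + \alpha_0$ on the larger ball $B_{R(f_0, \alpha_0)}(x)$, and then the ball version applied at time $t_1 + s_0$ (with $s_0$ absorbed into the final $\lambda$) completes the proof.
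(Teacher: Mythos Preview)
Your proposal is correct and follows essentially the same route as the paper: the paper attributes the ball-hypothesis version to Aronson--Weinberger \cite{aronwein} combined with the comparison principle, and says the point-hypothesis version ``follows from this and parabolic regularity (see Lemma~3.1 in \cite{andrejbd}),'' which is precisely your two-step reduction. Your final paragraph addressing the possible mismatch between the regularity radius $r_0$ and the Aronson--Weinberger radius $R(f_0,\alpha_0)$ is a detail the paper leaves to the cited reference, and your resolution via a waiting-time subsolution argument is sound.
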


The second claim in Lemma \ref{lem:slowspread} is a result of Aronson and Weinberger \cite{aronwein}, combined with the comparison principle, while the first claim follows from this and parabolic regularity (see Lemma 3.1 in \cite{andrejbd}). 

We also have an upper bound on the spreading speed for compactly supported initial data, which follows from the next lemma.

\begin{lem} \label{L.4.2}
There are $m',c'>0$ such that if $\om\in\Om$, $r>0$, $y\in\RR^d$, and $u,u'$ are two solutions of \eqref{genhomeq} taking values in $[0,1]$ and satisfying $u(0,x)\le u'(0,x)$ for all $x\in B_r(y)$, 
 then for all $t\ge 0$ we have
\[
u(t,y)\le u'(t,y) + c'e^{-m'(r-c't)}.
\]
\end{lem}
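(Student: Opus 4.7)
The plan is to compare $w := u - u'$ to the solution of a linearized parabolic problem, and then bound the latter by Gaussian heat kernel tail estimates. Using the Lipschitz bound on $f$, I define
\[
b(t,x) := \begin{cases} [f(x,u,\om) - f(x,u',\om)]/(u - u'), & u \ne u', \\ 0, & u = u', \end{cases}
\]
so that $|b| \le M$ and $w_t = \Delta w + b(t,x) w$ with $|w| \le 1$ throughout, and $w(0, \cdot) \le \chi_{\RR^d \setminus B_r(y)}$. Let $W$ denote the bounded solution on $(0,\infty) \times \RR^d$ of $W_t = \Delta W + M W$ with initial data $W(0, \cdot) = \chi_{\RR^d \setminus B_r(y)}$. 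I would show $w \le W$ by setting $V := w - W$, computing $V_t - \Delta V = bw - MW = bV + (b - M)W$, and observing that $W \ge 0$ and $b \le M$ make the last term nonpositive. Hence $V$ is a subsolution of $V_t = \Delta V + b V$ with $V(0, \cdot) \le 0$, and the substitution $\widetilde V := e^{-M t} V$ transforms this into a parabolic inequality with nonpositive zero-order coefficient; the maximum principle on $\RR^d$ (valid because both $V$ and $W$ are bounded) then yields $V \le 0$.

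The next step is to evaluate $W(t, y) = e^{M t} P(t, y)$, where $P$ solves the standard heat equation with the same initial data. Splitting $|z - y|^2/(4t) = |z - y|^2/(8t) + |z - y|^2/(8t) \ge |z - y|^2/(8t) + r^2/(8t)$ on the region $\{|z - y| > r\}$, one obtains
\[
P(t, y) \le e^{-r^2/(8t)} \int_{\RR^d}(4\pi t)^{-d/2} e^{-|z - y|^2/(8t)}\, dz = 2^{d/2}\, e^{-r^2/(8t)},
\]
and hence $w(t, y) \le 2^{d/2}\, e^{M t - r^2/(8t)}$.

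Finally I must match this with an expression of the form $c'\, e^{-m'(r - c' t)}$. The calibration that works is $c' := 2\sqrt{2M}$ and $m' := \sqrt{M/2}$ (so that $c' = 4m'$), for which a direct expansion yields the identity
\[
M t - \tfrac{r^2}{8t} + m'(r - c' t) = -\tfrac{(r - c' t)^2}{8 t} \le 0.
\]
Thus $w(t, y) \le 2^{d/2} e^{-m'(r - c' t)}$, and replacing $c'$ by $\max(2\sqrt{2M}, 2^{d/2})$ (which only enlarges $e^{-m'(r - c' t)}$) produces the claimed inequality. The main technical point is precisely this last calibration: $c' = 2\sqrt{2M}$ is the linearized spreading speed for $W_t = \Delta W + M W$, and it is this choice that makes the Gaussian tail $e^{-r^2/(8t)}$ track the required affine-in-$r$ decay $e^{-m'(r - c' t)}$ uniformly in $t$. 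Any strictly smaller $c'$ fails at intermediate times where the reaction factor $e^{M t}$ would overwhelm the Gaussian localization; everything else --- the linearization of $w$, the comparison with $W$, and the elementary Gaussian split --- is routine.
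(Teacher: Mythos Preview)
Your proof is correct and follows the same overall scheme as the paper: linearize $w=u-u'$ to a subsolution of $w_t=\Delta w+Mw$, then dominate it by an explicit supersolution of that linear equation which majorizes $\chi_{\RR^d\setminus B_r(y)}$ at $t=0$. The only difference is the choice of barrier: the paper uses the sum of $2d$ one-dimensional exponential traveling waves
\[
\sum_{j=1}^d\Bigl(e^{-\sqrt M(x\cdot e_j+rd^{-1/2}-2\sqrt M t)}+e^{-\sqrt M(-x\cdot e_j+rd^{-1/2}-2\sqrt M t)}\Bigr),
\]
which solves the linear equation exactly and is $\ge 1$ outside the cube (hence outside $B_r$), giving $m'=\sqrt{M/d}$ and $c'=2\sqrt M\,d$; you instead take the actual solution $W=e^{Mt}P$ with heat-kernel initial data and extract the Gaussian tail $2^{d/2}e^{-r^2/(8t)}$, arriving at $m'=\sqrt{M/2}$ and $c'=\max(2\sqrt{2M},2^{d/2})$. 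The paper's barrier is slightly more elementary (no integral estimate, just an explicit check), while yours gives a dimension-independent decay rate $m'$ at the cost of a $2^{d/2}$ prefactor; either works for the lemma's purposes.
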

\begin{proof}
Without loss assume that $y=0$. Then $w:=u-u'$ satisfies $w(0,\cdot)\le \chi_{\RR^d\setminus B_r}$ and $w_t\le \Delta w + Mw$ (with $M\ge 1$ the Lipschitz constant for $f$).  It follows that, with $\{e_j\}_{j=1}^d$ the standard basis in $\RR^d$, we have
\[
w(t,x)\le \sum_{j=1}^d \left( e^{-\sqrt M(x\cdot e_j+rd^{-1/2}-2\sqrt M t)} + e^{-\sqrt M(-x\cdot e_j+rd^{-1/2}-2\sqrt M t)}\right)
\]
for any $(t,x)\in[0,\infty)\times\RR^d$  because the sum solves $w'_t= \Delta w' + Mw'$.  The claim now follows with $m':=\sqrt {M/d}$ and $c':=2 \sqrt M\, d$.
\end{proof}

We note that 
a more careful proof would allow for any $c'>2\sqrt M$ in the exponent, but we will not need to optimize $c'$ here. Also note that the lemma immediately generalizes to the conclusion 
\begin{equation*}
u(t,x)\leq u'(t,x)+Ce^{-m'(-|x-y|+r-c't)}\quad\text{for all } x\in \RR^{d}. 
\end{equation*}

A key ingredient in the proof of Theorems \ref{T.1.4}(i) and \ref{T.1.6}(i)
is the bounded width property, first defined by one of us in \cite{andrejbd}. 
 We  say  that $u$ has \emph{bounded width} if $\limsup_{t\to\infty} L_{u, \eta}(t)<\infty$ for each $\eta\in (0, \frac 12)$ (see \eqref{def:tzwidth}, and also compare this to the hypothesis in Theorem \ref{T.1.8}(i)).  
 That is,
\begin{equation}\label{def:bw}\tag{BW}
\sup_{t\geq T_{\eta}} L_{u, \eta}(t)\leq \ell_{\eta} \qquad\text{for each $\eta\in \left(0, \frac 12 \right)$ and some $\ell_{\eta}, T_{\eta}\ge 0$.} 
\end{equation}
(We note that \cite{andrejbd} defines $L_{u,\eta}(t)$ with $\left\{x \,\mid\, u(t, x)\geq 1-\eta \right\}$ in place of $\left\{x \,\mid\, u(t, x)\geq 1-\eta \right\}^0_{1/\eta}$ but as discussed before Theorem 2.9 in \cite{andrejbd}, the two resulting definitions of bounded width are equivalent by parabolic regularity.)  
The following theorem, proved by one of us in \cite{andrejbd}, guarantees \eqref{def:bw} and more for certain solutions to \eqref{genhomeq}.  In it, for $\eta,\eta'\in(0,1)$ we let
\[
L_{u, \eta,\eta'}(t):=\inf\left\{L>0 \mid \left\{x \,\mid\, u(t, x)\geq \eta\right\}\subseteq B_{L}\left(\left\{x \,\mid\, u(t, x)\geq \eta' \right\} \right)\right\}.
\]

\begin{thm}\label{thm:bw}
Let $f,f_{0}, M$ be from \eqref{h:stat0} and such that $f\in \mathcal{F}(f_0,M,\zeta,\xi)$ for some $\zeta<c_0^2/4$ and $\xi>0$, and let $0\le u\le 1$ solve \eqref{genhomeq} for some $\om\in\Om$.
If $d\le 3$, $\eta_0$ is from Lemma \ref{lem:slowspread}, $\sigma_u:=\sup_{\eta\in(0,1)}\eta e^{\sqrt\zeta\,L_{u,\eta,1-\eta_0}(0)}<\infty$, and
\begin{equation}\label{eq:icsub}
\De u(0,x)+f(x, u(0,x), \om)\geq 0 
\end{equation}
in the sense of distributions on $\bbR^d$,
 then we have \eqref{def:bw} and
\[
\inf_{\substack{(t,x)\in (T_{\eta}, \infty)\times \RR^{d}\\u(t,x)\in [\eta, 1-\eta]}}u_{t}(t,x)\geq m_{\eta} \qquad\text{for each $\eta>0$ and some $m_\eta>0$,}
\]
with $\ell_\eta,m_\eta$  depending only on $\eta,f_0,M,\zeta,\xi$ and $T_\eta$ also on $\sigma_u$.
\end{thm}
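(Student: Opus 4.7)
The plan is to exploit the dichotomy built into the class $\mathcal F(f_0,M,\zeta,\xi)$. On the set $\{u<\gamma_f\}$ the reaction is subcritical, $f(x,u,\om)\le \zeta u$, so $u$ is governed at leading order by the linearization $v_t=\Delta v+\zeta v$, whose invasion speed is $2\sqrt\zeta<c_0$. On the intermediate strip $\{u\in[\gamma_f,\theta_0]\}$ the hypothesis \eqref{1.4b} forces $f\ge\xi$, producing a uniform positive transit rate. On $\{u\ge\theta_0+\alpha\}$ Lemma~\ref{lem:slowspread} delivers propagation at speeds arbitrarily close to $c_0$. Thus the pushed front is strictly faster than the linear tail, which is what must prevent the intermediate zone from spreading.

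As a first step I would differentiate \eqref{genhomeq} in $t$: the hypothesis \eqref{eq:icsub} is precisely $u_t(0,\cdot)\ge 0$ distributionally, and $u_t$ solves $(u_t)_t=\Delta u_t+f_u(\cdot,u,\om)u_t$, so the parabolic maximum principle gives $u_t\ge 0$ throughout. This monotonicity lets me track $u$ through its level sets rather than through oscillations in time. To control the tail I would construct an exponential supersolution of the form
\begin{equation*}
\phi(t,x):=C\sum_{i}\exp\!\bigl(\sqrt\zeta\,\bigl(2\sqrt\zeta\,t-\nu_i\cdot(x-x_i)\bigr)\bigr),
\end{equation*}
where $(x_i,\nu_i)$ is a covering of $\{u(0,\cdot)\ge\eta\}$ at scale $1/\eta$; the finiteness of $\sigma_u$ is exactly what is needed to choose such a cover so that $\phi(0,\cdot)\ge u(0,\cdot)$. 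Since $\phi_t=\Delta\phi+\zeta\phi$ exactly, a comparison argument on the open set $\{u<\gamma_f\}$ (where its boundary values automatically satisfy $u=\gamma_f\le\phi$ as long as $\phi\ge\gamma_f$ there) gives $u\le\phi$, hence the set $\{u(t,\cdot)\ge\gamma_f\}$ advances at speed at most $2\sqrt\zeta<c_0$.

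Next, on any region where $u\in[\gamma_f,\theta_0]$ the bound $f\ge\xi$ together with the global $L^\infty$ bound on $u$ and interior parabolic regularity would produce the required $u_t\ge m_\eta>0$ on $\{u\in[\eta,1-\eta]\}$; a clean way to do this is to bound $|\Delta u|$ on balls where $u$ stays inside $[\eta,1-\eta]$ for a long time and combine with the equation, using the monotonicity $u_t\ge 0$ to rule out cancellation. In particular every material point traverses $[\gamma_f,\theta_0]$ in time $O(\xi^{-1})$, after which Lemma~\ref{lem:slowspread} takes over and the pushed front advances with speed $\ge c_0-\delta$. Assembling the two estimates: $\{u\ge 1-\eta\}$ expands with speed close to $c_0$ while $\{u\ge\eta\}$ expands with speed at most $2\sqrt\zeta$, so the Hausdorff distance between them stabilizes past some $T_\eta$ (depending on $\sigma_u$ only through the time it takes for $\phi$ to drop below $\gamma_f$), giving \eqref{def:bw} with the stated uniform constants.

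The hard part is the dimension restriction $d\le 3$. The tail and pushed estimates individually work in every dimension, but the coupling between them does not: one must exclude the scenario in which a long, thin filament inside $\{u<\gamma_f\}$ reaches the threshold $\gamma_f$ far ahead of the pushed front and seeds a new pushed region, inflating the width. Preventing this appears to require an isoperimetric/energy comparison between the Dirichlet energy carried by the level sets of $u$ on such filaments and the reactive production $\xi$ they can sustain, and the relevant scaling closes only up to $d=3$. Indeed \cite{andrejbd} constructs counterexamples of precisely this filamentary type for $d\ge 4$, so the obstruction is genuine, and any attempt to extend the conclusion to higher dimensions must trade the pointwise-in-$\om$ argument for one that uses ergodicity to rule out such filaments almost surely (cf.\ Conjecture~\ref{C.1.12}).
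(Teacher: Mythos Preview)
The paper does not prove this theorem: it is quoted from \cite{andrejbd} (see Remark~1 following the statement, which identifies it as Remark~2 after Theorem~2.5 there, with the dependence of the constants coming from the proof in Section~5 of that reference). So there is no proof in the present paper to compare against; your sketch is an attempt to reconstruct the argument of \cite{andrejbd} rather than to reproduce anything appearing here.

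As a heuristic outline, your decomposition into the subcritical tail ($f\le\zeta u$, speed $\le 2\sqrt\zeta$), the uniform intermediate push ($f\ge\xi$), and the supercritical core (Lemma~\ref{lem:slowspread}, speed $\ge c_0-\delta$) is the right organizing principle, and the monotonicity $u_t\ge 0$ from \eqref{eq:icsub} is indeed the starting point. However, the sketch is not close to a proof. Two concrete gaps: first, your supersolution $\phi$ is defined via a ``covering at scale $1/\eta$'' that is neither finite nor summable in general, and the comparison on $\{u<\gamma_f\}$ requires controlling $u$ on the moving free boundary $\{u=\gamma_f(x,\om;\zeta)\}$, which is $x$-dependent and need not be a level set of $u$---the claimed automatic ordering $u=\gamma_f\le\phi$ there does not follow. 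Second, your derivation of $u_t\ge m_\eta$ from ``$f\ge\xi$ and bounding $|\Delta u|$'' is circular: parabolic regularity bounds $|\Delta u|$ by a constant that could swamp $\xi$, and monotonicity alone does not rule this out. The actual argument in \cite{andrejbd} for $u_t\ge m_\eta$ is considerably more delicate and is where the dimensional restriction enters substantively, not merely through a filament-exclusion heuristic; your final paragraph correctly flags this as the crux but does not supply the mechanism.
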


%
%

{\it Remarks.}  1.  Specifically, this is Remark 2 after Theorem~2.5 in \cite{andrejbd}, while Theorem 2.5 itself is a stronger result for ignition reactions, only requiring $L_{u, \eta, 1-\eta_0}(0)<\infty$ for each $\eta>0$.  The claim about $\ell_\eta,m_\eta,T_\eta$ follows from the proof of the remark in Section 5 of \cite{andrejbd}.  (In the case of Theorem 2.5, dependence of $T_\eta$ on $\sigma_u$ is replaced by its dependence on  the ignition temperature $\theta$ and $L_{u,h,1-\eta_0}(0)$ for some $h=h(\eta,f_0,M,\zeta,\xi,\theta)$.)  
We  note that these quantities do not explicitly depend on $f$ and $\om$, and the dependence on $u(0,\cdot)$ (of $T_\eta$ only) is only via $\sigma_u$.
\smallskip

2. The hypothesis \eqref{eq:icsub} guarantees that $u_{t}\geq 0$ because $w:=u_t$ satisfies
\begin{equation*}
w_t=\De w+f_{u}(x, 0, \om)w.
\end{equation*}
It will suffice for us to apply Theorem \ref{thm:bw} to such solutions here.
\smallskip

3.  The limitation to $d\le 3$ is not just technical, as it is proved in Theorem~2.4 in \cite{andrejbd} that in dimensions $d\ge 4$ there are $f$ as above for which typical solutions to \eqref{genhomeq} do not have bounded width.  This also is the sole reason for Theorems \ref{T.1.4}(i) and \ref{T.1.6}(i) being restricted to $d\le 3$ (but see Conjecture \ref{C.1.12} above).
\smallskip


%

Theorem \ref{thm:bw} shows that when studying the Wulff shape for \eqref{genhomeq}, it may be advantageous to (equivalently, see the start of Section \ref{sec:sspeed} below) define it with compactly supported initial data satisfying \eqref{eq:icsub} rather than those from Definition \ref{D.1.3}.  Existence of such functions is guaranteed by the following lemma.

\begin{lem}\label{lem:bump}
There exists a compactly supported (radially symmetric) $v: \RR^{d}\rightarrow [0,\infty)$ 
that satisfies $\De v + f_0(v)\ge 0$ in the sense of distributions on $\bbR^d$, as well as $\|v\|_\infty=v(0)=1-\eta_0$ (with $f_0,\eta_0$ from Lemma \ref{lem:slowspread}). 
\end{lem}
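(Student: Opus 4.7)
The plan is to define $v$ as the extension by zero of a radial positive classical solution $v_R$ to the semilinear Dirichlet problem
\begin{equation*}
-\Delta v_R = f_0(v_R) \text{ in } B_R, \qquad v_R|_{\partial B_R}=0,
\end{equation*}
for an appropriately chosen $R>0$ with $v_R(0)=1-\eta_0$. Radial monotonicity of $v_R$ (via Gidas--Ni--Nirenberg, or via Schwarz rearrangement embedded in the variational construction below) guarantees $v_R(0)=\|v_R\|_\infty$, so the extension automatically satisfies $v(0)=\|v\|_\infty=1-\eta_0$.

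To produce $v_R$ I would minimize the energy
\begin{equation*}
E[w] := \int_{B_R}\Bigl(\tfrac12|\nabla w|^2 - F_0(w)\Bigr)\,dx,\qquad F_0(u):=\int_0^u f_0(s)\,ds,
\end{equation*}
over $\{w\in H_0^1(B_R) : 0\le w\le 1\}$. A radial test function equal to some $a\in(\theta_0,1)$ on $B_{R/2}$ and linearly cut off to zero on $B_R\setminus B_{R/2}$ satisfies $E\le C_1 R^{d-2}-C_2 R^d$ with $C_2>0$, so $\inf E\le -cR^d$ for $R$ large and some $c>0$. The direct method, combined with Schwarz rearrangement to enforce radial monotonicity and elliptic regularity upgrading $H^1_0$ weak solutions to classical $C^2$ ones, produces such a minimizer $v_R\in C^2(\overline{B_R})$, strictly positive in $B_R$ by the strong maximum principle. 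Comparison shows $v_R$ is pointwise non-decreasing in $R$, so $v_\infty(x):=\lim_{R\to\infty}v_R(x)$ exists and solves $\Delta v_\infty+f_0(v_\infty)=0$ on all of $\RR^d$. The energy bound rearranges to $\int_{B_R} F_0(v_R)\ge cR^d$, and since $f_0(\theta_0)=0$ and $f_0$ is Lipschitz gives $F_0(\theta_0+\delta)\le \tfrac12 M\delta^2$, I would deduce $|\{v_R>\theta_0+\delta\}|\ge c_\delta R^d$ for some $\delta,c_\delta>0$ independent of $R$; radial monotonicity then upgrades this to $v_R\ge \theta_0+\delta$ on $B_{c'_\delta R}$, hence $v_\infty\ge \theta_0+\delta$ on all of $\RR^d$. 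Applying the second assertion of Lemma~\ref{lem:slowspread} to the stationary evolution $u\equiv v_\infty$ now forces $v_\infty\equiv 1$. Thus $v_R(0)\nearrow 1$, and continuity of $R\mapsto v_R(0)$ plus the intermediate value theorem supplies $R$ with $v_R(0)=1-\eta_0$.

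Finally I would verify that the zero-extension $v$ is a distributional subsolution. Inside $B_R$ the equation $\Delta v+f_0(v)=0$ holds classically; on $\RR^d\setminus\overline{B_R}$ both terms vanish. Across $\partial B_R$ the function $v$ is continuous (both sides are zero), but its classical outward-normal derivative jumps from $v_R'(R)\le 0$ inside to $0$ outside, producing the non-negative surface measure $-v_R'(R)\,\mathcal{H}^{d-1}|_{\partial B_R}$ in $\Delta v$. Hence $\Delta v+f_0(v)\ge 0$ in the sense of distributions. The main obstacle in this plan is the identification $v_\infty\equiv 1$; that in turn rests on converting the variational energy bound into a uniform lower bound of the form $v_R\ge \theta_0+\delta$ on a ball of radius comparable to $R$, so that the spreading assertion of Lemma~\ref{lem:slowspread} becomes applicable.
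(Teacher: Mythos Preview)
Your approach is very different from the paper's. The paper never looks for a solution of $-\Delta v=f_0(v)$ at all; it writes down an explicit radial \emph{subsolution}: flat at height $1-\eta_0$ on a large ball $B_R$, then brought to zero by a $C^1$ piecewise-polynomial profile on an annulus. The only fact used is that $f_0$ is bounded below by a positive constant near $1-\eta_0$, which makes $\Delta v+f_0(v)\ge 0$ easy to check on the region where the profile is still above $\theta_0$; on the rest one just needs $\Delta v\ge 0$, which the explicit formula provides.

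The main gap in your argument is the intermediate-value step. For an ignition nonlinearity ($f_0\equiv 0$ on $[0,\theta_0]$), the energy minimizer on $B_R$ is identically $0$ for all small $R$: any competitor with $\|w\|_\infty\le\theta_0$ has $E[w]=\tfrac12\|\nabla w\|_2^2\ge 0$, and competitors that climb above $\theta_0$ cannot have negative energy when $R$ is small. At the threshold $R^*$ where the nontrivial branch becomes the minimizer, $v_R(0)$ jumps from $0$ to a value already exceeding $\theta_0$, so $R\mapsto v_R(0)$ is \emph{not} continuous, and there is no reason the jump should land below $1-\eta_0$. The difficulty is in fact structural: in dimensions $d\ge 3$ the radial shooting solution of $u''+\tfrac{d-1}{r}u'+f_0(u)=0$ with $u(0)=1-\eta_0$, $u'(0)=0$ need not reach $0$ at all. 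Once $u$ drops below $\theta_0$ at some $r_0$ the equation becomes $(r^{d-1}u')'=0$, and since $\int_{r_0}^\infty r^{1-d}\,dr<\infty$ the solution asymptotes to $\theta_0-\tfrac{|u'(r_0)|r_0}{d-2}$, which is positive whenever $1-\eta_0$ is close enough to $\theta_0$. So a positive radial Dirichlet solution on a ball with center value exactly $1-\eta_0$ may simply fail to exist.

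A secondary issue: ``comparison shows $v_R$ is pointwise nondecreasing in $R$'' is not immediate, since $-\Delta v=f_0(v)$ has no comparison principle when $f_0$ is not monotone. For minimizers one can salvage monotonicity via a $\min/\max$ rearrangement trick, but only modulo uniqueness. The paper's explicit construction sidesteps all of this by exploiting that only $\Delta v+f_0(v)\ge 0$ is needed, not equality.
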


\begin{proof}
Since we must have $1-\eta_0>\theta_0$ for Lemma \ref{lem:slowspread} to hold, $f_0$ is bounded away from 0  near $1-\eta_0$.  Then there is small $r>0$ such that for any $R\ge 1$, with $R':=R+r$ and $R'':=R+r+r^{-2}$, the function
\[
v(x):=
\begin{cases}
1-\eta_0 & |x|\le R \\
1-\eta_0 -(|x|-R)^3 & |x|\in(R,R'] \\
\max \left\{1-\eta_0 -r^3 -3r^2 \left[ (|x|-R')- \frac{r^2}2(|x|-R')^2 \right],0 \right\} & |x|\in (R', R''] \\
0 & |x|> R''
\end{cases}
\]
satisfies $\De v + f_0(v)\ge 0$ on $B_{R'}(0)$.  If now $R\ge\frac{d-1}{r^2}$, then the inequality is satisfied (in the sense of distributions) on $B_{R''}(0)$, and therefore also on $\bbR^d$ because $v(x)=0$ when $|x|=R''$.
\end{proof}



We also recall the ergodic theorems that we will need here.

\begin{thm}[Wiener's ergodic theorem \cite{becker}]\label{thm:et}
If \eqref{h:erg} holds, then for each $g\in L^{1}(\Om)$, there is $\Om_{0}\subseteq \Om$ with $\PP[\Om_{0}]=1$ such that for all $\om\in \Om_{0}$, 
\begin{equation}\label{eq:et}
\lim_{r\rightarrow \infty} \aint_{B_{r}}g(\mscr{T}_{y}\om)\, dy= \int_\Om g(\om)\,d\PP.
\end{equation}
\end{thm}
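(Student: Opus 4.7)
The plan is to follow the classical three-step route to the multidimensional pointwise ergodic theorem: (a) a Hardy--Littlewood--Wiener maximal inequality for the averaging operators $A_r g(\omega) := \fint_{B_r} g(\mathscr{T}_y \omega)\,dy$, (b) almost sure convergence on a dense subclass of $L^1(\Omega)$, and (c) identification of the limit using \eqref{h:erg}. The role of \eqref{h:stat} here is to enable transference between Euclidean geometry and the action on $\Omega$.

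First I would define the maximal operator $M g(\omega) := \sup_{r>0} |A_r g|(\omega)$ and prove a weak-type estimate
\begin{equation*}
\PP[Mg > \lambda] \leq \frac{C_d}{\lambda} \|g\|_{L^1(\Omega)} \qquad (\lambda>0)
\end{equation*}
with $C_d$ depending only on dimension. The standard way is a transference argument: fix a large box $Q_N \subseteq \RR^d$, apply Fubini together with the stationarity \eqref{h:stat} to rewrite $\PP[Mg>\lambda]$ as an average over $\omega$ of the normalized Lebesgue measure of the set where the Euclidean maximal function of $y \mapsto g(\mathscr{T}_y \omega)$ exceeds $\lambda$ on $Q_N$, then invoke the Vitali covering lemma to bound this by $C_d \lambda^{-1} \|g(\mathscr{T}_\cdot \omega)\|_{L^1(Q_{N'})}$, and finally let $N \to \infty$ after a second application of Fubini and \eqref{h:stat}.

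Next I would verify \eqref{eq:et} on a dense subclass of $L^1(\Omega)$. For constants the statement is trivial. For ``coboundaries'' of the form $g(\omega) = h(\mathscr{T}_{y_0}\omega) - h(\omega)$ with $h \in L^\infty(\Omega)$ and $y_0 \in \RR^d$, a change of variables shows that $A_r g(\omega)$ is an integral of $\pm h \circ \mathscr{T}_y \omega$ over $B_r \triangle (B_r - y_0)$ divided by $|B_r|$, and since $|B_r \triangle (B_r-y_0)|/|B_r| \to 0$ as $r\to\infty$, one gets $A_r g \to 0$ uniformly. The von Neumann mean ergodic theorem for the unitary $\RR^d$-action on $L^2(\Omega)$ guarantees that the linear span of constants and such coboundaries is dense in $L^2(\Omega)$, hence in $L^1(\Omega)$ after truncation. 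A standard $\varepsilon/3$-type argument, combining density with the weak-type maximal inequality, upgrades convergence on the dense subclass to a.s.\ convergence for every $g \in L^1(\Omega)$ to some limit $g^*(\omega)$.

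Finally, because $|A_r g(\mathscr{T}_z \omega) - A_r g(\omega)| \leq \frac{|B_r \triangle (B_r - z)|}{|B_r|} \cdot 2\|g\|_{L^1}/|B_r|^{1/2}$ (or directly by comparing domains of integration), the limit $g^*$ is $\mathscr{T}_z$-invariant for every $z \in \RR^d$, so \eqref{h:erg} forces $g^*$ to be a.s.\ constant. Integrating over $\Omega$ and using Fubini with \eqref{h:stat} on the bounded truncations (then passing to the limit by dominated convergence using the maximal inequality) identifies the constant as $\int_\Omega g\, d\PP$, giving \eqref{eq:et}. The main obstacle is the maximal inequality, specifically a clean transference of the Euclidean Vitali covering estimate to the action $\{\mathscr{T}_y\}$; once that inequality is in hand, the remaining steps are routine soft-analysis.
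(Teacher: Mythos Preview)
The paper does not prove this theorem; it is stated in the preliminaries section as a classical result with a citation to \cite{becker} and no proof. So there is nothing in the paper to compare your proposal against.

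Your outline is the standard Wiener--Calder\'on route and is essentially correct. One small slip: the displayed bound in your final paragraph,
\[
|A_r g(\mathscr{T}_z \omega) - A_r g(\omega)| \leq \frac{|B_r \triangle (B_r - z)|}{|B_r|} \cdot \frac{2\|g\|_{L^1}}{|B_r|^{1/2}},
\]
is not a correct pointwise inequality (the right-hand side is not dimensionally sensible, and no such uniform pointwise bound in terms of $\|g\|_{L^1(\Omega)}$ can hold). The clean way to see shift-invariance of the limit $g^*$ is either to argue first for bounded $g$ (where $|A_r g(\mathscr{T}_z\omega)-A_r g(\omega)| \le \|g\|_\infty \, |B_r \triangle (B_r-z)|/|B_r| \to 0$) and then pass to general $g\in L^1$ via the maximal inequality, or to note that for a.e.\ $\omega$ the orbit function $y\mapsto g(\mathscr{T}_y\omega)$ lies in $L^1_{\rm loc}(\RR^d)$ and use the Lebesgue differentiation/maximal estimate along the orbit to control the annular integral. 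Either fix is routine; the rest of your sketch is fine.
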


\begin{thm}[Kingman's subadditive ergodic theorem \cite{kingman}]\label{thm:set}
Assume that $g_{n}\in L^1(\Om)$ is a sequence of measurable functions on $(\Om, \mathcal{F}, \PP)$ such that  
\[
\inf_{n\in\mathbb N} \frac 1n\int_\Om g_n(\om)\,d\PP>-\infty,
\] 
and that  $\tilde{\mscr{T}}:\Om\rightarrow \Om$ is  measure-preserving bijection satisfying 
\begin{equation*}
g_{m+n}(\cdot)\leq g_{m}(\cdot)+g_{n} \left(\tilde{\mscr{T}}^m(\cdot) \right)
\end{equation*}
for any $m,n\in\mathbb N$.
Then $\frac 1n g_{n}$ converges almost everywhere on $\Om$ to some $g\in L^1(\Om)$ as $n\to\infty$, and 
\begin{equation*}
\int_\Om g(\om)\, d\PP=\lim_{n\rightarrow\infty}\frac{1}{n}\int_\Om g_{n}(\om)\, d\PP=\inf_{n\ge 1}\frac{1}{n}\int_\Om g_{n}(\om)\, d\PP.
\end{equation*}
If $\left\{\tilde{\mscr{T}}^m\right\}_{m\in\mathbb Z}$ is ergodic, then $g$ is almost everywhere constant.
\end{thm}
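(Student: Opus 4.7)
The plan is to derive Kingman's theorem from Wiener's ergodic theorem (Theorem \ref{thm:et}) via a Katznelson--Weiss-type sandwich: pass first from the subadditive sequence of $\PP$-means to the almost-sure $\limsup$, then close the gap to the $\liminf$ via a Vitali-style covering of orbit segments. First, integrating $g_{m+n} \le g_m + g_n \circ \tilde{\mscr{T}}^m$ and using that $\tilde{\mscr{T}}$ preserves $\PP$ shows that $n \mapsto \int g_n\,d\PP$ is subadditive, so Fekete's lemma gives
\[
\lim_n \tfrac{1}{n}\int g_n\,d\PP = \inf_n \tfrac{1}{n}\int g_n\,d\PP =: \gamma,
\]
finite by hypothesis. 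Set $\bar g := \limsup_n g_n/n$ and $\underline g := \liminf_n g_n/n$; the bounds $g_{n+1}(\om) \le g_1(\om) + g_n(\tilde{\mscr{T}}\om)$ and $g_{n+1}(\om) \le g_n(\om) + g_1(\tilde{\mscr{T}}^n\om)$, combined with $n^{-1} g_1(\tilde{\mscr{T}}^n\om) \to 0$ a.s.\ (Borel--Cantelli applied to integrability of $g_1$), yield $\tilde{\mscr{T}}$-invariance of $\bar g$ and $\underline g$, hence a.s.\ constancy in the ergodic case.

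For the upper bound, iterating subadditivity along multiples of a fixed $k$ gives
\[
g_{qk+r}(\om) \le \sum_{j=0}^{q-1} g_k(\tilde{\mscr{T}}^{jk}\om) + g_r(\tilde{\mscr{T}}^{qk}\om)
\]
for $0 \le r < k$. Dividing by $qk$, applying Theorem \ref{thm:et} to the measure-preserving map $\tilde{\mscr{T}}^k$, and observing that the weight of the remainder vanishes as $q \to \infty$, deliver $\bar g \le k^{-1} \EE[g_k \mid \mathcal{I}]$ almost surely, where $\mathcal{I}$ is the $\tilde{\mscr{T}}$-invariant $\sigma$-algebra. Taking expectation and then the infimum over $k$ yields $\EE[\bar g] \le \gamma$.

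The main obstacle is the matching lower bound $\EE[\underline g] \ge \gamma$; this is the step where naive subadditivity fails in the wrong direction and a covering argument is unavoidable. Given $\epsilon > 0$, I would truncate at a large $N$ and, for each $\om$, define a stopping time $\tau_N(\om) \in \{1,\dots,N\}$ as the smallest integer with $g_{\tau_N(\om)}(\om)/\tau_N(\om) \le \underline g(\om) + \epsilon$, setting $\tau_N(\om) := 1$ on the residual bad set $B_N$ whose measure is $o_N(1)$ by definition of $\underline g$. One then greedily tiles $\{0,\dots,L-1\}$ into disjoint blocks $[i_j, i_j + \tau_N(\tilde{\mscr{T}}^{i_j}\om))$ on the good set and single sites on the preimage of $B_N$; subadditivity produces
\[
\tfrac{1}{L} g_L(\om) \le \tfrac{1}{L} \sum_j (\underline g(\tilde{\mscr{T}}^{i_j}\om) + \epsilon) \tau_N(\tilde{\mscr{T}}^{i_j}\om) + \tfrac{1}{L} \sum_{i\in \text{bad}} g_1(\tilde{\mscr{T}}^i\om) + o(1).
\]
Applying Theorem \ref{thm:et} as $L \to \infty$ to average both sums, then sending $N \to \infty$ (killing the bad-set contribution) and $\epsilon \to 0$, gives $\gamma \le \EE[\underline g]$. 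The sandwich $\EE[\bar g] \le \gamma \le \EE[\underline g] \le \EE[\bar g]$ forces $\bar g = \underline g$ a.s.\ with common $\tilde{\mscr{T}}$-invariant limit $g$ satisfying $\EE[g] = \gamma$; $L^1$ convergence of $g_n/n$ to $g$ then follows from Fatou combined with equality of means, and the final clause about ergodicity is immediate from $\tilde{\mscr{T}}$-invariance of $g$.
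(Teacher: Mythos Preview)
The paper does not prove this statement; Theorem~\ref{thm:set} is simply quoted from \cite{kingman} as a classical tool, with no argument given. So there is nothing in the paper to compare your proof against---you have supplied a proof where the authors supply only a citation.

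Your outline follows the Katznelson--Weiss approach, which is a standard and correct route to Kingman's theorem. The architecture (Fekete for the mean, ergodic averaging for the upper bound, a greedy covering/stopping-time argument for the lower bound, then a sandwich) is sound. Two points, however, deserve attention. First, you invoke Theorem~\ref{thm:et} (Wiener's ergodic theorem) for the averages $\tfrac{1}{q}\sum_{j=0}^{q-1} g_k(\tilde{\mscr{T}}^{jk}\om)$ and $\tfrac{1}{L}\sum_i g_1(\tilde{\mscr{T}}^i\om)$, but as stated in the paper that theorem concerns averages over balls $B_r$ under the continuous $\RR^d$-action $\{\mscr{T}_y\}$, not Ces\`aro averages along a single $\ZZ$-action. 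What you actually need is Birkhoff's pointwise ergodic theorem for $\tilde{\mscr{T}}$ (and for $\tilde{\mscr{T}}^k$); this is more elementary than Wiener's theorem, but it is not Theorem~\ref{thm:et}. Second, the invariance of $\bar g$ and $\underline g$ as written gives only one-sided inequalities (e.g.\ $\bar g \le \bar g\circ\tilde{\mscr{T}}$); closing them to equalities requires either integrability of $\bar g$ (so that $\int \bar g = \int \bar g\circ\tilde{\mscr{T}}$ forces a.e.\ equality) or a separate argument, and you have not yet established that $\bar g \in L^1$ at that stage. These are repairable technicalities rather than structural gaps.
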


We end this section with the proofs of Theorems \ref{T.1.6a} and \ref{T.1.6}(i).  We only need to prove the first claim in each part of Theorem \ref{T.1.6a}, as the second claims will follow once we prove the results they refer to.

\begin{proof}[Proof of Theorem \ref{T.1.6a}]
Extend $f$ to $\bbR^d\times(\bbR\setminus[0,1])$ by letting it be 0 there, let $M$ be its Lipschitz constant, and let it be $(l_1,\dots,l_d)$-periodic on $\mathbb R^d$.  Fix any $e\in\mathbb S^{d-1}$.   It is proved in \cite{berhamel} that  \eqref{1.1} has a pulsating front solution with speed $c^*(e)>0$ and of the form $u(t,x)=U(x\cdot e -c^*(e)t,x)$.  Here $U\in C^{1}(\bbR^{d+1})$ is  $(l_1,\dots,l_d)$-periodic in the second argument and satisfies  $\lim_{s\to -\infty} U(s,x)=1$ and $\lim_{s\to \infty} U(s,x)=0$ uniformly in $x\in\mathbb R^d$, as well as $U_s<0$ (and hence $u_t>0$).  We note that $u$ itself is a classical solution and hence in $C^{1,2}(\bbR\times\bbR^d)$, although we do not need this here.  It follows that for each $\eta>0$ we have
\begin{equation} \label{2.10}
m_\eta:=\inf_{\substack{(t,x)\in \RR^{d+1}\\u(t,x)\in [\eta, 1-\eta]}}u_{t}(t,x) >0.
\end{equation}
We also note that for $f$ from (ii) (i.e., ignition), this speed is unique, while for non-ignition $f$ the speed is not unique but there is a minimal speed (which is denoted $c^*(e)$).  In the latter case, there is a sequence $c_n\nearrow c^*(e)$ and Lipschitz ignition reactions $f_n(x,u):=g_n(u)f(x,u)$ with $0\le g_n\le 1$ (so $f_n\le f$) and $g_n=1$ on $[1-\theta,1]$ such that $c_n$ is the unique speed of a pulsating front in direction $e$ for $f_n$ (for each $n$).  In fact, $c^*(e)$ and the corresponding pulsating front in direction $e$ for $f$ are in this case  obtained in \cite{berhamel} as limits of these objects for the reactions $f_n$.

(ii) Let $\eta:=\frac12\min\{\theta,\theta'\}>0$, and consider any $\alpha\in(0,\eta]$, with
\[
u^{\pm\alpha}(t,x):= u((1\pm M\alpha m_{\eta}^{-1})t,x) \pm \alpha.
\]
Then \eqref{2.10} and the fact that $u$ solves \eqref{1.1} and $f$ is non-increasing in $u\in(-\infty,2\eta]$ as well as in $u\in[1-2\eta,\infty)$ show that $u^{-\alpha}$ is a subsolution and $u^{+\alpha}$ is a supersolution to \eqref{1.1}.  Moreover, the properties of $U$ show that 
\[
u^{-\alpha}(-t',\cdot)\le (1-\alpha)\chi_{\{x\cdot e<0\}} \le u(t',\cdot)
\]
 for some $t'\ge 0$.  Using also 
 Lemma \ref{lem:slowspread}, and $\lim_{\alpha\to 0} (1- M\alpha m_{\eta}^{-1})c^*(e)=c^*(e)$, it now follows that \eqref{1.1} has front speed $c^*(e)$ in direction $e$.  This speed is also exclusive (with $\beta_{K,e}(\alpha)=\alpha$ for all sufficiently small $\alpha>0$ once $K$ is arbitrary but fixed) because
\[
\chi_{\{x\cdot e<0\}} + \alpha \chi_{\{x\cdot e\ge 0\}} \le u^{+\alpha}(t'',\cdot)
\]
 for some $t''\ge 0$ and $\lim_{\alpha\to 0} (1+ M\alpha m_{\eta}^{-1})c^*(e)=c^*(e)$.

(i) For each $n$ as above, let $u^n$ be the pulsating front  in direction $e$ for $f_n$ (with speed $c_n$) and let $M_n$ be the Lipschitz constant for $f_n$.  Also define $\eta_n,m_{\eta_n}$ as above but for $f_n$ (so they also depend on $n$).  If now
\[
u^{n,-\alpha}(t,x):= u^n((1- M_n\alpha m_{\eta_n}^{-1})t,x) - \alpha
\]
for $\alpha\in(0,\eta_n]$, then $u^{n,-\alpha}$ is again a subsolution of \eqref{1.1} and  we  have
\[
u^{n,-\alpha}(-t'_n,\cdot)\le (1-\alpha)\chi_{\{x\cdot e<0\}} \le u(t'_n,\cdot)
\]
 for some $t'_n\ge 0$. Since  $\lim_{n\to\infty} \lim_{\alpha\to 0} (1- M\alpha m_{\eta_n}^{-1})c_n=c^*(e)$, it again follows  that \eqref{1.1} has front speed $c^*(e)$ in direction $e$.
\end{proof}

\begin{proof}[Proof of Theorem \ref{T.1.6}(i)]
Let $c^*(e)$ be the deterministic front speed in direction $e$, and let $u_{\om,e}$ be from Definition \ref{D.1.3}. Let $v$ be from Lemma \ref{lem:bump} for $d=1$ and let $\tilde u_{\om,e}$ solve \eqref{genhomeq} with initial data
\[
\tilde u_{\om,e}(0,x) = v(\max\{x\cdot e,0\}). 
\]
Lemma \ref{lem:slowspread}, applied to \eqref{1.3}, and the comparison principle show that there is $t'=t'(f_0)\ge 0$ such that 
\[
u_{\om,e}(0,\cdot) \le \tilde u_{\om,e}(t',\cdot) \qquad\text{and}\qquad  \tilde u_{\om,e}(0,\cdot) \le u_{\om,e}(t',\cdot).
\]
Hence the comparison principle implies that the definition of the front speed in direction $e$ is unchanged if we use $\tilde u$ in place of $u$.  Let us do so.  

Since \eqref{eq:icsub} holds for $\tilde u_{\om,e}$ due to   Lemma \ref{lem:bump}, Theorem \ref{thm:bw} applies to $\tilde u_{\om,e}$ and yields $m_\eta,T_\eta$ independent of $\om,e$ because $\sigma_{\tilde u_{\om,e}}$ also does not depend on them.  We now let
\[
\tilde u_{\om,e}^{+\alpha}(t,x):= \tilde u_{\om,e}((1+ M\alpha m_{\theta/2}^{-1})t,x) + \alpha,
\]
with $\theta$ from \eqref{h:stat0} and $\alpha\in(0,\frac\theta 2]$.
Similarly to the previous proof, this function is now a supersolution to \eqref{genhomeq} on $(T_{\theta/2},\infty)\times\bbR^d$, and Lemma~\ref{lem:slowspread} shows that it satisfies
\[
\tilde u_{\om,e}^{+\alpha}(t'',\cdot)\ge u_{\om,e,\alpha}(0,\cdot),
\]
with the right-hand side from Definition \ref{D.1.5} and $t''=t''(f_0,\alpha)\ge T_{\theta/2}$.  It follows from this, the comparison principle,  the fact that $\tilde u_{\om,e}$ propagates with speed $c^*(e)$ in direction $e$ (in the sense of Definition~\ref{D.1.3}) for almost all $\om\in\Om$, and $\lim_{\alpha\to 0} (1+ M\alpha m_{\theta/2}^{-1})c^*(e)=c^*(e)$ that $c^*(e)$ is a deterministic exclusive  front speed in direction $e$ for \eqref{genhomeq} (again with $\beta_{K,e}(\alpha)=\alpha$ for all sufficiently small $\alpha>0$  once $K$ is arbitrary but fixed).
\end{proof}

\section{Existence of Spreading Speeds and the Wulff Shape}
\label{sec:sspeed}

In this section we will prove Theorems \ref{T.1.4}(i) and \ref{T.1.8}(i).
We will first establish existence of a deterministic spreading speed in each direction, and then upgrade this to existence of a Wulff shape.
The key  will be to define an appropriate ``first passage time'' 
for spreading in any fixed direction, an approach that has been used extensively in the discrete setting of first passage percolation (see \cite{kestennotes, kestenpaper, fppbook} and references therein).

We only need to prove Theorem \ref{T.1.8}(i).  Then Theorem \ref{T.1.4}(i) will follow from the relationship 
\begin{equation} \label{3.000}
u_\om(0,\cdot) \le \tilde u_\om(t',\cdot) \qquad\text{and}\qquad  \tilde u_\om(0,\cdot) \le u_\om(t',\cdot)
\end{equation}
for the solutions $u_\om$ (from Definition \ref{D.1.2}) and  $\tilde u_\om$, where the latter solves \eqref{genhomeq} with initial data
$\tilde u_\om(0,\cdot) = v$
(with $v$ from Lemma \ref{lem:bump}) and $t'=t'(f_0)$.  (As in the proof of Theorem~\ref{T.1.6}(i) above, this follows from Lemma \ref{lem:slowspread}, applied to \eqref{1.3}, and the comparison principle.)  This yields  $\lim_{t\to\infty} \frac 1t {L_{u_\om , \eta}(t)}=0$ for each $\eta>0$ (so we can apply Theorem \ref{T.1.8}(i)) because \eqref{def:bw} for $\tilde u$ (see Theorem \ref{thm:bw}) then implies  \eqref{def:bw} for  $u$.  We note that this last claim also needs the fact that any super-level set of $\tilde u$ expands with a uniformly-bounded-above speed, proved in \cite{andrejbd}.  Alternatively, one can perform the proof below for the solutions $\tilde u$ instead of $u$, since \eqref{def:bw} for $\tilde u$ yields $\lim_{t\to\infty} \frac 1t {L_{\tilde u_\om, \eta}(t)}=0$ for each $\eta>0$, and then notice that \eqref{3.000} shows that Definition \ref{D.1.2} is equivalent to itself with $\tilde u$ in place of $u$.

So let us now  prove Theorem \ref{T.1.8}(i), assuming hypothesis $H$ and $\lim_{t\to\infty} \frac 1t {L_{u_\om , \eta}(t)}=0$ for the solutions from H(ii) and each $\eta>0$ (Definition \ref{D.1.2} is again equivalent to itself with $\theta_0$ in place of $\frac{1+\theta_0}2$ when H(ii) is assumed).  First note that the second claim in Lemma \ref{lem:slowspread} holds:

\begin{lem} \label{L.3.11}
Assume Hypothesis H(i,ii).  There is $c=c(F, \theta_0,R_0)>0$ such that for each $\eta>0$ there is $\la_\eta=\la_\eta(F, \theta_0,R_0)\geq 0$ such that the following holds.  If $0\le u\le 1$ solves \eqref{1.10} for some $\om\in\Om$ and
\begin{equation*}
u(t_{1}, \cdot)\geq \theta_{0}\chi_{B_{R_0}(x)}
\end{equation*}
for some $(t_{1}, x)\in [0,\infty)\times \RR^{d}$, then for each $t\geq t_{1}+\la_\eta$ we have \eqref{2.1}.  In particular, for $\eta\in(0,\min\{1-\theta_0,\frac 1{R_0}\}]$ and  $t_1\ge 0$ we have
\begin{equation*}
B_{c(t-t_1)-L_{u_\om , \eta}(t_1)} \left( \left\{x\in\RR^d \,|\, u_\om(t_1, x)\geq \eta\right\} \right)
  \subseteq  \left\{x\in\RR^d \,|\, u_\om(t, x)\geq 1-\eta\right\}
\end{equation*}
whenever $t\geq t_{1}+ \max\{\la_\eta, \frac 1c L_{u_\om , \eta}(t_1)\}$.
\end{lem}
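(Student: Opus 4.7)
The plan is to imitate the structure of Lemma \ref{lem:slowspread}, powered entirely by Hypothesis H: the uniform spreading from a single bump (H(ii)), the stationarity of $F$ (H(iii)), and the comparison principle together with time-shift invariance of $\mathcal A$ (H(i)). I will first establish \eqref{2.1} via a two-scale procedure---propagating the $\theta_0$-level by discrete jumps of time $T$ and length $1$, and then upgrading from $\theta_0$ to $1-\eta$ on one final interval of length $T_\eta$---and then derive the ``in particular'' statement by combining this with the definition of $L_{u,\eta}$.

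The building block is the following shift principle. If $w$ solves \eqref{1.10} with some $\om\in\Om$ and $w(t_1,\cdot)\ge\theta_0\chi_{B_{R_0}(y)}$, then by H(iii) the function $\tilde w(s,z):=w(s+t_1,z+y)$ solves \eqref{1.10} with $\mscr{T}_y\om$ and $\tilde w(0,\cdot)\ge u_{\mscr{T}_y\om}(0,\cdot)$, so H(i) yields
\[
w(t_1+s,z+y)\ge u_{\mscr{T}_y\om}(s,z) \qquad\text{for all } (s,z)\in[0,\infty)\times\RR^d.
\]
Using H(ii), I choose $T=T(F,\theta_0,R_0)$ so that $\inf_{\om\in\Om}u_\om(T,z)\ge\theta_0$ for every $z\in B_{R_0+1}(0)$, set $c:=1/(2T)$, and for each $\eta>0$ choose $T_\eta$ with $\inf_{\om\in\Om}u_\om(s,0)\ge 1-\eta$ for all $s\ge T_\eta$, and set $\lambda_\eta:=2(T+T_\eta)$.

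An induction on $n\ge 0$ then gives the key propagation statement: if $u$ solves \eqref{1.10} with some $\om$ and $u(t_1,\cdot)\ge\theta_0\chi_{B_{R_0}(x)}$, then $u(t_1+nT,\cdot)\ge\theta_0\chi_{B_{R_0}(x')}$ for every $x'\in\RR^d$ with $|x'-x|\le n$. In the inductive step, given such $x'$, I pick any $x''$ on the segment $[x,x']$ with $|x''-x|\le n-1$ and $|x''-x'|\le 1$, apply the inductive hypothesis at $x''$, and then use the shift principle at $x''$ together with the defining property of $T$ to obtain $u(t_1+nT,\cdot)\ge\theta_0\chi_{B_{R_0+1}(x'')}\ge\theta_0\chi_{B_{R_0}(x')}$. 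To close \eqref{2.1}, given $t\ge t_1+\lambda_\eta$ and $y$ with $|y-x|\le c(t-t_1)$, I set $n:=\lfloor(t-t_1-T_\eta)/T\rfloor$; the choice $\lambda_\eta=2(T+T_\eta)$ ensures $n\ge c(t-t_1)\ge|y-x|$, so the propagation statement supplies $u(t_1+nT,\cdot)\ge\theta_0\chi_{B_{R_0}(y)}$, and a final application of the shift principle at $y$ with $t-t_1-nT\ge T_\eta$ gives $u(t,y)\ge u_{\mscr{T}_y\om}(t-t_1-nT,0)\ge 1-\eta$.

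For the ``in particular'' statement, fix $\eta\in(0,\min\{1-\theta_0,1/R_0\}]$ and any $x\in\{u_\om(t_1,\cdot)\ge\eta\}$. The definition of $L:=L_{u_\om,\eta}(t_1)$ supplies $x'$ with $|x'-x|\le L$ lying in $\{u_\om(t_1,\cdot)\ge 1-\eta\}^0_{1/\eta}$; since $1/\eta\ge R_0$, the definition of $A^0_r$ forces $B_{R_0}(x')\subseteq\{u_\om(t_1,\cdot)\ge 1-\eta\}$, and $1-\eta\ge\theta_0$ then yields $u_\om(t_1,\cdot)\ge\theta_0\chi_{B_{R_0}(x')}$. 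Applying the first assertion at $x'$, for every $t\ge t_1+\lambda_\eta$ and $|z-x'|\le c(t-t_1)$ one has $u_\om(t,z)\ge 1-\eta$; for any $z\in B_{c(t-t_1)-L}(x)$ the triangle inequality gives $|z-x'|\le c(t-t_1)$, which is the desired inclusion, while the second threshold $t-t_1\ge L/c$ only ensures that the left-hand ball is non-empty. I expect the main obstacle to be purely bookkeeping---calibrating $c$, $T$, $T_\eta$, and $\lambda_\eta$ so that the $\theta_0$-propagation reaches every $y$ with $|y-x|\le c(t-t_1)$ in the available time---and the uniform-in-$\om$ form of H(ii) is what makes the induction close, since after every discrete jump the shifted problem again sits in the same uniform regime.
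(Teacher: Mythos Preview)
Your proposal is correct and follows essentially the same approach as the paper's (very terse) proof: pick a time $T$ after which $u_\om\ge\theta_0$ on $B_{R_0+1}(0)$ uniformly in $\om$, iterate via comparison to propagate the $\theta_0$-level at speed $\sim 1/T$, then upgrade to $1-\eta$ on one final time interval; the second claim then follows from the definition of $L_{u_\om,\eta}$. One small note: you invoke H(iii) in your shift principle while the lemma as stated only lists H(i,ii), but the paper's proof has the same implicit dependency (the iteration requires comparing with solutions started from translated bumps), so this is a harmless imprecision in the lemma statement rather than a gap in your argument.
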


\begin{proof}
If $\tau<\infty$ is such that $\inf_{\om\in\Om\,\&\, |x|<R_0+1} u_\om(\tau,x)\ge\theta_0$, then the first claim holds with any $c<\tau^{-1}$.   The second claim follows.
\end{proof}

For any $(y,z,\om)\in \RR^{d}\times \RR^{d}\times \Om$, we now let
\begin{equation}\label{def:tau}
\tau(y, z, \om):=\inf\left\{ t\geq 0\,\mid\, u(t,\cdot, \om; y)\geq \theta_{0}\chi_{B_{R_{0}}(z)} \right\},
\end{equation}
where $u(\cdot,\cdot, \om; y)$ solves \eqref{1.10} with $u(0,\cdot,\om;y)=\theta_0\chi_{B_{R_0}(y)}$.  (By  H(iii), $u(t,x, \om; y)=u_{\mscr{T}_y\om}(t,x-y)$.)  So $\tau(y,z,\om)$ can be thought of as the time of spreading from $y$ to $z$.
Notice that our PDE being of second order forces us to use $\theta_{0}\chi_{B_{R_{0}}(y)}$ and $\theta_{0}\chi_{B_{R_{0}}(z)}$ to define $\tau$, rather than just pointwise information. We next establish some useful properties of  $\tau$.

\begin{lem}\label{lem:tauprop}
Assume Hypothesis H.  Then
the function $\tau$ from \eqref{def:tau} satisfies the following.
\begin{enumerate}[(i)]
\item Subadditivity: For any $y_{1}, y_{2}, y_{3}\in \RR^{d}$ we have
\begin{equation}\label{eq:tausubadd}
\tau(y_{1}, y_{3}, \om)\leq \tau(y_{1}, y_{2}, \om)+\tau(y_{2}, y_{3}, \om).
\end{equation}
\item Stationarity: For any $y_{1}, y_{2},z\in \RR^{d}$ we have 
\begin{equation}\label{eq:taustat}
\tau(y_{1}, y_{2}, \mscr{T}_{z}\om)=\tau(z+y_{1}, z+y_{2}, \om)
\end{equation}
\item Linear upper bound: There exists  $C=C(F, \theta_0,R_0)$ such that for any $x_{1}, x_{2}, y_{1}, y_{2}\in \RR^{d}$ we have
\begin{equation}\label{eq:taugenlip}
\left|\tau(x_{1}, y_{1}, \om)-\tau(x_{2}, y_{2}, \om)\right|\leq C(|x_{1}-x_{2}|+|y_{1}-y_{2}|+1).
\end{equation}
In particular, for any $x, y\in \RR^{d}$ we have
\begin{equation}\label{3.3}
\left|\tau(x, y, \om)\right|\leq C(|x-y|+1).
\end{equation}
\end{enumerate}
\end{lem}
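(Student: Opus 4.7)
The three parts of Lemma \ref{lem:tauprop} are structural consequences of Hypothesis H and standard comparison/invariance arguments. My plan is to handle (i) and (ii) by direct construction, and then deduce (iii) from (i) together with Lemma \ref{L.3.11}.

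For subadditivity (i), set $t_1:=\tau(y_1,y_2,\om)$ and $t_2:=\tau(y_2,y_3,\om)$. Define $v(t,x):=u(t+t_1,x,\om;y_1)$, which by the left-time-shift closure of $\mathcal A$ in H(i) is itself a solution of \eqref{1.10} with $\om$. By definition of $t_1$ its initial data satisfy $v(0,\cdot)\ge \theta_0\chi_{B_{R_0}(y_2)}=u(0,\cdot,\om;y_2)$, so the comparison principle in H(i) gives $v(t_2,\cdot)\ge u(t_2,\cdot,\om;y_2)\ge \theta_0\chi_{B_{R_0}(y_3)}$, which is precisely $u(t_1+t_2,\cdot,\om;y_1)\ge \theta_0\chi_{B_{R_0}(y_3)}$, yielding \eqref{eq:tausubadd}.

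For stationarity (ii), let $v(t,x):=u(t,x-z,\mscr T_z\om;y_1)$. Its initial datum is $\theta_0\chi_{B_{R_0}(y_1+z)}$, and the analog of \eqref{h:stat} for $F$ in H(iii) gives
\[
v_t(t,x)=F\bigl(D^2v,\nabla v,v,x-z,\mscr T_z\om\bigr)=F\bigl(D^2v,\nabla v,v,x,\om\bigr),
\]
so by uniqueness in H(i), $v(t,x)=u(t,x,\om;y_1+z)$. Equivalently $u(t,x,\mscr T_z\om;y_1)=u(t,x+z,\om;y_1+z)$. Applying this to the defining inequality in \eqref{def:tau}, the condition $u(t,\cdot,\mscr T_z\om;y_1)\ge \theta_0\chi_{B_{R_0}(y_2)}$ is the same as $u(t,\cdot,\om;y_1+z)\ge \theta_0\chi_{B_{R_0}(y_2+z)}$, giving \eqref{eq:taustat}.

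For the linear upper bound (iii), I will first establish \eqref{3.3} and then get \eqref{eq:taugenlip} from subadditivity. For \eqref{3.3}, fix any $\eta\in(0,1-\theta_0)$ and apply the first claim of Lemma \ref{L.3.11} to $u(\cdot,\cdot,\om;x)$ with $t_1=0$: for every $t\ge \la_\eta$ we have $u(t,y,\om;x)\ge 1-\eta\ge\theta_0$ whenever $|y-x|\le ct$. Choosing $t=\max\{\la_\eta,c^{-1}(|x-y|+R_0)\}$ covers the whole ball $B_{R_0}(y)$, yielding $\tau(x,y,\om)\le C(|x-y|+1)$ for some $C=C(F,\theta_0,R_0)$. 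Then \eqref{eq:tausubadd} applied to the chain $x_1\to x_2\to y_2\to y_1$ (and its reverse) gives
\[
|\tau(x_1,y_1,\om)-\tau(x_2,y_2,\om)|\le \tau(x_1,x_2,\om)+\tau(y_2,y_1,\om)\le C(|x_1-x_2|+|y_1-y_2|+2),
\]
which absorbs the additive $2$ into a new constant, proving \eqref{eq:taugenlip}. The only substantive input beyond basic PDE machinery is the uniform-in-$\om$ propagation estimate of Lemma \ref{L.3.11}, so I expect no real obstacle; care is only needed to invoke the correct ingredients of Hypothesis H (time-shift closure, comparison, uniqueness, and the spatial stationarity analog) in parts (i) and (ii).
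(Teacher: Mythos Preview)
Your proof is correct and follows essentially the same approach as the paper. The only cosmetic difference is in part (iii): the paper proves \eqref{eq:taugenlip} directly (using Lemma~\ref{L.3.11} to go $x_1\to x_2$, then the definition of $\tau$ to go $x_2\to y_2$, then Lemma~\ref{L.3.11} again to go $y_2\to y_1$) and deduces \eqref{3.3} as the special case $x_2=y_2=x$, whereas you prove \eqref{3.3} first and then invoke subadditivity explicitly to chain $x_1\to x_2\to y_2\to y_1$; these are the same argument in a different order.
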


%

\begin{proof}
(i) follows from the comparison principle and (ii) from H(iii) (specifically, \eqref{h:stat}).  To prove (iii), by symmetry we only need to show
\begin{equation}\label{eq:1sidelipic}
\tau(x_{1}, y_{1}, \om)\leq \tau(x_{2}, y_{2}, \om)+C\left(|x_{1}-x_{2}|+|y_{1}-y_{2}|+1\right).
\end{equation}
By Lemma \ref{L.3.11}, there is $C=C(F, \theta_0,R_0)$ such that 
\begin{equation*}
u(C(|x_{1}-x_{2}|+1), \cdot, \om; x_{1})\geq \theta_{0}\chi_{B_{R_{0}}(x_{2})}.
\end{equation*}
Then the comparison principle and the definition of $\tau$ yield
\begin{equation*}
u(C(|x_{1}-x_{2}|+1)+\tau(x_{2},y_{2}, \om),\cdot, \om; x_{1})\geq \theta_{0}\chi_{B_{R_{0}}(y_{2})},
\end{equation*}
and after another application of \eqref{2.1} we obtain
\begin{equation*}
u(C(|x_{1}-x_{2}|+|y_{1}-y_{2}|+2)+\tau(x_{2},y_{2},\om), \cdot, \om; x_{1})\geq \theta_{0}\chi_{B_{R_{0}}(y_{1})}. 
\end{equation*}
Inequality \eqref{eq:1sidelipic} now follows after doubling $C$, and \eqref{3.3} is its special case with $x_1=x$, $y_1=y$, and $x_2=y_2=x$ because $\tau(x, x, \om)=0$.
\end{proof}

Using these properties and \eqref{h:erg}, we next show that for any $e\in \mathbb{S}^{d-1}$,
\begin{equation*}
\lim_{n\rightarrow\infty} \frac{\tau(0, ne, \om)}{n}
\end{equation*}
exists and is constant on a full measure subset of $\Om$. We also establish boundedness and Lipschitz continuity of this limit as a function of $e$.

\begin{lem}\label{lem:setapp}
Assume Hypothesis H.  Then for each $e\in \mathbb{S}^{d-1}$, there exists a constant $\overline{\tau}(e)$ and a set $\Om(e)\subseteq \Om$ with $\PP[\Om(e)]=1$ such that for each $\om\in \Om(e),$ we have
\begin{equation} \label{3.2}
\lim_{n\rightarrow\infty} \frac{\tau(0, ne, \om)}{n}=\overline{\tau}(e).
\end{equation}
Moreover, there is  $C=C(F, \theta_0,R_0)$ such that for any $e,e'\in \mathbb{S}^{d-1}$ we have
(with $c$ from Lemma \ref{L.3.11} and $c'$ from Lemma \ref{L.4.2}) 
\begin{equation}\label{eq:taubarbnds}
c\leq \frac 1{\overline{\tau}(e)}\leq  c'
\end{equation}
and
\begin{equation}\label{eq:taubarlip}
\max\left\{ \left| \overline{\tau}(e) - \overline{\tau}(e') \right|, \left| \frac 1{\overline{\tau}(e)} - \frac 1{\overline{\tau}(e')} \right| \right\} \leq C|e-e'|.
\end{equation}
\end{lem}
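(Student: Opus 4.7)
The plan is to prove existence of $\overline{\tau}(e)$ via Kingman's subadditive ergodic theorem applied to the sequence $g_n(\omega) := \tau(0, ne, \omega)$, with the measure-preserving transformation $\tilde{\mathscr{T}} := \mathscr{T}_e$. I would first check the hypotheses of Theorem \ref{thm:set}: subadditivity follows by combining \eqref{eq:tausubadd} and \eqref{eq:taustat}, which give
\[
g_{m+n}(\omega) \le \tau(0, me, \omega) + \tau(me, (m+n)e, \omega) = g_m(\omega) + g_n(\mathscr{T}_{me}\omega);
\]
integrability $g_n \in L^1(\Om)$ and the lower-integral bound are immediate from \eqref{3.3} and nonnegativity of $\tau$. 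Kingman's theorem then produces an $L^1$ function $g$ with $\frac{1}{n} g_n \to g$ almost surely.

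Next, I need to upgrade the obvious $\mathscr{T}_e$-invariance of $g$ to invariance under the full group $\{\mathscr{T}_y\}_{y \in \RR^d}$, so that \eqref{h:erg} forces $g$ to be a.s.\ constant. For any fixed $y \in \RR^d$, stationarity and \eqref{eq:taugenlip} yield
\[
\bigl| \tau(0, ne, \mathscr{T}_y \omega) - \tau(0, ne, \omega) \bigr| = \bigl| \tau(y, y+ne, \omega) - \tau(0, ne, \omega) \bigr| \le 2C(|y|+1),
\]
so dividing by $n$ shows $g(\mathscr{T}_y \omega) = g(\omega)$ almost surely. A standard argument (restricting to a countable dense set $\{y_j\}$ and using Fubini with the continuous action of $\mathscr{T}_y$) then shows $g$ is invariant under the full group on a full-measure set, and ergodicity gives the constant $\overline{\tau}(e)$ and the set $\Om(e)$ where \eqref{3.2} holds.

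For the bounds \eqref{eq:taubarbnds}, the upper bound $\overline{\tau}(e) \le c^{-1}$ follows directly from Lemma \ref{L.3.11} with $\eta$ chosen so that $1-\eta \ge \theta_0$: for $t \ge \lambda_\eta + n/c$ we have $u(t, \cdot, \om; 0) \ge \theta_0 \chi_{B_{R_0}(ne)}$, hence $\tau(0, ne, \om) \le n/c + \lambda_\eta$. For the lower bound, I would apply Lemma \ref{L.4.2} with $u$ the solution from $\theta_0 \chi_{B_{R_0}(0)}$, $u' \equiv 0$, and balls $B_r(z)$ centered at each $z \in B_{R_0}(ne)$ disjoint from $B_{R_0}(0)$; this gives $u(t, z) \le c' e^{-m'(|z|-R_0-c't)}$, so $u(t, z) \ge \theta_0$ on any such $z$ forces $t \ge n/c' - C_1$, yielding $\overline{\tau}(e) \ge 1/c'$.

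Finally, the Lipschitz estimate \eqref{eq:taubarlip} follows from \eqref{eq:taugenlip}: for any $\om \in \Om(e) \cap \Om(e')$ (a full-measure set),
\[
|\overline{\tau}(e) - \overline{\tau}(e')| = \lim_{n \to \infty} \frac{|\tau(0, ne, \om) - \tau(0, ne', \om)|}{n} \le \lim_{n \to \infty} \frac{C(n|e-e'|+1)}{n} = C|e-e'|,
\]
and the Lipschitz bound for $1/\overline{\tau}$ follows by combining this with the lower bound $\overline{\tau} \ge 1/c'$. I expect the main subtlety to be the constancy step: the subadditive ergodic theorem is naturally adapted to the single transformation $\mathscr{T}_e$, whose $\mathbb{Z}$-action need not be ergodic on $(\Om, \mathcal F, \PP)$, so one must invoke the $C$-Lipschitz dependence of $\tau$ on shifts to promote $\mathscr{T}_e$-invariance of the limit into $\mathscr{T}_y$-invariance for all $y \in \RR^d$ before applying \eqref{h:erg}.
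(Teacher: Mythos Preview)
Your proposal is correct and follows essentially the same approach as the paper: Kingman's theorem applied to $g_n(\omega)=\tau(0,ne,\omega)$ with $\tilde{\mathscr{T}}=\mathscr{T}_e$, then the Lipschitz bound \eqref{eq:taugenlip} to promote $\mathscr{T}_e$-invariance of the limit to full $\{\mathscr{T}_y\}$-invariance before invoking \eqref{h:erg}, followed by the same derivations of \eqref{eq:taubarbnds} and \eqref{eq:taubarlip}. The only cosmetic difference is in the constancy step: rather than your countable-dense-set/Fubini device, the paper simply enlarges the convergence set $\Omega'(e)$ to the translation-invariant set $\bigcup_{y\in\RR^d}\mathscr{T}_y\Omega'(e)$ and extends $\overline{\tau}(e,\cdot)$ to it via the same bound $|\tau(0,ne,\mathscr{T}_y\omega)-\tau(0,ne,\omega)|\le 2C(|y|+1)$, which is a bit cleaner but equivalent in substance.
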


\begin{proof}
Fix $e\in \mathbb{S}^{d-1}$.  Lemma \ref{lem:tauprop}(i) shows that for  $m,n\geq 0$ we have
\begin{equation*}
\tau(0, (m+n)e, \om)\leq \tau(0, me, \om)+\tau(me, (m+n)e, \om).
\end{equation*}
Lemma \ref{lem:tauprop}(ii) shows that for $\mscr{T}'_{s}:=\mscr{T}_{se}$ for $s\in \RR$, 
\begin{equation*}
\tau(me, (m+n)e, \om)=\tau(0, ne, \mscr{T}_{me}\om)=\tau(0, ne, \mscr{T}'_{m}\om).
\end{equation*}
Also \eqref{3.3} shows that 
\begin{equation}\label{eq:taubdd}
\tau(me, ne, \om)\leq C\left(|n-m|+1\right)
\end{equation}
for some $C=C(F, \theta_0,R_0)$.
These statements and Theorem \ref{thm:set} with $g_{n}(\om):=\tau(0, ne, \om)$ and $\tilde{\mscr{T}}:=\mscr{T}_{e}$ now yield existence of
\begin{equation*}
\overline{\tau}(e, \om):=\lim_{n\rightarrow\infty}\frac{\tau(0, ne, \om)}{n} \, \in L^1(\Om)
\end{equation*}
for each $\om$ in some full measure set $\Om'(e)\subseteq\Om$.

In order to show that $\overline{\tau}(e, \om)$ is constant on a full measure subset of $\Om'(e)$, we need to use the ergodicity hypothesis \eqref{h:erg}. Note that the last statement in Theorem \ref{thm:set} does not apply directly as $\{\tilde{\mscr{T}}^{m}\}_{m\in \mathbb Z}$ need not be ergodic. By Lemma \ref{lem:tauprop}(ii,iii), for any $(y,\om)\in \RR^{d}\times\Om'(e)$ we have
\[
\left|\tau(0, ne, \mscr{T}_{y}\om)-\tau(0, ne, \om)\right| =\left|\tau(y, y+ne, \om)-\tau(0, ne, \om)\right| 
\leq 2C(|y|+1),
\]
so that
\begin{equation*}
\lim_{n\rightarrow\infty}\frac{\left|\tau(0, ne, \mscr{T}_{y}\om)-\tau(0, ne, \om)\right|}{n}= 0.
\end{equation*}
It then follows (after enlarging $\Om'(e)$ to the translation invariant set $\bigcup_{y\in \RR^d} \mscr{T}_{y}\Om'(e)$  and extending $\overline\tau(e,\cdot)$ to it)  that
\begin{equation*}
\overline{\tau}(e, \mscr{T}_{y}\om)=\overline{\tau}(e, \om)
\end{equation*}
for any $(y,\om)\in \RR^{d}\times\Om'(e)$.
Now  \eqref{h:erg} implies that $\overline{\tau}(e, \om)$ is a constant $\overline{\tau}(e)$ on some full measure set  $\Om(e)\subseteq\Om'(e)$.

The lower bound in \eqref{eq:taubarbnds} follows immediately Lemma \ref{L.3.11} and the upper bound from Lemma \ref{L.4.2} (since $\theta_{0}\chi_{B_{R_{0}}}$ is compactly supported), because these yield for each $\om\in\Om$ and with $o(1)=o(n^0)$,
\[
\frac 1{c'}+o(1)\le \frac{\tau(0, ne, \om)}n \le \frac 1{c}+o(1).
\]
Finally,  \eqref{eq:taugenlip} shows that
\begin{equation*}
\left|\tau(0,ne, \om)-\tau(0, ne', \om)\right|\leq C\left(n|e-e'|+1\right),
\end{equation*}
and by taking $\om\in \Om(e)\cap \Om(e')$ and then $n\to\infty$ we obtain
\[
\left|\overline{\tau}(e)-\overline{\tau}(e')\right|\leq C|e-e'|.
\]
Then \eqref{eq:taubarlip} follows from this and \eqref{eq:taubarbnds}.
\end{proof}

This result, together with Lemma \ref{L.3.11}, suggests that  $u_\om=u(\cdot,\cdot,\om;0)$ should have spreading speed 
\begin{equation}\label{eq:ssrep}
w(e):=\frac{1}{\overline{\tau}(e)} \in [c,c']
\end{equation}
in the direction $e\in \mathbb{S}^{d-1}$ (see Remark 1 after Definition \ref{D.1.2}) whenever $\om\in\Om(e)$  and $\lim_{t\to\infty} \frac 1t {L_{u_\om , \eta}(t)}=0$ for each $\eta>0$.
Moreover, if the convergence in \eqref{3.2} is uniform in $e$, then the super-level sets of $u$ should (after a scaling by $t$) acquire the Wulff shape 
\eqref{eq:wulffdef} as $t\to\infty$.  We will next show that this indeed happens in the case at hand.

\begin{proof}[Proof of Theorem \ref{T.1.8}(i)]
Let $\Om(e)$ be the set from Lemma \ref{lem:setapp} for any $e\in\mathbb S^{d-1}$, and assume without loss that $\lim_{t\to\infty} \frac 1t {L_{u_\om , \eta}(t)}=0$  for each $\eta>0$ and $\om\in \Om(e)$ (otherwise restrict $\Om(e)$ to such $\om$).  

Let  $Q$ be a countable dense subset of $\mathbb{S}^{d-1}$ and define
\begin{equation}\label{eq:Om0def}
\Om':=\bigcap_{e\in Q} \Om(e),
\end{equation}
so that $\PP[\Om']=1$.
We will prove that for each $\om\in\Om'$, $\delta\in(0,\frac 13)$, and $\eta'\in(0,1)$ we have
\begin{equation} \label{3.4}
(1-2\delta)\mathcal{S}t \subseteq \{ x\in\RR^d\,|\, u_\om(t,x)\ge \eta'  \} \subseteq (1+3\delta)\mathcal{S}t
\end{equation}
for all large enough $t$, which yields the claim with $\Omega_0:=\Om'$.  

Fix such $\om,\delta$ and any $\eta\in(0,\min\{1-\theta_0,\frac 1{R_0}\}]$. Let $t_{\delta,\eta}$ be such that 
\begin{equation} \label{3.333}
\max \left\{\lambda_\eta, \frac {L_{u_\om , \eta}(t)}c \right\}\le \delta' t \qquad \text{for all $t\ge (1-\delta')t_{\delta,\eta}$},
\end{equation}
where $c,\lambda_\eta$ are from Lemma \ref{L.3.11} and $\delta':=\min \{  \frac \delta{c'(C+2)} , \frac{\delta c}{4c'} \}$  (with $C$ from \eqref{eq:taubarlip}).
  Let $Q'\subseteq Q$ be finite and satisfying $\mathbb{S}^{d-1} \subseteq \bigcup_{e\in Q'} B_{\delta'}(e)$, and let $N\in\mathbb N$ be such that
\[
\sup_{e\in Q' \,\&\, n\ge N} \left|\frac{\tau(0, ne, \om)}{n}-\overline{\tau}(e)\right|\leq \delta'
\]
and 
\[
\sup_{e\in Q' \,\&\, n\le N} \tau(0, ne, \om) \le N (\overline{\tau}(e)+ \delta').
\]

Consider any  $t\ge \max\{N (c^{-1}+\delta'),t_{\delta,\eta}\}$.  From $(C+2)\delta'\le \frac \delta{c'}$, \eqref{eq:taubarbnds}, and \eqref{eq:taubarlip} we have 
\[
\bar\tau(e)-\bar\tau(e')+(1-\delta)\delta'\le \delta\bar\tau(e)
\]
 and thus $\frac{1-\delta}{\bar\tau(e')}\le \frac1{\bar\tau(e)+\delta'}$ whenever $|e-e'|\le\delta'$. This and $\frac{1-\delta}{\bar\tau(e')}t \le c't$ show that $(1-\delta)\mathcal{S}t$ is a subset of the union of the balls $B_{\delta' c't+1}(ne)$ with vectors $e\in Q'$ and integers $n\in[0, \frac t{\bar\tau(e)+\delta'}]$.  From \eqref{3.333} and the first claim in Lemma \ref{L.3.11} it follows that
 \[
 u_\om(s,\cdot)\ge (1-\eta)\chi_{B_{\delta' c't+1}(ne)}
 \]
 for each such ball and any $s\ge \tau(0,ne,\om)+c^{-1}(\delta' c't+1)$.  But  \eqref{eq:taubarbnds}, our choice of $N$, and $\delta\ge 2c^{-1}\delta' c'$ show that $s:=(1+\delta)t$ satisfies this for each such ball as long as $t$ is large enough, hence
 \[
  u_\om((1+\delta)t,\cdot)\ge (1-\eta)\chi_{(1-\delta)\mathcal{S}t}
 \]
 for such $t$.
 Since this holds for any small $\eta>0$ and $(1-2\delta)(1+\delta)\le 1-\delta$,
the first inclusion in \eqref{3.4} follows for any $\eta'\in(0,1)$ and all large $t$.

For the second inclusion, assume there is $s\ge \max\{N (c^{-1}+\delta'),t_{\delta,\eta}\}$ and $x\in\RR^d\setminus (1+3\delta)\mathcal{S}s$ such that $u_\om(s,x)\ge\eta$.  Then  there are $e'\in\mathbb S^{d-1}$ and   $t \ge \frac s{1-\delta}$ 
such that $x=\frac{1+\delta}{\bar\tau(e')}te'$ (because $\frac{1+\delta}{1-\delta}\le1+3\delta$ for $\delta\in(0,\frac 13)$). 
 From $(C+2)\delta'\le \frac \delta{c'}$, \eqref{eq:taubarbnds}, and \eqref{eq:taubarlip} we have 
\[
\bar\tau(e')-\bar\tau(e)+(1+\delta)\delta'\le \delta\bar\tau(e)
\]
 and thus $\frac{1+\delta}{\bar\tau(e')}\ge \frac1{\bar\tau(e)-\delta'}$ whenever $|e-e'|\le\delta'$.  This and $\frac{1+\delta}{\bar\tau(e')}t\le 2c't$ show that $\partial(1+\delta)\mathcal{S}t$ is a subset of the union of the balls $B_{2\delta' c't+1}(ne)$ with vectors $e\in Q'$ and integers $n> \frac t{\bar\tau(e)-\delta'}$.  Hence $x\in B_{2\delta' c't+1}(ne)$ for one such ball.  From $u_\om(s,x)\ge\eta$, \eqref{3.333} and the second claim in Lemma \ref{L.3.11} it follows that
 \[
 u_\om(s',\cdot)\ge (1-\eta)\chi_{B_{R_0}(ne)}
 \]
 for each  $s'\ge s+c^{-1}(3\delta' c't+1+R_0)$.
 From $\delta\ge 4c^{-1}\delta' c'$ and $t \ge \frac s{1-\delta}$ it follows that $s':=t$ satisfies this  as long as $s$ is large enough, and so $\tau(0,ne,\om)\le t$.  But \eqref{eq:taubarbnds} and our choice of $N$ show that this contradicts $n> \frac t{\bar\tau(e)-\delta'}$.  We must therefore have $u_\om(s,x)<\eta$ for all $x\in\RR^d\setminus (1+3\delta)\mathcal{S}s$ whenever $s$ is large enough.  Since this holds for any small enough $\eta>0$,  the second inclusion in \eqref{3.4} follows for any   $\eta'\in(0,1)$. 
 
 This proves the first claim in (i).  The second claim follows from Lemma \ref{L.3.4} below and from the part of Theorem \ref{T.1.8}(ii) corresponding to Theorem \ref{T.1.4}(ii) (which we will prove in Section \ref{s:fs}).
\end{proof}

We now show that, in fact, a stronger version of existence of the Wulff shape holds, including certain uniformity of the relevant limits with respect to shifts of the initial data.  This will be crucial in the proof of existence of deterministic front speeds in the next section.

\begin{prop}\label{P.3.3}
Assume Hypothesis H.  If  \eqref{1.10} has a deterministic Wulff shape  $\mathcal{S}$,
then there is $\Om_0\subseteq \Om$
with $\PP[\Om_0]=1$ such that for any $\om\in \Om_0$ and $\La,\delta>0$, 
the solutions $u(t,x,\om;y):=u_{\mscr{T}_y\om}(t,x-y)$ to  \eqref{1.10} 
(for which $u(0,\cdot,\om;y)=\theta_0\chi_{B_{R_0}(y)}$) satisfy 
\begin{align}
\lim_{t\rightarrow\infty} \inf_{|y|\leq \La t} \, \inf_{x\in (1-\delta)\mathcal{S}t}u(t,x+y, \om; y) & =1, \label{eq:lbsxi}
\\ \lim_{t\rightarrow\infty} \sup_{|y|\leq \La t} \, \sup_{x\notin (1+\delta)\mathcal{S}t}u(t,x+y, \om; y) & =0.  \label{eq:ubsxi}
\end{align}
\end{prop}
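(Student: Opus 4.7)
The plan is to upgrade the proof of Theorem~\ref{T.1.8}(i) from pointwise-in-$y$ to uniform-in-$y$ convergence over $|y|\le\La t$. I first note that the full-measure set $\Om'$ constructed in that proof is translation-invariant and $\bar\tau$ is deterministic, so $\tau(0,ne,\mscr{T}_y\om)/n\to\bar\tau(e)$ already holds for every $y\in\RR^d$ and $\om\in\Om'$; the task is purely to make this convergence uniform in $y$.

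For each rational $\epsilon>0$ and each $e$ in a countable dense $Q\subseteq\mathbb S^{d-1}$, Egorov's theorem applied to the a.s.\ convergence $\tau(0,ne,\cdot)/n\to\bar\tau(e)$ produces a set $A_\epsilon^e\subseteq\Om$ with $\PP[A_\epsilon^e]\ge 1-\epsilon$ and an integer $N_\epsilon^e$ such that $|\tau(0,ne,\om')/n-\bar\tau(e)|\le\epsilon$ whenever $\om'\in A_\epsilon^e$ and $n\ge N_\epsilon^e$. Fixing a finite $\delta'$-net $Q'\subseteq Q$ of $\mathbb S^{d-1}$ and applying Wiener's ergodic theorem (Theorem~\ref{thm:et}) to the countable family $\{\chi_{A_\epsilon^e}\}$, one gets a full-measure set $\Om_0\subseteq\Om'$ on which, for each $\om\in\Om_0$, the densities
\[
\frac{|B_r(0)\cap G_\epsilon^e(\om)|}{|B_r(0)|}, \qquad G_\epsilon^e(\om):=\{z\in\RR^d:\mscr{T}_z\om\in A_\epsilon^e\},
\]
tend to $\PP[A_\epsilon^e]\ge 1-\epsilon$ as $r\to\infty$. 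To prove \eqref{eq:lbsxi}, given $|y|\le\La t$ and $x\in(1-\delta)\mathcal S t$, approximate $x$ by a vector $x'=kNe$ with $e\in Q'$, and use subadditivity and stationarity (Lemma~\ref{lem:tauprop}(i,ii)) to get
\[
\tau(y,y+x',\om)\le\sum_{j=0}^{k-1}\tau(0,Ne,\mscr{T}_{y+jNe}\om).
\]
Summands at ``good'' base points (those with $\mscr{T}_{y+jNe}\om\in A_\epsilon^e$) contribute at most $N(\bar\tau(e)+\epsilon)$, and the rest are bounded by $CN$ via Lemma~\ref{lem:tauprop}(iii). A tubular thickening of the string $\{y+jNe\}$ by cross-section $\ell=o(N)$, combined with the Lipschitz bound used to shift each base point to a nearby good one at cost $O(\ell)$, reduces the one-dimensional density question along the string to the $d$-dimensional density already controlled above. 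This yields $\tau(y,y+x',\om)\le t(1+O(\delta))$ uniformly in $y$, and Lemma~\ref{L.3.11} then delivers $u_{\mscr{T}_y\om}\ge 1-\eta$ on $(1-2\delta)\mathcal S t$ at time $t$, uniformly in $|y|\le\La t$.

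For the reverse inclusion \eqref{eq:ubsxi} one needs the lower bound $\tau(y,y+ne,\om)\ge n(\bar\tau(e)-\delta')$, which subadditivity cannot provide directly. If it failed at some $y$ with $|y|\le\La t$, then the Lipschitz property (Lemma~\ref{lem:tauprop}(iii)) would force the same inequality to fail at every $y'\in B_{n\delta'/(2C)}(y)$, so that $\mscr{T}_{y'}\om\notin A_{\delta'/4}^e$ for all such $y'$ once $n\ge 4C/\delta'$. This produces a bad shift set of volume $\gtrsim(n\delta')^d$ inside $B_{2\La t}$; taking $n\approx t/\bar\tau(e)$, its ratio to $|B_{\La t}|$ is $\gtrsim(\delta'/\La\bar\tau(e))^d$, which can be made to exceed $\epsilon$ by choosing $\epsilon$ small relative to $\delta'/\La$. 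This contradicts the Wiener density estimate from the previous paragraph once $t$ is large, giving the required lower bound and hence \eqref{eq:ubsxi}. The principal obstacle throughout is the calibration of $(\epsilon,\delta',N,\ell)$ so that the Lipschitz transfer of a single violation generates enough bad volume to contradict the density estimate; this is the PDE analogue of the uniform shape theorem in first-passage percolation.
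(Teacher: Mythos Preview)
Your approach differs substantially from the paper's, and the string/tubular argument you propose for \eqref{eq:lbsxi} has a real gap.

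The paper does not return to $\tau$ at all. Since the existence of the deterministic Wulff shape is a hypothesis, it applies Egorov's theorem directly to the convergence \eqref{eq:lbsxi'}--\eqref{eq:ubsxi'}: for each $m\in\NN$ there is $D_{m,\eta}\subseteq\Om$ with $\PP[D_{m,\eta}]\ge 1-2^{-md-1}$ on which that convergence is uniform in $t$. Wiener's theorem then gives a full-measure $\Om_0$ on which the density of $\{z:\mscr{T}_z\om\in D_{m,\eta}\}$ in $B_{2\La t}$ exceeds $1-2^{-md}$ for large $t$. Hence for any $|y|\le\La t$ there is a \emph{single} $z$ with $|z-y|\le 2^{1-m}\La t$ and $\mscr{T}_z\om\in D_{m,\eta}$ (otherwise $B_{2^{1-m}\La t}(y)\subseteq B_{2\La t}$ would be entirely bad, contradicting the density bound). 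The uniform Wulff-shape estimate at $z$ is then transferred to $y$ via Lemma~\ref{L.3.11}, at cost $O(2^{-m}\La t)$ in both space and time, which is absorbed into $\delta$ by taking $m$ large. This single-shift transfer handles both \eqref{eq:lbsxi} and \eqref{eq:ubsxi} symmetrically and avoids any string decomposition.

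Your tubular thickening argument for \eqref{eq:lbsxi} does not close. Wiener's theorem controls the density of bad shifts only in balls $B_r(0)$ as $r\to\infty$, not along one-dimensional strings or inside tubes of bounded cross-section. With $N=N_\epsilon^e$ the fixed Egorov threshold and $\ell=o(N)$ bounded, a ball $B_\ell(y+jNe)$ that is entirely bad contributes volume $|B_\ell|=O(1)$, while the ambient ball has volume $\sim(\La t)^d$; the ratio tends to $0$, so no contradiction with the density bound arises, and you cannot conclude that every (or even most) base points admit a nearby good shift. To make the volume comparison work you would need $\ell$ of order $t$, which forces $N$ of order $t$ and $k$ bounded --- i.e., the argument collapses to a single-shift transfer. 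Your \eqref{eq:ubsxi} argument is in fact exactly this single-shift idea (a violation at $y$ propagates by Lipschitz to a ball of radius $\sim\delta' n\sim\delta' t$, whose volume \emph{is} comparable to $|B_{\La t}|$); applying the same reasoning to the upper deviation of $\tau$ would repair your \eqref{eq:lbsxi} argument and essentially recover the paper's proof, modulo the cosmetic difference of working with $\tau$ rather than with the Wulff-shape convergence directly. The conflation of the accuracy parameter and the measure defect in your use of Egorov (both called $\epsilon$) is then a minor bookkeeping issue: decouple them and choose the measure defect small relative to $(\delta'/\La)^d$.
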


{\it Remark.}  Comparison principle shows that if \eqref{eq:lbsxi} and \eqref{eq:ubsxi} hold for some $\om$, then they also hold for $\mscr{T}_{z}\om$ for any $z\in\RR^d$.  Hence $\Om_0$ can be chosen to be translation invariant.

\begin{proof}
Let $\Omega'\subseteq\Om$ be such that $\mathbb P[\Om']=1$ and \eqref{eq:lbsxi} and \eqref{eq:ubsxi} hold for any $\om\in\Om'$ and $\La=0$ (i.e., only for $y=0$).  We will extend this to any $\La$ via a combination of Egorov's theorem and Wiener's ergodic theorem. 

For  $(\om,t,\eta)\in\Om\times[0,\infty)\times (0,1)$ let
\[
\Gamma_{\om,t, \eta}:=\{ x\in\RR^d\,|\, u(t,x,\om;0)\ge \eta \}.
\]
By  \eqref{eq:lbsxi} and \eqref{eq:ubsxi} for any $\om\in\Om'$ and $y=0$, and by Egorov's Theorem, for each $m\in\mathbb N$ there are $\tau_{m,\eta}\ge 1$ and $D_{m, \eta}\subseteq \Om$ with $\PP[D_{m, \eta}]\ge 1-2^{-md-1}$ such that for each $\om\in D_{m, \eta}$ and $t\geq \tau_{m, \eta}$, 
\begin{equation*}
(1-2^{-m})\calS t \subseteq \Ga_{\om,t, \eta} \subseteq  (1+2^{-m})\calS t.
\end{equation*}
Theorem \ref{thm:et} shows that there exists a set $\Om_{m, \eta}\subseteq \Om$ with $\PP[\Om_{m,\eta}]=1$ such that for each $\om\in \Om_{m, \eta}$, 
\begin{equation*}
\lim_{r\rightarrow\infty}\aint_{B_{r}}\chi_{D_{m, \eta}}(\mscr{T}_{y}\om)\, dy=\PP[D_{m, \eta}]\geq 1-2^{-md-1}.
\end{equation*}
Hence for each $\om\in \Om_{m,\eta}$ and $\La>0$, there is $r_{\om, \La,m, \eta}\ge 2\tau_{m, \eta}$ such that 
\begin{equation*}
\left| \left\{ y\in B_{2t\La}(0) \,\mid\, \mscr{T}_{y}\om\in D_{m, \eta} \right\} \right| \geq (1-2^{-md}) \left|B_{2t\La}(0) \right|
\end{equation*}
for all $t\geq r_{\om, \La,m, \eta}$, with $\left|\cdot \right|$ the Lebesgue measure. 

Fix any  $\La>0 $, $\om\in \Om_{m,\eta}$, and $t\geq r_{\om, \La,m, \eta}$, and let $y\in B_{\La t}(0)$ be arbitrary.  Then there is $z\in B_{2\La t}(0)$ such that $|z-y|\leq 2^{1-m} \La t$ and $\mscr{T}_{z}\om\in D_{m,\eta}$.  From the first claim in Lemma \ref{L.3.11} we have that 
\[
u \left( 2c^{-1} 2^{1-m}\La t, \cdot,  \mscr{T}_{y}\om;0 \right) \ge  \theta_0 \chi_{B_{R_{0}}(z-y)}
\]
and
\[
u \left( 2c^{-1} 2^{1-m}\La t, \cdot,  \mscr{T}_{z}\om;0 \right) \ge  \theta_0\chi_{B_{R_{0}}(y-z)}
\]
provided $c^{-1} 2^{1-m}\La t \ge \max\{ \la_{1-\theta_0}, c^{-1}{R_{0}} \}$ (which holds for all large $t$). 
But then from $\mscr{T}_{z}\om\in D_{m, \eta}$, $r_{\om, \La,m,\eta}\ge2 \tau_{m,\eta}$, and \eqref{eq:ssrep} we obtain
\begin{equation*}
\Ga_{\mscr{T}_{y}\om,t, \eta} \supseteq \Ga_{\mscr{T}_z\om,  (1-2c^{-1} 2^{1-m}\La)t, \eta} +(z-y) \supseteq [(1-2^{-m})(1-2c^{-1} 2^{1-m}\La)-c^{-1}2^{1-m} \La ] \calS t
\end{equation*}
and
\begin{equation*}
\Ga_{\mscr{T}_{y}\om,t, \eta} \subseteq \Ga_{\mscr{T}_z\om,  (1+2c^{-1} 2^{1-m}\La)t, \eta} +(z-y) \subseteq [(1+2^{-m})(1+2c^{-1} 2^{1-m}\La)+c^{-1}2^{1-m} \La ] \calS t
\end{equation*} 
for all $m\ge \log_2\frac{8\La}c$ (so that $(1-2c^{-1} 2^{1-m}\La)t\ge \tau_{m,\eta}$), any $\om\in\Om_{m, \eta}$, any large enough $t$ (depending on $F,\theta_0,R_0,\Lambda, m,\eta,\om$),
and any $y\in B_{\La t}(0)$.  Then for any $\delta,\La>0$, $\eta\in\mathbb Q\cap(0,1)$, and $\omega\in\Omega_0:=\bigcap_{\eta\in \mathbb{Q}\cap (0,1)}\bigcap_{m=1}^{\infty} \Om_{m, \eta}$ (so that $\PP[\Om_{0}]=1$) we obtain
\[
(1-\delta) \calS t \subseteq \Gamma_{\mscr{T}_{y}\om,t, \eta}=\{ x\in\RR^d\,|\, u(t,x+y,\om;y)\ge \eta \} \subseteq (1+\delta) \calS t 
\]
for all large enough $t$ and any $y\in B_{\La t}(0)$ (by first choosing $m$ above large enough, depending on $c,\La,\delta$). Since this holds for any $\eta\in\mathbb Q\cap (0,1)$,  \eqref{eq:lbsxi} and \eqref{eq:ubsxi} follow for any $\delta,\La>0$ and $\om\in\Om_0$.
%
\end{proof}

A similar argument for deterministic (exclusive) front speeds yields the following result.  Recall Definitions \ref{D.1.3} and \ref{D.1.5}, with the former having $u_{\om,e}(0,\cdot) = \theta_0\chi_{\{x\cdot e< 0\}}$ in the setting of Hypothesis H.

\begin{prop}\label{P.3.5}
Assume Hypothesis H. 

(i) If  \eqref{1.10} has a deterministic front speed $c^*(e)$ in direction $e\in\mathbb S^{d-1}$,
then there is $\Om_e\subseteq \Om$
with $\PP[\Om_e]=1$ such that for any $\om\in \Om_e$, $\La>0$, and compact $K\subseteq  \{x\cdot e>0\}\subseteq\bbR^d$,
the solutions $u(t,x,\om;y,e):=u_{\mscr{T}_y\om,e}(t,x-y)$ to  \eqref{1.10} 
(for which $u(0,\cdot,\om;y,e)=\theta_0\chi_{\{x\cdot e< y\cdot e\}}$) satisfy 
\begin{align*}
\lim_{t\to \infty}  \inf_{|y|\le \La t}\, \inf_{x\in (c^*(e)e-K)t} u(t,x+y,\om;y,e) &= 1,  
\\ \lim_{t\to \infty}  \sup_{|y|\le \La t}\, \sup_{x\in (c^*(e)e+K)t} u(t,x+y,\om;y,e) &= 0,
\end{align*}

(ii) If  \eqref{1.10} has a deterministic exclusive front speed $c^*(e)$ in direction $e\in\mathbb S^{d-1}$,
then there is $\Om_e'\subseteq \Om$
with $\PP[\Om_e']=1$ and for any compact $K\subseteq  \{x\cdot e>0\}\subseteq\bbR^d$ there is $\beta_{K,e}':(0,1]\to(0,1]$ with $\lim_{\alpha\to 0}\beta_{K,e}'(\alpha)=0$ such that 
the solutions $u(t,x,\om;y,e,\alpha):=u_{\mscr{T}_y\om,e,\alpha}(t,x-y)$ to  \eqref{1.10} 
(for which $u(0,\cdot,\om;y,e,\alpha)=\chi_{\{x\cdot e< y\cdot e\}} + \alpha \chi_{\{x\cdot e\ge y\cdot e\}}$) satisfy for each  $\om\in \Om_e'$ and $\La,\alpha>0$, 
\[
 \limsup_{t\to \infty}  \sup_{|y|\le \La t}\, \sup_{x\in (c^*(e)e+K)t} u(t,x+y,\om;y,e,\alpha) \le\beta_{K,e}'(\alpha).
\]
\end{prop}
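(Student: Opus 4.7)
The plan is to mirror the three-step scheme behind the proof of Proposition~\ref{P.3.3}: Egorov's theorem promotes the $y=0$ limits of Definitions~\ref{D.1.3} and~\ref{D.1.5} to uniform convergence on $\om'$-sets $D_{m,j}\subseteq\Om$ of probability $\ge 1-2^{-md-1}$; Theorem~\ref{thm:et} applied to $\chi_{D_{m,j}}$ then produces a full-measure set on which the translates $z$ with $\mscr{T}_z\om\in D_{m,j}$ have spatial density $\ge 1-2^{-md}$ in $B_{2\La t}$; and Lemma~\ref{L.3.11} together with the comparison principle transfers the bounds from a nearby good translate $\mscr{T}_z\om$ to the target $\mscr{T}_y\om$.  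The indexing $(m,j)$ runs over a precision parameter $m\in\NN$ and a countable exhaustion $\{K_j\}$ of $\{x\cdot e>0\}$ by compact subsets (with, for part (ii), also a dense countable subset of $\alpha\in(0,1]$, patched via monotonicity of $\alpha\mapsto u_{\om,e,\alpha}$).  Setting $\Om_e:=\bigcap_{m,j}\Om_{m,j}$ and applying the volume-counting argument from Proposition~\ref{P.3.3}, for any $y\in B_{\La t}$ and $t$ large there is some $z$ with $|y-z|\le 2^{1-m}\La t$ and $\mscr{T}_z\om\in D_{m,j}$.

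For part (i), both $u(\cdot,\cdot,\om;y,e)$ and $u(\cdot,\cdot,\om;z,e)$ solve the same $\om$-indexed PDE with initial data that are translates along $e$ of $\theta_0\chi_{\{x\cdot e<0\}}$.  Covering the smaller of the two half-spaces by a packing of $R_0$-balls and applying Lemma~\ref{L.3.11} (in the form of the remark after Lemma~\ref{L.4.2}) produces a time $s\le c^{-1}|y\cdot e-z\cdot e|+O(1)\le c^{-1}2^{1-m}\La t+O(1)$ for which $u(s,\cdot,\om;z,e)\ge\theta_0\chi_{\{x\cdot e<y\cdot e\}}=u(0,\cdot,\om;y,e)$, and symmetrically with $y,z$ interchanged.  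The comparison principle then sandwiches $u(t,\cdot,\om;y,e)$ between two time-shifts of $u(\cdot,\cdot,\om;z,e)=u_{\mscr{T}_z\om,e}(\cdot,\cdot-z)$; absorbing the resulting $O(2^{-m}\La)$ shifts of the evaluation point into a slight enlargement of $K_j$ (still strictly inside $\{x\cdot e>0\}$ for $m$ large relative to $K_j,\La,c,c^*(e)$), and then sending $m\to\infty$, yields \eqref{1.97} and \eqref{1.98} uniformly in $|y|\le\La t$.

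Part (ii) follows the same outline, but the upper-bound comparison is genuinely more delicate: when $y\cdot e>z\cdot e$ the initial data $u(0,\cdot,\om;y,e,\alpha)$ equals $1$ on the strip $\{z\cdot e\le x\cdot e<y\cdot e\}$, so no finite-time propagate of $u(\cdot,\om;z,e,\alpha)$, which always remains strictly below $1$, can pointwise dominate it there.  My proposed workaround is to invoke the bound for $u(\cdot,\om;z,e,2\alpha)$ rather than $u(\cdot,\om;z,e,\alpha)$ (enabled by the $\alpha$-indexing of $D_{m,j}$) and to combine Lemma~\ref{L.3.11} with $\eta\to 0$---producing $u(s,\cdot,\om;z,e,2\alpha)\ge(1-\eta)\chi_{\{x\cdot e<y\cdot e\}}+2\alpha\chi_{\{x\cdot e\ge y\cdot e\}}$ for $s$ proportional to $|y\cdot e-z\cdot e|+\la_\eta$---with Lemma~\ref{L.4.2} applied in the far field to absorb the residual pointwise gap on the strip as an exponentially small error.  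Since the evaluation point $x+y$ with $x\in(c^*(e)e+K)t$ lies at distance $\ge (\mathrm{dist}(K,\{x'\cdot e\le 0\}))t$ from the strip, for $m$ large relative to $K$ and the Lemma~\ref{L.4.2} constants this error becomes $o(1)$, and setting $\beta_{K,e}'(\alpha):=\inf_{K'\supsetneq K}\beta_{K',e}(2\alpha)$ then produces a function with $\lim_{\alpha\to 0}\beta_{K,e}'(\alpha)=0$.  The principal technical obstacle is precisely coordinating these scales: Lemma~\ref{L.4.2}'s crude exponent involves the speed $c'=2\sqrt{M}d$, which may be much larger than $c^*(e)$, so the $o(1)$ decay of the error forces $K$ to be shrunk by $O(2^{-m})$ at each stage and the limits $t\to\infty$, $m\to\infty$, and $\alpha\to 0$ must be taken in a carefully coordinated order---feasible because the given $K$ is compact in the open half-space, but requiring a bootstrap that uses the exclusive-front-speed hypothesis itself to iterate the comparison once and upgrade $c'$ to a sharper effective propagation speed if necessary.
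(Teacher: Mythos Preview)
Your treatment of part~(i) is correct and essentially identical to the paper's: Egorov on the $\La=0$ limits, Wiener's ergodic theorem applied to $\chi_{D_{m,j}}$, and then the two-sided sandwich via Lemma~\ref{L.3.11} and comparison, exactly as in Proposition~\ref{P.3.3}.

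For part~(ii), you have correctly spotted that the time-shift comparison breaks down at level~$1$: one cannot make $u(s,\cdot,\om;z,e,\alpha)\ge 1$ on the strip $\{z\cdot e\le x\cdot e<y\cdot e\}$ when $y\cdot e>z\cdot e$. But your proposed workaround has two genuine gaps. First, the claimed lower bound $u(s,\cdot,\om;z,e,2\alpha)\ge 2\alpha$ on $\{x\cdot e\ge y\cdot e\}$ is \emph{not} furnished by Hypothesis~H (constant~$\alpha$ need not be a subsolution), so you cannot conclude the far-field ordering. Second, even granting that ordering, Lemma~\ref{L.4.2} only yields $u(t,x+y,\om;y,e,\alpha)\le u(t{+}s,x+y,\om;z,e,2\alpha)+c'e^{-m'(r-c't)}$ with $r\approx (c^*(e)+\delta_K)t$, and since in general $c'>c^*(e)+\delta_K$ the error does not decay; your ``bootstrap to upgrade $c'$'' is not specified and I do not see how to make it rigorous.

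The fix is much simpler and is what the paper's terse ``analogous'' covers: in the Wiener density step, choose the good translate $z$ to \emph{also} satisfy $z\cdot e\ge y\cdot e$. Since the bad set in $B_{2\La t}$ has measure at most $2^{-md}|B_{2\La t}|=|B_{2^{1-m}\La t}|$, the half-ball $B_{2^{2-m}\La t}(y)\cap\{z'\cdot e\ge y\cdot e\}$ (of measure $2^{d-1}|B_{2^{1-m}\La t}|$) must contain such a $z$; the resulting bound $|z-y|\le 2^{2-m}\La t$ is absorbed by enlarging $K$ to $K_j$ exactly as before. With $z\cdot e\ge y\cdot e$ the initial data are ordered at time~$0$,
\[
u(0,\cdot,\om;y,e,\alpha)=\chi_{\{x\cdot e<y\cdot e\}}+\alpha\chi_{\{x\cdot e\ge y\cdot e\}}\ \le\ \chi_{\{x\cdot e<z\cdot e\}}+\alpha\chi_{\{x\cdot e\ge z\cdot e\}}=u(0,\cdot,\om;z,e,\alpha),
\]
so comparison gives $u_{\mscr{T}_y\om,e,\alpha}(t,x)\le u_{\mscr{T}_z\om,e,\alpha}(t,x+(y-z))$ for all $t$, and for $x\in(c^*(e)e+K)t$ the shifted point lies in $(c^*(e)e+K_j)t$ once $m$ is large. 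This gives $\beta'_{K,e}(\alpha)=\beta_{K_j,e}(\alpha)$ directly, with no time-shift, no Lemma~\ref{L.4.2}, and no bootstrap. (Your remark about Egorov over a countable set of $\alpha$'s and patching by monotonicity is correct and needed to obtain a single $\Om_e'$.)
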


{\it Remark.}  The sets $\Om_e,\Om_e'$ can again be chosen  translation invariant.

\begin{proof}
(i) For each $j\ge 1$, let $K_j\subseteq  \{x\cdot e>0\}$ be a  compact  such that $K_j\subseteq K_{j+1}$ and $\bigcup_{j\ge 1} K_j=\{x\cdot e>0\}$.  For  $(\om,t,\eta)\in\Om\times[0,\infty)\times (0,1)$ let
\[
\Gamma_{\om,t, \eta}:=\{ x\in\RR^d\,|\, u(t,x,\om;0,e)\ge \eta \}.
\]
As in the proof of Proposition \ref{P.3.3}, using that the claim holds for $\La=0$, we can find a full-measure set $\Om_e\subseteq\Om$ such that the following holds  for any $\om\in\Om_e$, $\La>0$, $\eta\in\mathbb Q\cap(0,1)$, $j\ge 1$, and  $m\ge \log_2\frac{8\La}c$: for any large enough $t$ and any $y\in B_{\La t}(0)$, there is $z\in B_{2\La t}(0)$ such that $|z-y|\leq 2^{1-m} \La t$ and 
\begin{equation*}
\Ga_{\mscr{T}_{y}\om,t, \eta} \supseteq \Ga_{\mscr{T}_z\om,  (1-2c^{-1} 2^{1-m}\La)t, \eta} +(z-y) \supseteq 
\left[(c^*(e)e-K_j)(1-2c^{-1} 2^{1-m}\La)t \right]^0_{2^{1-m}\La t}
\end{equation*}
and
\begin{equation*}
\Ga_{\mscr{T}_{y}\om,t, \eta} \subseteq \Ga_{\mscr{T}_z\om,  (1+2c^{-1} 2^{1-m}\La)t, \eta} +(z-y) \subseteq 
\RR^d\setminus \left[ (c^*(e)e+K_j)(1+2c^{-1} 2^{1-m}\La)t \right]^0_{2^{1-m}\La t}
\end{equation*}
Given any compact $K\subseteq  \{x\cdot e>0\}$, it now suffices to take $j$ such that $K\subseteq K_j^0$ and large enough $m$ (depending on $c,\La,K,j$) so that
\[
\left[(c^*(e)e-K_j)(1-2c^{-1} 2^{1-m}\La) \right]^0_{2^{1-m}\La } \supseteq c^*(e)e-K
\]
and
\[
\left[(c^*(e)e+K_j)(1+2c^{-1} 2^{1-m}\La) \right]^0_{2^{1-m}\La } \supseteq c^*(e)e+K.
\]
Indeed, since
\[
\Gamma_{\mscr{T}_{y}\om,t, \eta}=\{ x\in\RR^d\,|\, u(t,x+y,\om;y,e)\ge \eta \},
\]
taking $t\to\infty$ then yields
\begin{align*}
\liminf_{t\to \infty}  \inf_{|y|\le \La t}\, \inf_{x\in (c^*(e)e-K)t} u(t,x+y,\om;y,e) &\ge \eta,  
\\ \limsup_{t\to \infty}  \sup_{|y|\le \La t}\, \sup_{x\in (c^*(e)e+K)t} u(t,x+y,\om;y,e) &\le \eta
\end{align*}
for any $\om\in\Om_e$, $\La>0$, $\eta\in\mathbb Q\cap(0,1)$, and compact $K\subseteq  \{x\cdot e>0\}$.  The result follows.

(ii)  This is analogous, with $\beta_{K,e}':=\beta_{K_j,e}$ for the chosen $j$ (where $\beta_{K_j,e}$ is from Definition \ref{D.1.5}).
\end{proof}

Finally, we show that $w$ is continuous, and it is constant if $F$ in \eqref{1.10} (or $f$ in \eqref{genhomeq}) is isotropic.

\begin{lem} \label{L.3.4}
Assume Hypothesis H and 
let $\mathcal{S}$ be from \eqref{eq:wulffdef}, with $w(e)$ from \eqref{eq:ssrep} for each $e\in \mathbb{S}^{d-1}$.
Then $w$ is Lipschitz,
and if $F$ is isotropic, then $w(e)\equiv w$ is constant and $\calS=B_w(0)$.
\end{lem}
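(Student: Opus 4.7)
The Lipschitz claim is immediate from the preceding lemmas. Indeed, by \eqref{eq:ssrep} we have $w(e)=1/\overline\tau(e)$, and the second bound in \eqref{eq:taubarlip} gives $|w(e)-w(e')|\le C|e-e'|$ for all $e,e'\in\mathbb S^{d-1}$.  So the only substantive point is the isotropy statement.

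For this, fix $\mathscr R\in SO(d)$ and the solution $u(\cdot,\cdot,\om;0)\in\mathcal A$ to \eqref{1.10} with initial data $\theta_0\chi_{B_{R_0}(0)}$.  I will show that
\begin{equation}\label{eq:rotidsketch}
u(t,x,\sigma_{\mathscr R}\om;0)=u(t,\mathscr R x,\om;0) \qquad\text{for all } (t,x)\in [0,\infty)\times\RR^d.
\end{equation}
To see this, let $v(t,x):=u(t,\mathscr R^{-1}x,\sigma_{\mathscr R}\om;0)$.  Since $\mathscr R$ is a rotation, the chain rule gives $\nabla_x v(t,x)=\mathscr R\,\nabla_y u(t,y,\sigma_{\mathscr R}\om;0)|_{y=\mathscr R^{-1}x}$ and $D^2_x v(t,x)=\mathscr R\,D^2_y u(t,y,\sigma_{\mathscr R}\om;0)|_{y=\mathscr R^{-1}x}\mathscr R^T$.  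Substituting these into the PDE satisfied by $u(\cdot,\cdot,\sigma_{\mathscr R}\om;0)$ and then applying the isotropy relation $F(X,p,u,x,\sigma_{\mathscr R}\om)=F(\mathscr R X\mathscr R^T,\mathscr R p,u,\mathscr R x,\om)$ to cancel the $\mathscr R$-factors, one finds that $v$ solves \eqref{1.10} for $\om$.  The initial datum is preserved because $\chi_{B_{R_0}(0)}$ is rotation invariant.  Uniqueness in $\mathcal A$ (Hypothesis H(i)) then forces $v(t,x)=u(t,x,\om;0)$, which is \eqref{eq:rotidsketch}.

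Combined with the definition \eqref{def:tau}, the identity \eqref{eq:rotidsketch} yields
\[
\tau(0,ne,\sigma_{\mathscr R}\om)=\tau(0,n\mathscr Re,\om)
\]
for every $n>0$, $e\in\mathbb S^{d-1}$, $\mathscr R\in SO(d)$, and $\om\in\Om$ (using $\mathscr R B_{R_0}(ne)=B_{R_0}(n\mathscr Re)$).  Let $\Om(e)$ be the full-measure set from Lemma \ref{lem:setapp}; since $\sigma_{\mathscr R}$ is measure preserving, $\sigma_{\mathscr R}^{-1}(\Om(e))\cap\Om(\mathscr Re)$ also has full measure, and for any $\om$ in it both
\[
\lim_{n\to\infty}\frac{\tau(0,n\mathscr Re,\om)}{n}=\lim_{n\to\infty}\frac{\tau(0,ne,\sigma_{\mathscr R}\om)}{n}=\overline\tau(e) \qquad\text{and}\qquad \lim_{n\to\infty}\frac{\tau(0,n\mathscr Re,\om)}{n}=\overline\tau(\mathscr Re)
\]
hold.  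Hence $\overline\tau(\mathscr Re)=\overline\tau(e)$ for every $\mathscr R\in SO(d)$ and $e\in\mathbb S^{d-1}$, which means $\overline\tau$ (and therefore $w=1/\overline\tau$) is constant on $\mathbb S^{d-1}$.  Plugging this into \eqref{eq:wulffdef} gives $\mathcal S=B_w(0)$ for that constant value $w$.

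The only mildly delicate step is the verification that $v$ lies in the uniqueness class $\mathcal A$ and that the chain-rule computation correctly matches the isotropy identity; both are straightforward because rotations are smooth bijections that preserve the regularity encoded in $\mathcal A$ and the Hessian/gradient transform in the way required by the form of the isotropy hypothesis.
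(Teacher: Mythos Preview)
Your proof is correct and follows essentially the same route as the paper's. The paper simply asserts that isotropy gives $\tau(0,nRe',\om)=\tau(0,ne',\sigma_R\om)$ and then passes to the limit, whereas you supply the details behind that identity (the rotation identity \eqref{eq:rotidsketch} for solutions via uniqueness in $\mathcal A$) and handle the full-measure sets explicitly; the underlying argument is the same.
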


\begin{proof}
%
%

The first claim follows from \eqref{eq:taubarlip}.

If $F$ is isotropic, then for any $e, e'\in\mathbb{S}^{d-1}$, there is  $R\in SO(d)$ such that $e=Re'$. From \eqref{3.2} and isotropy we obtain 
\[
\overline{\tau}(e)=\lim_{n\rightarrow\infty} \frac{\tau(0, nRe', \om)}{n}=\lim_{n\rightarrow\infty} \frac{\tau(0, ne', \sigma_R\om)}{n}=\overline{\tau}(e')
\]
for almost all $\om\in\Om$.  Therefore $w(e)\equiv w$ is constant and $\calS=B_w(0)$.
\end{proof}

\section{From Spreading Speeds to Front Speeds and Homogenization for Convex Initial Sets}\label{s:fs}

It turns out that validity of Proposition \ref{P.3.3} for a fixed $\om$ is sufficient for our argument yielding existence of front speeds under relevant hypotheses.
Let us therefore consider the  PDE
\begin{equation} \label{4.0}
u_{t}= F(D^2 u,\nabla u,u,x)  \qquad\text{on  $(t,x)\in(0,\infty)\times\bbR^d$,}
\end{equation}
and its $\ve$-spacetime-scaled version
\begin{equation} \label{4.0e}
u^{\ve}_{t} = \ve^{-1} F\left(\ve^2 D^2 u,\ve\nabla u,u, \ve^{-1} x\right)  \qquad\text{on  $(t,x)\in(0,\infty)\times\bbR^d$.}
\end{equation}
We do not include $\om$ in \eqref{4.0}, which represents \eqref{1.10} for any fixed $\om\in\Om_0$, where $\Om_0$ is a full-measure set such that Proposition \ref{P.3.5} holds (with $\omega$-independent $\mathcal S$).
This is the starting point of this section, along with some other basic properties 
(cf. Hypothesis H in the introduction).


\medskip

{\bf Hypothesis H'.}  (i) Let \eqref{4.0} have a unique solution in some class of functions $\mathcal A\subseteq L^1_{\rm loc}((0,\infty)\times\bbR^d)$ for each locally BV initial condition $0\le u(0,\cdot)\le 1$, with this solution being constant 0 resp.~1 when $u(0,\cdot)\equiv 0$ resp.~$1$. (Below we only consider these solutions.)  Assume also that left time-shifts of solutions (restricted to $(0,\infty)\times\bbR^d$) are solutions from $\mathcal A$ and that \eqref{4.0} satisfies the (parabolic) comparison principle within $\mathcal A$.

(ii) Lemma \ref{L.4.2} holds for solutions to \eqref{4.0}, and there are $\theta_0<1$ and $R_0<\infty$ such that solutions $u(\cdot,\cdot;y)$ to \eqref{4.0} with $u(0,\cdot;y)=\theta_0\chi_{B_{R_0}(y)}$ for $y\in\bbR^d$ satisfy locally uniformly in $x\in\bbR^d$,
\[
\lim_{t\to\infty}\inf_{y\in\bbR^d} u(t,x+y;y)=1.
\] 

(iii) The PDE \eqref{4.0} has a {\it strong Wulff shape} $\calS$, satisfying \eqref{eq:wulffdef} with a continuous $w:\mathbb S^{d-1}\to(0,\infty)$, in the following strong sense:  for each $\Lambda,\delta>0$,  
the solutions from (ii) satisfy
\begin{align}
\lim_{t\rightarrow\infty} \inf_{|y|\leq \La t} \, \inf_{x\in (1-\delta)\mathcal{S}t+y}u(t,x; y) & =1 \label{4.14}
\\ \lim_{t\rightarrow\infty} \sup_{|y|\leq \La t} \, \sup_{x\notin (1+\delta)\mathcal{S}t+y}u(t,x; y) & =0. \label{4.15}
\end{align}
\medskip

{\it Remarks.}  
1.  Of course, as in Section \ref{sec:sspeed} above,
(ii) shows that nothing would change if in the case of \eqref{genhomeq} we instead considered solutions $\tilde u$ with initial data $\tilde u(0,\cdot;y) = v(\cdot-y)$ in (ii).
\smallskip

2. Results from the previous sections show that Hypothesis H' holds for \eqref{genhomeq} with any $\om\in\Om_0$, where $\Om_0$ is from Proposition \ref{P.3.5}, with $\theta_0$ being any number greater than $\theta_0$ from \eqref{h:stat0} (and with, for instance, $\mathcal A=\bigcap_{\tau>0} C^{1+\gamma,2+\gamma}((\tau,\infty)\times\bbR^d)$).  The analysis below therefore also applies to \eqref{genhomeq} with any $\om\in\Om_0$. (Nevertheless, below we will consider initial data with value $\theta_0+\alpha$ on some sets, with $\alpha>0$ so that our arguments also directly apply to \eqref{genhomeq} with $\theta_0$ from \eqref{h:stat0}.)  
\smallskip


3.  Similarly to H'(iii) above, we will consider here the analog of Proposition \ref{P.3.5} instead of just Definitions \ref{D.1.3} and \ref{D.1.5}. That is, we will say that $c^*(e)$ is a {\it strong front speed}  in direction $e\in\mathbb S^{d-1}$ for \eqref{4.0} if for each compact $K\subseteq  \{x\cdot e>0\}\subseteq\bbR^d$,
the solutions $u(\cdot,\cdot;y,e)$ to  \eqref{4.0} with initial data 
$u(0,\cdot;y,e)=\theta_0\chi_{\{x\cdot e< y\cdot e\}}$ satisfy for each $\La>0$,
\begin{align*}
\lim_{t\to \infty}  \inf_{|y|\le \La t}\, \inf_{x\in (c^*(e)e-K)t} u(t,x+y;y,e) &= 1,  
\\ \lim_{t\to \infty}  \sup_{|y|\le \La t}\, \sup_{x\in (c^*(e)e+K)t} u(t,x+y;y,e) &= 0.
\end{align*}
And if also for each compact $K\subseteq  \{x\cdot e>0\}$ there is $\beta_{K,e}:(0,1]\to(0,1]$ with $\lim_{\alpha\to 0}\beta_{K,e}(\alpha)=0$ such that the solutions $u(\cdot,\cdot;y,e,\alpha)$ with initial data $u(0,\cdot;y,e,\alpha)=\chi_{\{x\cdot e< y\cdot e\}} + \alpha \chi_{\{x\cdot e\ge y\cdot e\}}$ satisfy for each  $\La,\alpha>0$, 
 \[
 \limsup_{t\to \infty}  \sup_{|y|\le \La t}\, \sup_{x\in (c^*(e)e+K)t} u(t,x+y;y,e,\alpha) \le\beta_{K,e}(\alpha),
\]
then $c^*(e)$ will be a {\it strong exclusive front speed}  in direction $e$ for \eqref{4.0}.
\smallskip

We will next use these properties to obtain results about solutions with more general initial data, but first we will show that $\mathcal S$ is convex.  Below we will use the notation $B_r:=B_r(0)\subseteq\bbR^d$.

\begin{lem} \label{L.4.0}
If Hypothesis H' holds, then
$\mathcal{S}$ is convex.
\end{lem}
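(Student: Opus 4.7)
The plan is to show that $\overline{\mathcal S}$ is convex, which will suffice: since $w$ is continuous and positive, we have $\mathcal S=\{y\in\bbR^d\setminus\{0\}\,|\,|y|<w(y/|y|)\}\cup\{0\}=\mathrm{int}(\overline{\mathcal S})$, and the interior of a convex set is convex. Since $\mathcal S$ is dense in $\overline{\mathcal S}$, it is enough to prove that for every $y_1,y_2\in\mathcal S$ and every $s\in(0,1)$, the point $sy_1+(1-s)y_2$ belongs to $\overline{\mathcal S}$ (the cases $y_1=0$ or $y_2=0$ are trivial because $\mathcal S$ is star-shaped with respect to the origin).

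Fix such $y_1,y_2,s$, set $\Lambda:=s|y_1|/(1-s)+1$, and let $\delta\in(0,\tfrac12)$ be arbitrary. I will run the solution $u(\cdot,\cdot;0)$ from H'(ii) in two stages. At time $sT$, applying \eqref{4.14} with this $\Lambda$ at the particular translation $y=0$ gives $u(sT,\cdot;0)\ge 1-\delta\ge\theta_0$ on $(1-\delta)sT\,\mathcal S$, for all $T$ large. Since $y_1$ is an interior point of $\mathcal S$ and $R_0$ is a fixed constant while $sT\to\infty$, the ball $B_{R_0}(sTy_1)$ is contained in $(1-\delta)sT\,\mathcal S$ for all sufficiently large $T$. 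Hence $u(sT,\cdot;0)\ge\theta_0\chi_{B_{R_0}(sTy_1)}=u(0,\cdot;sTy_1)$, and the comparison principle from H'(i) yields
\[
u(sT+t,x;0)\ge u(t,x;sTy_1)\qquad\text{for all }(t,x)\in[0,\infty)\times\bbR^d.
\]

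Next, I apply \eqref{4.14} to the family $u(\cdot,\cdot;sTy_1)$ at time $t_2:=(1-s)T$ with the same $\Lambda$. By the choice of $\Lambda$ we have $|sTy_1|\le\Lambda(1-s)T=\Lambda t_2$ for all $T\ge 0$, so \eqref{4.14} yields, for $T$ large,
\[
u((1-s)T,x;sTy_1)\ge 1-\delta\qquad\text{for every }x\in sTy_1+(1-\delta)(1-s)T\,\mathcal S.
\]
Taking $x:=sTy_1+(1-\delta)(1-s)Ty_2$, which lies in this set because $y_2\in\mathcal S$, and combining the two displays gives $u(T,x;0)\ge 1-\delta$. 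On the other hand, \eqref{4.15} applied to $u(\cdot,\cdot;0)$ with (any) $\Lambda'>0$ and translation $y=0$ yields $u(T,x;0)\le\delta$ whenever $x\notin(1+\delta)T\,\mathcal S$ and $T$ is large; consequently $x\in(1+\delta)T\,\mathcal S$, i.e.,
\[
sy_1+(1-\delta)(1-s)y_2\in(1+\delta)\mathcal S.
\]
Letting $\delta\to 0$ produces $sy_1+(1-s)y_2\in\overline{\mathcal S}$, completing the argument.

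The only mildly delicate point is the inclusion $B_{R_0}(sTy_1)\subseteq(1-\delta)sT\,\mathcal S$ for large $T$, which is nothing more than the observation that $y_1$ is interior to a fixed open set that is then dilated by the unboundedly large factor $sT$. Everything else is bookkeeping of the error parameter $\delta$ across the two stages; the essential ingredient is the uniformity in the translation $y$ provided by H'(iii), without which the second stage could not be launched from the off-origin base point $sTy_1$.
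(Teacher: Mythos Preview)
Your proof is correct and follows essentially the same two-stage spreading argument as the paper: use \eqref{4.14} to spread from the origin to a neighborhood of $sTy_1$, restart there via comparison, apply \eqref{4.14} again (exploiting the uniformity in $y$) to reach $sTy_1+(1-\delta)(1-s)Ty_2$, and conclude with \eqref{4.15}. The paper parameterizes by boundary points $w(e)e,\,w(e')e'$ and proves the inequality $w\bigl(\tfrac{sw(e)e+(1-s)w(e')e'}{|sw(e)e+(1-s)w(e')e'|}\bigr)\ge |sw(e)e+(1-s)w(e')e'|$ directly, while you work with interior points and pass through $\overline{\mathcal S}$, but this is only a cosmetic difference. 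One very minor slip: the inclusion $B_{R_0}(sTy_1)\subseteq(1-\delta)sT\,\mathcal S$ needs $y_1/(1-\delta)\in\mathcal S$, which can fail for $\delta$ near $\tfrac12$ when $y_1$ is close to $\partial\mathcal S$; since you let $\delta\to 0$ this is harmless, but ``let $\delta\in(0,\tfrac12)$ be arbitrary'' should read ``let $\delta>0$ be sufficiently small (depending on $y_1$)''.
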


\begin{proof}
We need to show that
\begin{equation} \label{3.25}
w\left(\frac{sw(e)e+(1-s)w(e')e'}{|sw(e)e+(1-s)w(e')e'|}\right)\geq \left|sw(e)e+(1-s)w(e')e'\right|
\end{equation}
for any $e,e'\in\mathbb S^{d-1}$ and $s\in(0,1)$.  From \eqref{4.14} and $\mathcal{S}$ being open, we obtain for any $\delta>0$, 
\[
\lim_{t\to\infty} \inf_{|y|\le\delta^{-1}} u(st,(1-\delta)stw(e)e+y,\om;0) =1,
\]
as well as (with $y_t:=(1-\delta)stw(e) e$ and $\La$  large enough)
\[
\lim_{t\to\infty} u((1-s)t,(1-\delta)(1-s)tw(e')e'+y_t,\om;y_t)=1.
\]
This and the comparison principle imply
\[
\lim_{t\to\infty} u(t,(1-\delta)[(1-s)w(e')e'+sw(e)e]t,\om;0)=1,
\]
so that  \eqref{4.15} yields \eqref{3.25} after taking $\delta\to 0$.
\end{proof}

We next prove a ``lower bound'' on the region where $u^\ve\approx 1$ in the homogenization regime, allowing also for some dependence of initial data on $\ve>0$.  (The following results will be stated in terms of  \eqref{4.0e}.)

\begin{thm}  \label{T.3.6}
Assume Hypothesis H'.
Let  $A\subseteq\bbR^d$ be open and for $\ve>0$ and $y\in \bbR^d$ let $u^\ve(\cdot,\cdot;y)$ solve \eqref{4.0e}.  If   $\alpha,\lambda>0$ and
\[
u^\ve(0,\cdot;y)\ge (\theta_0+\alpha)\chi_{A+y},
\]
then 
\[
\lim_{\ve\to 0}  \inf_{|y|\le \lambda} u^\ve(t,x+y;y)=1 
\]
locally uniformly on $\Theta^{A,\mathcal{S}}:=\{(t,x)\in(0,\infty)\times\bbR^d \,|\, x\in A+t\mathcal{S} \}$.
\end{thm}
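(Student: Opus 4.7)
The plan is to reduce the claim to Hypothesis H'(iii) via comparison with solutions starting from balls of radius $R_0$, exploiting openness of both $A$ and $\mathcal S$. Roughly: locally near any $(t_0,x_0)\in\Theta^{A,\mathcal S}$, write $x_0 = a_0 + t_0 z_0$ with $a_0\in A$ and $z_0\in\mathcal S$; then shrink both $A$ at $a_0$ and $\mathcal S$ at $z_0$ slightly, pass to the unscaled equation \eqref{4.0} (in which the initial set $(A+y)/\ve$ contains balls of radius $\gg R_0$ centered at $(a_0+y)/\ve$), and invoke \eqref{4.14}.

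In detail, fix $(t_0,x_0)\in\Theta^{A,\mathcal S}$ and write $x_0=a_0+t_0z_0$ with $a_0\in A$ and $z_0\in\mathcal S$. Since $A$ and $\mathcal S$ are open, choose $r,\delta>0$ so that $B_r(a_0)\subseteq A$ and $z_0\in(1-2\delta)\mathcal S$. By continuity of $(t,x)\mapsto(x-a_0)/t$, there is an open neighborhood $N\ni(t_0,x_0)$ on which $t\ge t_0/2$ and $(x-a_0)/t\in(1-\delta)\mathcal S$. Next, set $v^\ve_y(s,\xi):=u^\ve(\ve s,\ve\xi;y)$, so that $v^\ve_y$ solves \eqref{4.0} with
\[
v^\ve_y(0,\cdot)\ge(\theta_0+\alpha)\chi_{(A+y)/\ve}\ge\theta_0\chi_{B_{r/\ve}((a_0+y)/\ve)}
\]
for every $y\in\bbR^d$. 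Once $\ve<r/R_0$, the latter dominates $\theta_0\chi_{B_{R_0}((a_0+y)/\ve)}$, so the comparison principle in H'(i), applied to the reference solution $u(\cdot,\cdot;(a_0+y)/\ve)$ from H'(ii), yields
\[
u^\ve(t,x+y;y)=v^\ve_y(t/\ve,(x+y)/\ve)\;\ge\; u\bigl(t/\ve,(x+y)/\ve;(a_0+y)/\ve\bigr).
\]

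Now apply H'(iii). Set $s:=t/\ve$ and $\eta:=(a_0+y)/\ve$. For $(t,x)\in N$ and $|y|\le\lambda$,
\[
|\eta|\le\frac{|a_0|+\lambda}{\ve}\le\La s,\qquad\La:=\frac{2(|a_0|+\lambda)}{t_0},
\]
and $(x+y)/\ve-\eta=(x-a_0)/\ve\in(1-\delta)(t/\ve)\mathcal S=(1-\delta)s\mathcal S$ by the choice of $N$. Since $s\to\infty$ uniformly as $\ve\to 0$, \eqref{4.14} gives
\[
\lim_{\ve\to 0}\;\inf_{|y|\le\lambda}\;\inf_{(t,x)\in N}\; u\bigl(t/\ve,(x+y)/\ve;(a_0+y)/\ve\bigr)=1,
\]
which combined with the comparison above yields $\lim_{\ve\to 0}\inf_{|y|\le\lambda}u^\ve(t,x+y;y)=1$ uniformly on $N$. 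Any compact $K\subseteq\Theta^{A,\mathcal S}$ is covered by finitely many such neighborhoods, giving the desired locally uniform convergence.

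The main subtlety is Step~1: the simultaneous use of openness of $A$ (to find $B_r(a_0)\subseteq A$ accommodating a ball of radius $R_0$ after scaling) and openness of $\mathcal S$ (to gain the factor $(1-\delta)$ needed to apply \eqref{4.14}). Once both are secured, matching the parameter $\La$ in H'(iii) against $|a_0|+\lambda$ and $t_0$ is straightforward, and the rest is a direct comparison-and-scaling argument.
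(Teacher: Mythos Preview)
Your proof is correct and takes essentially the same approach as the paper: exploit openness of $A$ to fit a ball of radius $R_0$ inside the rescaled initial set, exploit openness of $\mathcal S$ to obtain the $(1-\delta)$ slack needed for \eqref{4.14}, and compare with the reference solutions from H'(ii). The only cosmetic difference is that you localize at a single $(t_0,x_0)$ with one center $a_0\in A$ and then cover the compact set by finitely many such neighborhoods, whereas the paper treats the whole compact $K$ in one step with uniform parameters $\Lambda',\delta$ and a short initial waiting time $T_0$; your pointwise version is arguably slightly cleaner since it avoids that waiting time.
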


\noindent {\it Remarks.}
1.  The proof shows that the convergence is in fact uniform on $[t_0,\infty)\times Q$ for any $t_0>0$ and compact $Q\subseteq A+t_0\mathcal{S}$.
\smallskip

2.  If $u^\ve(0,\cdot;y)\ge \chi_{A+y}$, then the proof can easily be adapted to the case when $\{0\}\times A$ is added to $\Theta^{A,\mathcal{S}}$ (and $t_0\ge 0$ in Remark 1 also works).  This also applies to Theorems \ref{T.4.1}(i), \ref{T.4.4}(ii), and \ref{T.4.3} below.

\begin{proof}
Let $K\subseteq \Theta^{A,\mathcal{S}}$ be compact.  Since $\Theta^{A,\mathcal{S}}$ is open, there are $\Lambda',\de>0$ such that 
\[
K \subseteq\{(t,x)\in(0,\infty)\times\bbR^d \,|\, x\in (A\cap B_{\Lambda' t_K}) +(1-2\de)t \mathcal{S} \},
\]
where $t_K=\min_{(t,x)\in K} t>0$.
From Hypothesis H'(ii) and the comparison principle, 
applied to $u_\ve(t,x;y):=u^\ve(\ve t, \ve x;y)$ (which solves \eqref{4.0}), we know that there is ($\ve$-independent) $T_0>0$ such that for all small enough $\ve>0$  we have
\[
 \inf_{y\in\bbR^d} u_\ve(T_0,\cdot+\ve^{-1}y;y) \ge \theta_0 \chi_{\ve^{-1}A+B_{R_0}}.
\]
This, \eqref{4.14} with $\La:=\La'+\lambda$, and the comparison principle show that 
for each $\eta>0$ there is $\tau_\eta$ such that
\[
 \inf_{|y|\le \lambda} u_\ve(T_0+\ve^{-1}t,\cdot+\ve^{-1}y;y) \ge (1-\eta)\chi_{(\ve^{-1}A\cap B_{\Lambda'\ve^{-1}t}) +(1-\de)\ve^{-1}t \mathcal{S}}
\]
whenever $\ve^{-1}t\ge\tau_\eta$.  But this means that
\[
 \inf_{|y|\le \lambda} u_\ve(\ve^{-1}t,\cdot+\ve^{-1}y;y) \ge (1-\eta)\chi_{\ve^{-1}[(A\cap B_{\Lambda' t_K})+(1-2\de)t\mathcal{S}]}
\]
whenever $t\ge t_K$ and $\ve>0$ is small enough.  Since   $\eta>0$ was arbitrary, the result follows.
\end{proof}

Next we show how Wulff shapes with tangent hyperplanes (and thus with normal vectors) give rise to front speeds.  

\begin{thm}  \label{T.4.1}
Assume Hypothesis H',
 let $e\in \mathbb{S}^{d-1}$ and  $c^*(e)$ be from \eqref{eq:fsdef},
 and for $\ve>0$ and $y\in \bbR^d$ let  $u^\ve(\cdot,\cdot;y)$ solve \eqref{4.0e}.  

(i) If  $\alpha,\lambda>0$ and 
\[
u^\ve(0,\cdot;y)\ge (\theta_0+\alpha)\chi_{\{x\cdot e< y\cdot e\}}
\]
for all $\ve>0$ and $y\in \bbR^d$, then 
\[
\lim_{\ve\to 0}  \inf_{|y|\le \lambda} u^\ve(t,x+y;y)= 1
\]
locally uniformly on $\{(t,x)\in (0,\infty)\times\RR^d \,|\,  x\cdot e< c^*(e)t\}$.

(ii) If $e$ is a unit outer normal of  $\mathcal{S}$, $\alpha,\lambda>0$, and
\[
u^\ve(0,\cdot;y)\le (1-\alpha)\chi_{\{x\cdot e\le y\cdot e\}}
\]
for all $\ve>0$ and $y\in \bbR$, then 
\[
\lim_{\ve\to 0}  \sup_{|y|\le \lambda} u^\ve(t,x+y;y)= 0
\]
locally uniformly on $\{(t,x)\in (0,\infty)\times\RR^d \,|\,  x\cdot e> c^*(e)t\}$.  
 In particular, this and (i) imply that $c^*(e)$ is a strong front speed  in  direction $e$ for \eqref{4.0}
  in the sense of Remark 3 after Hypothesis H'.
\end{thm}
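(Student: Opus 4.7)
The key tool is \eqref{eq:fsdef}, rewritten as $c^*(e) = \sup_{z \in \mathcal S} z \cdot e$. Fix a compact $K \subseteq \{(t,x) : x \cdot e < c^*(e) t\}$ and $\lambda > 0$. For each $(t,x) \in K$, openness of $\mathcal S$ together with \eqref{eq:fsdef} furnishes (uniformly on $K$) some $\delta > 0$ and a vector $\zeta \in t(1-\delta)\mathcal S$ with $\zeta \cdot e > x \cdot e$; setting $\xi := x - \zeta$, we have $\xi \cdot e < 0$ and $x - \xi \in t(1-\delta)\mathcal S$. Switch to the unscaled solutions $u_\ve(s,z;y) := u^\ve(\ve s, \ve z; y)$, whose initial data dominates $(\theta_0 + \alpha)\chi_{\{z \cdot e < \ve^{-1} y \cdot e\}}$. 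By H'(ii) combined with Lemma~\ref{L.3.11}, after a bounded time $T_0 = T_0(\alpha)$ we obtain $u_\ve(T_0, \cdot; y) \ge \theta_0 \chi_{B_{R_0}(\xi_0)}$ with $\xi_0 := \ve^{-1}(\xi + y)$, since $\ve^{-1}\xi \cdot e \to -\infty$ places the ball deep inside the active half-space. Applying \eqref{4.14} uniformly over $|\xi_0| \le \Lambda(\ve^{-1}t - T_0)$ for a sufficiently large $\Lambda = \Lambda(K,\lambda)$ yields $u_\ve(\ve^{-1}t, z; y) \ge 1 - \eta$ throughout $\xi_0 + (\ve^{-1}t - T_0)(1 - \delta/2)\mathcal S$, a set that contains $\ve^{-1}(x+y)$ for all small $\ve$. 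Reverting to the $\ve$-scaled variables yields (i).

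\textbf{Plan for (ii).} The outer-normal hypothesis on $e$ forces $\mathcal S \subseteq \{z \cdot e \le c^*(e)\}$, since $c^*(e) = \sup_{z \in \mathcal S} z \cdot e$ equals $y^* \cdot e$ for the corresponding boundary point $y^* \in \partial \mathcal S$. Fix compact $K \subseteq \{(t,x) : x \cdot e > c^*(e) t\}$ and pick $\delta > 0$ uniformly on $K$ with $x \cdot e > (1+3\delta) c^*(e) t$, together with $r \in (c^*(e) t(1+2\delta), c' t)$ for $c'$ from Lemma~\ref{L.4.2} (using $c^*(e) \le c'$). With $z^* := \ve^{-1}(x + y)$ and $s := \ve^{-1} t$, the translated form of Lemma~\ref{L.4.2} applied at $z^*$ with radius $\ve^{-1} r$ compares $u_\ve(s, z^*; y)$ to the solution $u'$ of \eqref{4.0} from the truncated data $u'_0 := (1-\alpha) \chi_{A}$, where $A := \{z \cdot e \le \ve^{-1} y \cdot e\} \cap B_{\ve^{-1}r}(z^*)$; the approximation error is $\le C e^{-\ve^{-1} m'(r - c't)} \to 0$. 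To bound $u'(s, z^*)$, cover $A$ by balls $B_{R_0}(z_j)$ with $z_j \cdot e \le \ve^{-1} y \cdot e$. By H'(ii), there is $T_1 = T_1(\alpha)$ such that the solution of \eqref{4.0} from $\theta_0 \chi_{B_{R_0}(z_j)}$ exceeds $1-\alpha \ge u'_0|_{B_{R_0}(z_j)}$ on $B_{R_0}(z_j)$ at time $T_1$; running these auxiliary solutions forward and building an envelope produces a majorant for $u'$. Applying \eqref{4.15} uniformly in the translates $|z_j| \le \Lambda' s$ (as provided by Proposition~\ref{P.3.3}) shows each contribution to $u'(s, z^*)$ is $\le \eta$ unless $z^* - z_j \in (s + T_1)(1+\delta)\mathcal S$; but this, combined with $z_j \cdot e \le \ve^{-1} y \cdot e$ and $\mathcal S \subseteq \{z \cdot e \le c^*(e)\}$, would force $\ve^{-1} x \cdot e \le (s + T_1)(1+\delta) c^*(e)$, contradicting $x \cdot e > (1+3\delta) c^*(e) t$ for $\ve$ small. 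Thus $u'(s, z^*) \to 0$, completing (ii). The final ``strong front speed'' claim follows by applying (i) with initial data $\ge (\theta_0 + \alpha)\chi_{\{x \cdot e < y \cdot e - M\}}$ (which H'(ii) produces after bounded time from the front-like data $\theta_0 \chi_{\{x \cdot e < y \cdot e\}}$) and (ii) with the trivial bound $\theta_0 \chi \le (1-\alpha)\chi$ for $\alpha \le 1 - \theta_0$.

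\textbf{Main obstacle.} The principal technical difficulty lies in the covering step of (ii): the Wulff-shape upper bound \eqref{4.15} in H'(iii) is formulated only for the specific initial data $\theta_0 \chi_{B_{R_0}(y)}$, whereas we need the analogous bound for the truncated solution $u'$ starting from $(1-\alpha)\chi_A$ on a compact set $A$ whose diameter grows like $\ve^{-1}$. The envelope construction above requires dominating $(1-\alpha)$-data on each $B_{R_0}(z_j)$ by a $\theta_0\chi_{B_{R_0}(z_j)}$-solution evolved for time $T_1$, and then combining these dominations into a genuine supersolution rather than a sum of solutions (which would fail to be a supersolution for nonlinear $F$). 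Making this work hinges crucially on the uniformity of \eqref{4.15} over the roughly $\ve^{-d}$ balls in the covering, which is precisely the strong form of the Wulff-shape limit supplied by Proposition~\ref{P.3.3} and built into Hypothesis H'(iii); without this uniform version the argument breaks down.
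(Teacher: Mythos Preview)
Your argument for (i) is correct and essentially matches the paper's: it invokes Theorem~\ref{T.3.6} with $A=\{x\cdot e<0\}$, which is precisely the computation you spell out.

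Your argument for (ii) has a genuine gap at the ``envelope'' step, and the gap is not the one you flag. Knowing that each auxiliary solution $v_j(\cdot+T_1,\cdot)$ from $\theta_0\chi_{B_{R_0}(z_j)}$ exceeds $(1-\alpha)$ on $B_{R_0}(z_j)$ at time $T_1$ gives you $v_j(T_1,\cdot)\ge u'_0$ only on $B_{R_0}(z_j)$, not globally, so the comparison principle does not yield $u'\le v_j(\cdot+T_1,\cdot)$. Taking a maximum of the $v_j$'s does not help either: for equations of this type the maximum of solutions is a subsolution, not a supersolution, so it cannot serve as an upper barrier for $u'$. There is no legitimate way to patch the $v_j$'s into a single supersolution dominating the truncated data. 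A telling symptom is that your argument never uses the outer-normal hypothesis beyond the inclusion $\mathcal S\subseteq\{z\cdot e\le c^*(e)\}$, which holds for \emph{every} $e$ by convexity; if your scheme worked it would prove (ii) for all directions, contradicting the Caffarelli--Lee--Mellet examples discussed in the introduction.

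The paper's approach is quite different: it uses a \emph{single} compactly-supported Wulff-shape solution started from a point $-CTw(e')e'$ far in the direction $-e'$, where $e'\in\mathbb S^{d-1}$ is the point at which $e$ is the outer normal to $\mathcal S$. After waiting a long time $t_{C,b,T}\approx CT$, this solution exceeds $(1-\alpha)$ on the entire set $\{x\cdot e<0\}\cap B_T$ (equation~\eqref{4.2}), hence dominates the truncated half-space data globally. The crucial estimate is $k_C=1+o(C^{-1})$ as $C\to\infty$, which says the boundary of the scaled Wulff shape hugs the hyperplane $\{x\cdot e=c^*(e)t\}$ to sublinear order near $w(e')e'$; this is exactly the content of the tangent-hyperplane (outer-normal) hypothesis and is what prevents the single Wulff shape from either failing to cover $\{x\cdot e<0\}\cap B_T$ or overshooting past $z^*$. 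Careful calibration of the parameters $C$, $b$, $T$ (choose small $b$, then large $C$ so that $c^*(e)(k_C-1)C\le\delta'b$, then $T\to\infty$) together with Lemma~\ref{L.4.2} finishes the argument.
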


\noindent {\it Remark.}  (ii) and Lemma \ref{L.4.2} show that (ii) in fact holds locally uniformly on $\{(t,x)\in [0,\infty)\times\RR^d \,|\,  x\cdot e> c^*(e)t\}$ (the proof also shows this).  This also applies to Theorems  \ref{T.4.4}(iii,iv), and \ref{T.4.3} below.
\smallskip

This immediately yields Theorem \ref{T.1.4}(ii) as well as the corresponding part of Theorem \ref{T.1.8}.

\begin{proof}[Proof of Theorem \ref{T.1.4}(ii) and of the corresponding part of Theorem \ref{T.1.8}] 
Convexity of $\calS$ is established by Lemma \ref{L.4.0}. The second claim follows from the last claim in Theorem \ref{T.4.1}(ii), applied to \eqref{1.10} (or specifically \eqref{genhomeq}) with any fixed $\om\in\Om_0$, where $\Om_0$ is the full-measure  set from
Proposition \ref{P.3.3}.
\end{proof}

\begin{proof}[Proof of Theorem \ref{T.4.1}]
(i) This immediately follows from \eqref{eq:fsdef}
and Theorem \ref{T.3.6} with $A=\{x\cdot e< 0\}$,
because  then $A+t\mathcal S = \{x\cdot e< c^*(e)t\}$ for all $t>0$.  

(ii)
The second claim is immediate from the first and (i).  Indeed, if we take $u^\ve(0,\cdot;y)= \frac{1+\theta_0}2\chi_{\{x\cdot e< y\cdot e\}}$ and $(\alpha,\lambda):=(\frac{1-\theta_0}2,\La)$, then the functions $u^1(\cdot,\cdot;y)$, which solve \eqref{4.0}, satisfy $u^1(t,x;y)=u^{1/t}(1,\frac xt; \frac yt)$ and hence 
\begin{align*}
\lim_{t\to \infty}  \inf_{|y|\le \La t} \inf_{x\in (c^*(e)e-K)t} u^1(t,x+y;y) &= 1,  
\\ \lim_{t\to \infty}  \sup_{|y|\le \La t} \sup_{x\in (c^*(e)e+K)t} u^1(t,x+y;y) &= 0
\end{align*}
for any compact $K\subseteq  \{x\cdot e>0\}$.  In fact, running this argument in the opposite direction (and using $u^{\ve}(t,x; y)=u^1(\frac t\ve,\frac t\ve \frac xt; \frac t\ve \frac yt)$), together with an argument as in the proof of Theorem \ref{T.1.4}(i), also show that the claim in (i) and the first claim in (ii) (even without requiring $e$ to be a unit normal of $\mathcal S$) follow from the second claim in (ii).

Let us now prove the first claim. It suffices to consider $u^\ve(0,\cdot;y)= (1-\alpha)\chi_{\{x\cdot e\le y\cdot e\}}$, in which case the function $u_\ve(t,x;y):=u^\ve(\ve t, \ve x;y)$ is the solution of \eqref{4.0} satisfying $u_\ve(0,\cdot+\ve^{-1}y;y)= (1-\alpha)\chi_{\{x\cdot e\le 0\}}$.

Let $\Lambda>0$ be such that $\mathcal S\subseteq B_{\Lambda/4}$ and let $K\subseteq \{t\ge 0\,\,\&\,\,  x\cdot e> c^*(e)t\}$ be a compact set.  Then 
\[
K\subseteq \{t\ge 0\,\,\&\,\,  x\cdot e\ge  2\delta'+(c^*(e)+\delta')t\}\cap B_{1/\delta'}(0,0) 
\]
 for some $\delta'>0$ (with $B_r(t,x)\subseteq\bbR^{d+1}$).  Finally, let $e'\in \mathbb{S}^{d-1}$ be such that  $\mathcal S$ has unit outer normal $e$ at the point $w(e')e'\in\partial \mathcal S$ (then $w(e')e'\cdot e=c^*(e)$ by convexity of $\mathcal S$).

The proof is based on \eqref{4.14} and  \eqref{4.15} and  the observation that for all large $t_0$, the boundary $\partial ((t+t_0)\mathcal S-w(e')e't_0)$ is very close to the hyperplane $\{x\cdot e =  c^*(e)t\}$ at the point $w(e')e't$.  Hence the solution to \eqref{4.0} that starts from initial data $\theta_0 \chi_{B_{R_0}(-w(e')e't_0)}$ at time $t=-t_0$ will be close to the solution with initial data $\chi_{\{x\cdot e<0\}}$ at time $t=0$ on a space-time ball centered at $(0,0)$ and with radius $\ll t_0$.  By \eqref{4.14} and  \eqref{4.15}, on this ball the former solution (with compactly-supported initial data) looks like a front moving with speed $c^*(e)$ in direction $e$, so we will be able to conclude the same about the latter solution (with front-like initial data), which is essentially just a rescaling of $u^\ve$ (with $t_0\sim\ve^{-1}$).  To make this argument rigorous, we will need to choose the parameters involved very carefully, and we will also need to contend with the shifts $\ve^{-1}y$ at the same time.  (The reader may want to first consider the notationally simpler case $y=0$, when $u_\ve$ is also independent of $\ve$ and the function $u^{C,b,T}$ and number $t_{C,b,T}$ below do not depend on $b$.)

For any  $C,b,T>0$  with $C>b\lambda w(e')^{-1}$ let 
\[
u^{C,b,T}(t-t_{C,b,T}(y),x;y):=u(t,x;bT y-CTw(e')e'),
\]
 with $u(\cdot,\cdot;\cdot)$ from Hypothesis H'(ii)
 and $t_{C,b,T}(y)>0$ the smallest number such that
\begin{equation} \label{4.2}
u^{C,b,T}(0,\cdot+bTy;y)\ge (1-\alpha)\chi_{\{x\cdot e<0\}\cap B_T}.
\end{equation}
(We will eventually choose a small $b$ and a large $C$, and then take $T\to\infty$.)
It follows from \eqref{4.14} and  \eqref{4.15} (with the above $\Lambda$, and $\delta\to 0$; note that $bT\lambda+CTw(e')\le \frac {\Lambda} 2 CT$)  that
\[
k_C:= \lim_{T\to\infty} \frac{t_{C,b,T}(y)}{CT} 
\]
exists and (for each $y$) it is the smallest number such that 
\[
\{x\cdot e<c^*(e)\}\cap B_{1/C}(w(e')e')\subseteq k_C\mathcal S,
\] 
as well as
\[
\lim_{T\to\infty} \sup_{|y|\le \lambda} \left| \frac{t_{C,b,T}(y)}{CT}-k_C \right|=0.
\]
So $k_C\ge 1$ and we also have $k_C=1+o(C^{-1})$ (as $C\to\infty$) because $\mathcal S$ has a tangent hyperplane at $w(e')e'$ with outer normal $e$.  Moreover, 
\eqref{4.15} and 
$c^*(e)=w(e')e'\cdot e$
also show that for any fixed $(C,b)$ we have 
\[
\lim_{T\to\infty} \sup_{|y|\le \lambda} \sup_{s\ge 0} \sup_{x\cdot e\ge c^*(e)(k_C-1)CT +\delta'bT+(c^*(e)+\delta')bTs} u^{C,b,T}(bTs,x+bTy;y)=0
\]
because $\mathcal S \cap \{x\cdot e\ge c^*(e)\}=\emptyset$, $\delta'>0$, and (with $o(1)=o(T^0)$ uniform in $|y|\le \la$)
\[
c^*(e)(k_C-1)CT +\delta'bT= [c^*(e) +\delta'b(k_CC)^{-1}+o(1)]t_{C,b,T}(y) -CTw(e')e'\cdot e.
\]

If we choose $C$ large enough so that $c^*(e)(k_C-1)C\le \delta' b$ (which is possible for any $b>0$ because $k_C=1+o(C^{-1})$), we obtain
\[
\lim_{T\to\infty} \sup_{|y|\le \lambda} \sup_{s\ge 0} \sup_{x\cdot e\ge [2\delta'+(c^*(e)+\delta')s]bT} u^{C,b,T}(bTs,x+bTy;y)=0,
\]
and therefore 
\begin{equation} \label{4.3}
\lim_{T\to\infty} \sup_{|y|\le \lambda} \sup_{(t,x)\in bTK} u^{C,b,T}(t,x+bTy;y)=0.
\end{equation}
Lemma \ref{L.4.2} applied to $u_{(bT)^{-1}}$ and $u^{C,b,T}$ (with the couple $(y,r)$ being $(x+bTy,T-|x|)$ when $|x|< T$), together with \eqref{4.2}, now yields
\[
u_{(bT)^{-1}}(t,x+bTy;y)\le u^{C,b,T}(t,x+bTy;y) + c'e^{-m'(T-|x|-c't)}
\]
for each $(t,x)\in[0,\infty)\times\RR^d$.  This, \eqref{4.3}, and $K\subseteq B_{1/\delta'}(0,0)$ then yield
\[
0\le \lim_{T\to\infty} \sup_{|y|\le \lambda} \sup_{(t,x)\in bTK} u_{(bT)^{-1}}(t,x+bTy;y)\le \lim_{T\to\infty}  c'e^{-m'(T-(1+c')bT/\delta')}=0,
\]
so long as we choose any $b<\delta'(1+c')^{-1}$ (and then $C$ accordingly).  But then
\[
\lim_{\ve\to 0} \sup_{|y|\le \lambda} \sup_{(t,x)\in K} u^\ve(t,x+y;y) = \lim_{\ve\to 0}  \sup_{|y|\le \lambda} \sup_{(t,x)\in \ve^{-1} K} u_\ve( t,x+\ve^{-1}y;y)=0.
\]
\end{proof}

In fact, the above convergences are uniform in all directions $e$ for which \eqref{4.0} has a front speed. This is the content of the following result, which can then be used to prove Theorem \ref{T.1.9} and Theorem \ref{T.1.4}(iv).

\begin{thm}  \label{T.4.4}
Assume Hypothesis H'(i,ii),
 let $\mathcal T$ be the set of all directions $e\in\mathbb S^{d-1}$ for which \eqref{4.0}  has a strong front speed $c^*(e)$ in the sense of Remark 3 after Hypothesis H', and let $\tilde{\mathcal T}\subseteq\mathcal T$ be the set of all  $e\in \mathbb{S}^{d-1}$ for which \eqref{4.0} has a strong exclusive front speed. 
For each $e\in\mathbb S^{d-1}$, let
$R_{e}$ be any rotation on $\mathbb R^d$ with $R_{e} e_1=e$,   and for $\ve>0$ and $y\in \bbR^d$ let  $u^\ve(\cdot,\cdot;y,e)$ solve \eqref{4.0e}.  

(i)  $\mathcal T$ and $\tilde{\mathcal T}$ are closed and $c^*|_{\mathcal T}$ is positive, bounded by $c'$ from Lemma~\ref{L.4.2}, and Lipschitz continuous with Lipschitz constant only depending on $c'$.

(ii) If  $\alpha,\lambda>0$ and
\[
u^\ve(0,\cdot;y,e)\ge (\theta_0+\alpha)\chi_{\{x\cdot e< y\cdot e\}}
\]
for all $\ve>0$ and $(y,e)\in \bbR^d\times\mathcal T$, then 
\[
\lim_{\ve\to 0}  \inf_{|y|\le \lambda\,\&\, e\in\mathcal T} u^\ve(t, R_{e} x+c^*(e)t e +y;y,e)= 1
\]
locally uniformly on $\{(t,x)\in (0,\infty)\times\RR^d \,|\,  x_1<0\}$.

(iii) If  $\alpha,\lambda>0$ and
\[
u^\ve(0,\cdot;y,e)\le (1-\alpha)\chi_{\{x\cdot e\le y\cdot e\}}
\]
for all $\ve>0$ and $(y,e)\in \bbR^d\times\mathcal T$, then
\[
\lim_{\ve\to 0}  \sup_{|y|\le \lambda \,\&\, e\in\mathcal T} u^\ve(t, R_{e} x+c^*(e)t e +y;y,e)= 0
\]
locally uniformly on $\{(t,x)\in (0,\infty)\times\RR^d \,|\,  x_1>0\}$.

(iv) 
For each compact set $K\subseteq  \{(t,x)\in (0,\infty)\times\RR^d \,|\,  x_1>0\}$ there is $\beta_{K}:(0,1]\to(0,1]$ with $\lim_{\alpha\to 0}\beta_{K}(\alpha)=0$ such that if   $\alpha,\la>0$ and
 \begin{align*} 
u^{\ve}(0,\cdot;y,e,\alpha) & \le  \chi_{\{x\cdot e\le y\cdot e\}} + \alpha \chi_{\{x\cdot e> y\cdot e\}}
\end{align*}
for all $\ve>0$ and $(y,e)\in \bbR^d\times \tilde{\mathcal T}$, then 
\begin{align*} 
\limsup_{\ve\to 0} \sup_{|y|\le \lambda \,\&\, e\in\tilde{\mathcal T}} \sup_{(t,x)\in K} u^\ve(t, R_{e} x+c^*(e)t e +y;y,e,\alpha) \le \beta_{K}(\alpha).
\end{align*}
\end{thm}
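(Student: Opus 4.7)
I plan to prove the parts in order, bootstrapping the uniformities in (ii)--(iv) from the regularity and closedness established in (i).

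\emph{Part (i).} Positivity of $c^{*}$ on $\mathcal T$ follows from the quantitative spreading in Hypothesis H'(ii) via an analog of Lemma \ref{L.3.11}, while the upper bound $c^{*}(e) \le c'$ follows from Lemma \ref{L.4.2}: comparing $u(\cdot,\cdot;0,e)$ with the identically zero solution on balls disjoint from $\{x\cdot e < 0\}$ gives $u(t,y;0,e) \le c' e^{-m'(y\cdot e - c't)}$. For Lipschitz continuity, given $e, e' \in \mathcal T$ with $|e-e'|=\delta$ small, I would compare $u(\cdot,\cdot;y_0,e)$ with $u(\cdot,\cdot;0,e')$ via Lemma \ref{L.4.2} on the ball $B_R(z)$ with $R = Ct$ for a fixed $C > c'$: choosing $y_0 = -(|z|+R)\delta\, e$ (so the half-space inclusion $\{x\cdot e < y_0\cdot e\} \cap B_R(z) \subseteq \{x\cdot e' < 0\}$ holds and $|y_0| = O(t\delta)$) makes the error term decay, and evaluating at $z = se$ with the strong front speed property in both directions (which tolerates shifts $|y_0| \le \Lambda t$ for any $\Lambda$) yields $|c^{*}(e)-c^{*}(e')| \le L(c')\delta$. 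Closedness of $\mathcal T$ is then immediate (the Cauchy sequence $c^{*}(e_n)$ has a limit $c$, and the same comparison transfers the strong front speed property to the limit direction), and an analogous argument for $\tilde{\mathcal T}$, tracking $\beta_{K,e}$ through slight enlargements of $K$ and a diagonal choice at any limit point, gives its closedness.

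\emph{Parts (ii)--(iv).} By (i), both $\mathcal T$ and $\tilde{\mathcal T}$ are compact. After rescaling to $T := t/\ve$, each target claim becomes a uniform limit over $|y'| \le \Lambda T$ (for a fixed constant $\Lambda$) and over $e$ in the relevant set, evaluated at points $T\tilde x + y'$ with $\tilde x \in c^{*}(e) e + R_e K_0$ for $K_0$ compact in $\{x_1 < 0\}$ (for (ii)) or $\{x_1 > 0\}$ (for (iii) and (iv)). I would fix small $\delta > 0$ and cover $\mathcal T$ (resp.\ $\tilde{\mathcal T}$) by finitely many $\delta$-neighborhoods of points $e_1, \ldots, e_N$. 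For each $e \in B_\delta(e_j)$ and admissible $y'$, set $y_j := y' + a_T e_j$ with $a_T = O(T\delta)$ chosen so that the initial datum of $u(\cdot,\cdot;y',e,\cdot)$ is dominated above or below (as appropriate) by that of $u(\cdot,\cdot;y_j,e_j,\cdot)$ on $B_R(T\tilde x + y')$ with $R = CT$. Lemma \ref{L.4.2} then reduces $u(T, T\tilde x + y'; y', e, \cdot)$ to $u(T, T\tilde x + y'; y_j, e_j, \cdot)$ up to an $o(1)$ error; since $|y_j| \le (\lambda + O(\delta))T$, the strong front speed (for (ii), (iii)) or strong exclusive front speed (for (iv)) in direction $e_j$ applies with a single compact target set $K_j \subseteq \{x \cdot e_j > 0\}$ chosen to contain $R_e K_0 - (c^{*}(e_j) e_j - c^{*}(e) e) - O(\delta) e_j$ for every $e \in B_\delta(e_j) \cap \mathcal T$. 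For (iv) one then takes $\beta_K(\alpha) := \max_j \beta_{K_j, e_j}(\alpha)$, which still vanishes as $\alpha \to 0$.

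\emph{Main obstacle.} The key technical step is the geometric verification that, for $\delta$ sufficiently small, the family $\{R_e K_0 - (c^{*}(e_j) e_j - c^{*}(e) e) - O(\delta) e_j \,:\, e \in B_\delta(e_j) \cap \mathcal T\}$ fits inside a single compact $K_j$ strictly separated from the hyperplane $\{x \cdot e_j = 0\}$. This combines Lipschitz continuity of $c^{*}$ from (i) (so $c^{*}(e_j) e_j - c^{*}(e) e = O(\delta)$), continuity of $R_e$ in $e$ (so $R_e K_0$ approximates $R_{e_j} K_0$ as a set), and the strict positivity (or negativity) of the first coordinate on $K_0$ inherited from the hypothesis on the target sets. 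With this in hand, the strong front speed uniformity over shifts $|y| \le \Lambda T$ from Remark 3 after Hypothesis H' absorbs the $O(T\delta)$ correction and yields the desired $e$-uniformity; the additional subtlety for (iv) is producing an $e$-independent $\beta_K$, which is handled by the finite maximum over the cover.
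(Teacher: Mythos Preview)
Your proposal is correct and follows essentially the same approach as the paper: both arguments establish positivity from H'(ii), the upper bound from Lemma~\ref{L.4.2}, and then reduce the $e$-uniformity in (ii)--(iv) to finitely many fixed directions via a finite $\delta$-cover of $\mathcal T$ (resp.\ $\tilde{\mathcal T}$), using Lemma~\ref{L.4.2} to compare solutions for nearby directions after a shift of the initial half-space by $O(\delta)$ in the normal direction; the Lipschitz estimate on $c^*$ and closedness of $\mathcal T,\tilde{\mathcal T}$ are extracted from the same comparison, and $\beta_K$ in (iv) is taken as the maximum of the $\beta_{K_j,e_j}$ over the cover. The paper differs only in organization (it proves (i)--(iii) simultaneously on the explicit test sets $K_M^\pm$ with the quantitative separation $|e-e'|\le (5(3c'+1)M^2)^{-1}$, deriving Lipschitz and closedness along the way rather than first), and in making the ``main obstacle'' you flag fully explicit via the containment $R(\overline{B_M}\cap\{x_1\ge \tfrac1M\}) + c^*(e')te \subseteq R'(\overline{B_{M+1}}\cap\{x_1\ge \tfrac1{5M}\}) + (5M)^{-1}e' + c^*(e')te'$.
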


\begin{proof}[Proof of Theorem \ref{T.1.9}]
(i) Let $\mathcal{T}'$ be a dense countable subset of $\mathcal T$.  There is obviously $\Om_0\subseteq \Om$ with $\mathbb P[\Om_0]=1$ such that  \eqref{1.10} with any fixed $\om\in\Om_0$ has the same front speed $c^*(e)$ in direction $e$ for each $e\in\mathcal{T}'$.  Proposition \ref{P.3.5} shows that these are strong front speeds, and applying Theorem \ref{T.4.4}(i) now yields that \eqref{1.10} with any fixed $\om\in\Om_0$ has  the same (strong) front speed $c^*(e)$ in direction $e$ for each $e\in\mathcal T$, as well as the claimed bounds on $c^*$.  Finally, Theorem \ref{T.4.4}(ii,iii) yield \eqref{1.97} and \eqref{1.98} because $u_{\mscr{T}_y\om,e}(t,x) = u(t,x+y,\om;y,e)$.

The same argument applies to $\tilde{\mathcal T}$.  Moreover, the proof of Theorem~\ref{T.4.4}(iv) below shows that the functions $\beta_K$ there for compacts $K\subseteq \{(t,x)\in (0,\infty)\times\RR^d \,|\,  x_1>0\}$  are determined from the functions $\beta_{R_eK',e}$ from Proposition \ref{P.3.5}(ii) (i.e., the same ones as in Definition \ref{D.1.5}), where $e$ are all vectors from any dense countable subset $\mathcal{\tilde T}'$ of $\mathcal {\tilde T}$, and $K'$ are all compacts contained in $\{x_1>0\}\subseteq\RR^d$.  Since we can take the same $\mathcal{\tilde T}'$ for all the $\om\in\Om_0$ (for the here-relevant full-measure set $\Om_0$), it follows that the same $\beta_K$ is shared by all $\om\in\Om_0$.  If we now take $K:=\{1\}\times K'$ for any given compact $K' \subseteq \{x_1>0\}$, then Theorem~\ref{T.4.4}(iv) and $\frac 1\ve$-scaling in $(t,x)$ show that for each $\om\in\Om_0$ and $e\in\mathcal{\tilde T'}$, we can take $\beta_{R_eK',e}$ in the definition of strong exclusive front speeds for \eqref{1.10} with this fixed $\om$ to be precisely this $\beta_K$.  Hence we get the same $\beta_{R_eK',e}$ for all $\om\in\Om_0$ and $e\in \mathcal{\tilde T'}$ (as the definition of deterministic exclusive front speeds requires), and it also equals $\beta_{K',e_1}$.  Finally, Theorem \ref{T.4.4}(iv) also yields \eqref{1.99}.

(ii)  Since $\mathcal S$ is convex due to Lemma \ref{L.4.0}, for each $e\in\mathbb S^{d-1}$ there is a sequence $e_n\in\mathbb S^{d-1}$ converging to $e$ such that $\mathcal S$ has some unit outer normal $e_n'$ at $w(e_n)e_n$. 
Then Theorem \ref{T.4.1}(ii) shows that $e_n'\in\mathcal T$ and $c^*(e_n')=\max_{e'} w(e')e'\cdot e_n'=w(e_n)e_n\cdot e_n'$, the latter because $\mathcal S$ is convex. But then
\[
w(e) = \lim_{n\to\infty} w(e_n) =  \lim_{n\to\infty} \frac {c^*(e_n')}{e_n\cdot e_n'},
\]  
which yields one inequality in \eqref{1.40}.  The other is immediate from the comparison principle and the definitions of the (deterministic) Wulff shape and front speeds.
\end{proof}

%

\begin{proof}[Proof of Theorem \ref{T.4.4}]
(i--iii) First note that considering the solution from Definition \ref{D.1.3} and $u'\equiv 0$ in Lemma \ref{L.4.2} immediately yields $c^*(e)\le c'$.  We also have $c^*(e)> 0$ by iterating the second assumption in Hypothesis H'(ii) (and using the comparison principle).

Next, 
in (ii) and (iii) we only need to consider the convergence on sets $K_M^-:=[\frac 1M, M]\times(\overline {B_M}\cap\{ x_1\le - \frac 1M\})$ and $K_M^+:=[\frac 1M, M]\times( \overline {B_M}\cap\{ x_1\ge \frac 1M\})$  for all large $M$.
The definition of strong front speeds and $\ve$-scaling show that (ii,iii) hold for each fixed $e\in\mathcal T$ (see the first paragraph  of the proof of Theorem~\ref{T.4.1}(ii)), and hence also when the inf and sup are over all $e\in\mathcal T'$, with $\mathcal T'$ any finite subset of $\mathcal T$.  The full result (including the rest of (i)) will now be obtained once we prove an appropriate bound on the difference of solutions $u^\ve$ above with $e$ and $e'$ such that $|e-e'|$ is small.

This will be achieved using Lemma \ref{L.4.2} and the comparison principle.  Fix $\alpha,\lambda$, let $M\ge 1$ be arbitrary and let us consider $K_M^+$.  Let $\mathcal T'\subseteq \mathcal T$ be any finite set such that $\overline{\mathcal T}\subseteq B_{(5(3c'+1)M^2)^{-1}}(\mathcal T')$, and for any $e\in\overline{\mathcal T}$, let $e'\in \mathcal T'$ be arbitrary such that
\begin{equation} \label{4.45}
 |e-e'| \le (5(3c'+1)M^2)^{-1}.
\end{equation}

Let now $u,u'$ solve \eqref{4.0e} for some $\ve>0$, with initial data satisfying
\begin{align*}
u(0,\cdot) &\le (1-\alpha)\chi_{\{x\cdot e\le y\cdot e\}},
\\ u'(0,\cdot) &= (1-\alpha)\chi_{\{x\cdot e'\le y\cdot e'+(5M)^{-1}\}}.
\end{align*}
From \eqref{4.45} it follows that 
\[
\{x\cdot e\le y\cdot e \}\cap B_{(3c'+1)M}(y)\subseteq \{x\cdot e'\le y\cdot e'+(5M)^{-1}\}.
\]
Combining this with $c^*(e')\le c'$, we have that for any $t\in[0,M]$ and $z\in B_M(y+c^*(e')te)$, $u(0,\cdot)\le u'(0,\cdot)$ on $B_{2c'M}(z)\subseteq B_{(3c'+1)M}(y)$. 
Applying Lemma \ref{L.4.2} (after an $\ve$-scaling in space and time) yields
\[
u(t,z)\le u'(t,z) + c'e^{-m'\ve^{-1}(2c'M-c't)} \le  u'(t,z) + c'e^{-m'c'M\ve^{-1}}.
\]
So if $R,R'$ are rotations in $\bbR^d$ such that $Re_1=e$ and $R'e_1=e'$, then
\[
\sup_{(t,x)\in K_M^+} u(t,R x+y+ c^*(e')t e) \le  \sup_{(t,x)\in K_M^+} u'(t,R x+y+ c^*(e')t e) + c'e^{-m'c'M\ve^{-1}}.
\]
Now $t\le M$, \eqref{4.45}, and $c^*(e')\le c'$ yield 
\[
|c^*(e')te-c^*(e')te'| \le \frac 1{5M},
\]
and then $|Rx-R'x|\le \frac{1}{5(3c'+1)M}$ for any $x\in \overline{B_M}$ gives
\begin{multline*}
R \left( \overline{B_M}\cap \left\{ x_1\ge \frac 1M \right\} \right) +y+ c^*(e')t e \\
\subseteq R' \left( \overline{B_{M+1}}\cap \left\{ x_1\ge \frac 1{5M} \right\} \right) + y + (5M)^{-1}e'+c^*(e')te'.
\end{multline*}
This means that 
\begin{multline*}
\sup_{(t,x)\in K_M^+} u(t,R x+y+ c^*(e')t e) \\\le  \sup_{(t,x)\in K_{5M}^+} u'(t,R' x+y+ (5M)^{-1}e'+ c^*(e')t e') + c'e^{-m'c'M\ve^{-1}}.
\end{multline*}

Therefore 
\begin{align*}
 &  \sup_{|y|\le \lambda \,\&\, e\in\overline{\mathcal T} \,\&\, (t,x)\in K_M^+} \sup_{e'\in\mathcal T'\,\&\,  |e-e'| \le (5(3c'+1)M^2)^{-1} } u^\ve(t, R_{e} x+y+c^*(e')t e;y,e)
\\ \le &  \sup_{|y|\le \lambda \,\&\, e'\in\mathcal T' \,\&\, (t,x)\in K_{5M}^+} u^\ve(t,R_{e'} x+(y+ (5M)^{-1}e')+ c^*(e')t e';y+(5M)^{-1}e',e')\\
& + c'e^{-m'c'M\ve^{-1}}
\end{align*}
if we assume $u^\ve(0,\cdot;y,e)= (1-\alpha)\chi_{\{x\cdot e\le y\cdot e\}}$ (which suffices in (iii)).  Since the right-hand side converges to 0 as $\ve\to 0$ 
(see the start of this proof), so does the left-hand side.
If we now fix any $e,e'\in\mathcal T$ such that $M:=(5(3c'+1)|e-e'|)^{-1/2}\ge 1$ and pick $\mathcal T'$ as above containing $e'$, then this for $(t,x,y,\lambda)=(M,\frac 1Me_1,0,1)$ yields
\[
\lim_{\ve\to 0} u^\ve (M,(M^{-1} +c^*(e')M)e;0,e)=0.
\]
It then follows from the definition of front speed in direction $e$  that 
\[
c^*(e)\le c^*(e')+M^{-2} \le c^*(e')+ 5(3c'+1)|e-e'|
\]
 when $e,e'\in\mathcal T$ and $|e-e'| \le \frac 1{5(3c'+1)}$.

A similar argument, using
initial data satisfying
\begin{align*}
u(0,\cdot) &\ge (\theta_0+\alpha)\chi_{\{x\cdot e< y\cdot e\}},
\\ u'(0,\cdot) &= (\theta_0+\alpha)\chi_{\{x\cdot e'< y\cdot e'-(5M)^{-1}\}},
\end{align*}
eventually gives
\begin{align*}
 &  \inf_{|y|\le \lambda \,\&\, e\in\overline{\mathcal T} \,\&\, (t,x)\in K_M^-} \inf_{e'\in\mathcal T'\,\&\,  |e-e'| \le (5(3c'+1)M^2)^{-1}} u^\ve(t, R_{e} x+y+c^*(e') t e;l,e)
\\ \ge &  \inf_{|y|\le \lambda \,\&\, e'\in\mathcal T' \,\&\, (t,x)\in K_{5M}^-} u^\ve(t,R_{e'} x+(y- (5M)^{-1}e')+ c^*(e')t e';y-(5M)^{-1}e',e')\\
&- c'e^{-m'c'M\ve^{-1}}
\end{align*}
if we assume $u^\ve(0,\cdot;y,e)= (\theta_0+\alpha)\chi_{\{x\cdot e\le y\cdot e\}}$ (which suffices in (ii)).  Since the right-hand side  converges to 1 as $\ve\to 0$, so does the left-hand side, and from this we also get $c^*(e)\ge c^*(e')- 5(3c'+1)|e-e'|$ when $e,e'\in\mathcal T$ and $|e-e'| \le \frac 1{5(3c'+1)}$.

These last two paragraphs now show that 
$c^*|_{\mathcal T}$ is Lipschitz, with $|c^*(e)-c^*(e')|\le 5(3c'+1)|e-e'|$ when $e,e'\in\mathcal T$ and $|e-e'| \le \frac 1{5(3c'+1)}$.  We can therefore continuously extend $c^*$ to $\overline{\mathcal T}$.  Then for any $M'\ge 1$, the $\ve\to 0$ limits of the displayed inequalities in those two paragraphs with  $M:=2M'$
and  $\mathcal T'\subseteq\mathcal T$ such that $\overline{\mathcal T}\subseteq B_{(5(3c'+1)M^2)^{-1}}(\mathcal T')$ yield
\begin{align*}
 \lim_{\ve\to 0} \sup_{|y|\le \lambda \,\&\, e\in\overline{\mathcal T} \,\&\, (t,x)\in K_{M'}^+}  u^\ve(t, R_{e} x+y+c^*(e)t e;y,e) & = 0,
\\   \lim_{\ve\to 0} \inf_{|y|\le \lambda \,\&\, e\in\overline{\mathcal T} \,\&\, (t,x)\in K_{M'}^-}  u^\ve(t, R_{e} x+y+c^*(e)t e;y,e) & = 1
\end{align*}
because $|c^*(e)te-c^*(e')te|\le \frac 1M\le \frac 1{M'}-\frac 1M$ when $|e-e'|\le (5(3c'+1)M^2)^{-1}$ and $|t|\le M$.  Since $M'$ was arbitrary, it now follows after $\ve$-scaling that $c^*(e)$ is the front speed in direction $e$ for \eqref{4.0} whenever $e\in \overline{\mathcal T}$.  So
$\mathcal T$ is closed, and the last two limits also prove (ii,iii).

(iv)  This (including the proof that $\tilde{\mathcal T}$ is closed) is analogous to the above argument, 
but with 
 initial data satisfying
\begin{align*}
u(0,\cdot) &\le  \chi_{\{x\cdot e\le y\cdot e\}} + \alpha \chi_{\{x\cdot e> y\cdot e\}},
\\ u'(0,\cdot) &=  \chi_{\{x\cdot e'\le y\cdot e'+(5M)^{-1}\}} + \alpha \chi_{\{x\cdot e'> y\cdot e'+(5M)^{-1}\}},
\end{align*}
and also using that $c^*|_{\mathcal T}$ is continuous.  
(We note that the formula $u^{\ve}(t,x; y)=u^1(\frac t\ve,\frac t\ve \frac xt; \frac t\ve \frac yt)$ from the start of the proof of Theorem~\ref{T.4.1}(ii) shows that for any $M'\ge 1$ and $M:=2M'$ we can pick 
$\beta_{K_{M'}^+}$  to be the maximum of $\beta_{R_{e'}(\overline{B_{(5M)^2}}\cap\{ x_1\ge (5M)^{-2}\}),e'}$ 
from Remark 3 after Hypothesis H' (i.e., definition of strong exclusive front speeds) over all directions $e'\in\mathcal T'$, with any finite $\mathcal T'\subseteq\tilde{\mathcal T}$ such that  $\tilde{\mathcal T}\subseteq B_{(5(3c'+1)M^2)^{-1}}(\mathcal T')$.)
\end{proof}

\begin{proof}[Proof of Theorem \ref{T.1.4}(iv) and of the corresponding part of Theorem \ref{T.1.8}] 
Our proof of the first claim is similar to that in \cite{wein} for periodic ignition reactions (when pulsating fronts exist in all directions), although the uniformity with respect to certain translations in the remark after Theorem \ref{T.1.9} simplifies it  (uniformity with respect to directions is not necessary here).  Define $\mathcal S$ via \eqref{eq:fg} and \eqref{eq:wulffdef} and observe that then
\[
\mathcal S = \bigcap_{e\in\mathbb S^{d-1}} \{x\in\RR^d\,:\,x\cdot e < c^*(e)\}.
\]
This, Definition \ref{D.1.5}, and the comparison principle immediately yield \eqref{eq:ubsxi'} for almost all $\om\in\Om$ because $B_{2c'}(0)\cap ( \RR^d\setminus(1+\delta)\mathcal S)$ is contained in some finite union of some compacts $K_e\subseteq\{x\in\RR^d\,:\,x\cdot e > c^*(e)\}$ whenever $\delta>0$.  (Note that if we replace $(1+\delta)\mathcal S$ by $B_{2c'}(0)$ in \eqref{eq:ubsxi'}, then the convergence is obvious from Lemma \ref{L.4.2}.)

To obtain \eqref{eq:lbsxi'}, consider any strictly convex compact $W\subseteq\mathcal S$ with a smooth boundary.  If $W$ is such a set, then for each $e\in\mathbb S^{d-1}$ there is a unique point $x_e\in\partial W$ at which $\partial W$ has unit outer normal $e$ (and this map is a bijection).  Also, continuity and positivity of $c^*$ show that there is $\delta>0$ such that
\[
W \subseteq  \bigcap_{e\in\mathbb S^{d-1}} \{x\in\RR^d\,:\,x\cdot e < (1-\delta)c^*(e) \},
\]
which in particular means $x_e\cdot e\le (1-\delta)c^*(e)$ for all $e$.
Smoothness of $\partial W$ shows that there is $\delta'\in(0,\delta)$ such that for each $e\in\mathbb S^{d-1}$,
\[
\{x\in\RR^d\,\mid\,x\cdot e \le x_e\cdot e -c^*(e)\delta' \text{ and } |x-x_e|\le 4c'\delta^{-1}\delta'\} \subseteq W,
\]
where $c'$ is from Lemma \ref{L.4.2}.

Let us now fix any $\om$ from the full-measure set $\Om_0$ from the Remark after Theorem \ref{T.1.9}.  Pick $\Lambda$ so that $\frac\delta{2\delta'}W\subseteq B_\La(0)$, and also any $e\in\mathbb S^{d-1}$ and any $t_0\gg 1$.  We can now use Lemma \ref{L.4.2} with $t=2\delta^{-1}\delta't_0$, any $y\in B_{2c'\delta^{-1}\delta't_0}(t_0x_e)$, and functions $u_{t_0,e}$ and $u_{t_0}$ in place of $u$ and $u'$, where  $u_{t_0,e}(0,\cdot)=\frac{1+\theta_0}2 \chi_{\{x\cdot e< (x_e\cdot e -c^*(e)\delta')t_0 \}}$ and $u_{t_0}(0,\cdot)=\frac{1+\theta_0}2 \chi_{t_0W}$, to find
\[
\inf_{x\in B_{2c'\delta^{-1}\delta't_0}(t_0x_e)}  [u_{t_0}(2\delta^{-1}\delta't_0,x) - u_{t_0,e}(2\delta^{-1}\delta't_0,x) ] \ge -c' e^{-2m'\delta^{-1}\delta't_0}.
\]
From the remark after Theorem \ref{T.1.9} we now obtain
\[
\lim_{t_0\to \infty}  \inf_{e\in\mathcal T} \inf_{x\in B_{2c'\delta^{-1}\delta't_0}(t_0x_e)\cap\{x\cdot e< (x_e\cdot e + 2c^*(e)\delta'(\delta^{-1}-1))t_0\}} u_{t_0,e}(2\delta^{-1}\delta't_0,x) = 1.
\]
(In fact, only taking finitely many directions $e$ would suffice here.)
From $x_e\cdot e\le (1-\delta)c^*(e)$ we then obtain
\[
\lim_{t_0\to \infty}  \inf_{e\in\mathcal T} \inf_{x\in B_{2c'\delta^{-1}\delta't_0}(t_0x_e)\cap\{x\cdot e< x_e\cdot e(1+ 2\delta^{-1}\delta')t_0\}} u_{t_0}(2\delta^{-1}\delta't_0,x) = 1.
\]
But since
\[
(1+ 2\delta^{-1}\delta')t_0 W \subseteq t_0W \cup \bigcup_{e\in\mathcal T} [ B_{2c'\delta^{-1}\delta't_0}(t_0x_e)\cap\{x\cdot e< x_e\cdot e(1+ 2\delta^{-1}\delta')t_0\} ],
\]
this and the first claim in Lemma \ref{L.3.11} yield 
\[
\lim_{t_0\to \infty} \inf_{x\in (1+ 2\delta^{-1}\delta')t_0 W} u_{t_0}(2\delta^{-1}\delta't_0,x) = 1.
\]
This and the comparison principle show that if $t_0$ is large enough and a solution $u$ to \eqref{1.10} satisfies
\[
\inf_{(t,x)\in[t_1,t_1+2t_0]\times t_0W} u(t,x)\ge \frac {1+\theta_0}2
\]
for some $t_1$ (which $u_\om$ from Definition \ref{D.1.3} does by Lemma \ref{L.3.11}), then
\[
\lim_{t\to\infty} \inf_{x\in (t-t_1-t_0)W} u(t,x)=1.
\] 
But this implies  \eqref{eq:lbsxi'} for any $\delta>0$ such that $(1-\frac\delta 2)\mathcal S\subseteq W$.  Since $W\subseteq \mathcal S$ was an arbitrary compact, the proof of the first claim is finished.

To prove the second claim, assume now that also \eqref{eq:fsdef} holds for each $e\in \mathbb{S}^{d-1}$.
The hypotheses show that the Hamiltonian 
\begin{equation*}
H(p)=-c^{*}\left(-\frac{p}{|p|}\right)|p|
\end{equation*}
in \eqref{eq:hjeq}  satisfies
\begin{equation*}
\tilde{H}(p):=-H(-p)=c^{*}\left(\frac{p}{|p|}\right)|p|=\sup_{e'\in \mathbb{S}^{d-1}} w(e')e'\cdot p.
\end{equation*}
So if $\overline v$ and $\overline{v}_{0}$ are as in \eqref{1.40'}, then $\tilde v:=-\overline v$ solves 
\begin{align*}
\tilde v_{t}+\tilde{H}(\nabla \tilde v)=0&\qquad \text{on } (0, \infty)\times \RR^{d},\\
\tilde v(0,x)=-\overline{v}_{0}(x)&\qquad \text{on } \RR^{d}
\end{align*}
with a convex Hamiltonian $\tilde H$.
Therefore the function
\begin{equation}\label{eq:lagrangian}
L(p):=(\tilde{H})^{*}(p):=\sup_{q\in \RR^{d}}\left[p\cdot q-c^{*}\left(\frac{q}{|q|}\right)|q|\right]=\begin{cases} 0&\text{if}\quad |p|\leq w\left(\frac{p}{|p|}\right),\\
\infty&\text{if}\quad |p|> w\left(\frac{p}{|p|}\right)
\end{cases}
\end{equation}
is the associated Lagrangian (with the last equality due to convexity of $\mathcal S$, by Lemma \ref{L.4.0}) and $\tilde{H}(p)=L^{*}(p)$. 
The Hopf-Lax formula now yields the identity
 \begin{equation}\label{5.7}
\tilde v(t,x)=\inf_{y\in \RR^{d}} \left[tL\left(\frac{x-y}{t}\right)-\overline{v}_{0}(y)\right]
 \end{equation}
 for the (unique) viscosity solution $\tilde v$.
Since $L\left(\frac{x-y}{t}\right)=0$ precisely when $y\in x-t\overline{\mathcal{S}}$, we obtain
 \begin{equation}\label{5.8}
  \left\{x \,\mid\, \overline{v}(t,x)>0\right\}=A+ t\overline{\mathcal{S}} =A+t\mathcal{S}
 \end{equation}
(because $A$ is open) for any $t>0$.   That is, $\Theta^{A, c^*} = \Theta^{A, \mathcal{S}}$ and the proof is finished. Note also that we did not assume convexity of $A$ here. \end{proof}
%

%

We will now prove our first homogenization result,  Theorem \ref{T.4.3}.   When also \eqref{eq:fsdef} holds, we immediately obtain from it the claim in Theorem \ref{T.1.4}(iii) (and the corresponding part of Theorem \ref{T.1.8})  because 
$\overline{\Theta^{A, \mathcal{S}}} \setminus \Theta^{A, \mathcal{S}}=\partial \Theta^{A, \mathcal{S}}$ for convex $A$ and $\Theta^{A, c^*} = \Theta^{A, \mathcal{S}}$ due to \eqref{eq:fsdef}.

We will prove  Theorem \ref{T.1.4}(iii) without assuming \eqref{eq:fsdef} in Section~\ref{s:fstohom}.  In fact, Theorem \ref{T.4.3} will not be needed in the rest of the paper and can be skipped.  We include it only to demonstrate how one can prove homogenization in some settings without the need for the theory of discontinuous viscosity solutions in Section \ref{s:fstohom}.

Recall that for  $A\subseteq\bbR^d$ and $r\ge 0$, we define $A_r^0:=A\setminus \overline{B_r(\partial A)}$.

\begin{thm}  \label{T.4.3}
Assume Hypothesis H'
and that \eqref{4.0} has a strong front speed $c^{*}(e)$ satisfying \eqref{eq:fsdef} in each direction $e\in \mathbb{S}^{d-1}$ (e.g., if $\mathcal{S}$ has no corners, due to Theorem \ref{T.4.1}).
 If  $A\subseteq\RR^d$ is open, $A'={\rm ch}(A)$ is its convex hull, $\alpha>0$, and $u^\ve$ solves \eqref{4.0e} and  
\begin{equation} \label{4.4}
(\theta_0+\alpha)\chi_{A_{\psi(\ve)}^0+y_\ve} \le u^\ve(0,\cdot)\le (1-\alpha)\chi_{B_{\psi(\ve)}(A')+y_\ve}
\end{equation}
for each  $\ve>0$, with some $y_\ve\in B_{1/\alpha}$ and $\lim_{\ve\to 0}\psi(\ve)=0$, then 
\[
\lim_{\ve\to 0}  u^\ve(t,x+y_\ve)= 
\begin{cases}
1  & (t,x)\in \Theta^{A, \mathcal{S}},\\
0  & (t,x)\in ([0,\infty)\times \RR^d) \setminus \overline{\Theta^{A', \mathcal{S}}}
\end{cases}
\]
locally uniformly on $ ([0,\infty)\times\RR^d) \setminus (\overline{\Theta^{A', \mathcal{S}}} \setminus \Theta^{A, \mathcal{S}})$.
\end{thm}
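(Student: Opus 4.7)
The strategy is to prove the lower bound ($u^\ve(t,x+y_\ve) \to 1$ on $\Theta^{A,\mathcal{S}}$) and the upper bound ($u^\ve(t,x+y_\ve) \to 0$ on $([0,\infty)\times\RR^d) \setminus \overline{\Theta^{A',\mathcal{S}}}$) separately, and combine. Both target sets are open and disjoint, and their union equals $([0,\infty)\times\RR^d) \setminus (\overline{\Theta^{A',\mathcal{S}}} \setminus \Theta^{A,\mathcal{S}})$. Any compact $K$ in this union therefore splits as a disjoint union of two sets that are clopen in $K$, each itself compact and contained in one of the two components; thus it suffices to establish uniform convergence on each piece.

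For the lower bound, fix a compact $K \subseteq \Theta^{A,\mathcal{S}}$. Using openness of $A$ and compactness of $K$, pick an open $A_{0}$ with $\overline{A_{0}}\subseteq A$ compact and $K \subseteq \Theta^{A_{0},\mathcal{S}}$. Since $\mathrm{dist}(\overline{A_{0}}, \partial A) > 0$ and $\psi(\ve)\to 0$, for all sufficiently small $\ve$ we have $\overline{A_{0}} \subseteq A^{0}_{\psi(\ve)}$, and consequently $u^\ve(0,\cdot) \geq (\theta_{0}+\alpha)\chi_{A_{0} + y_\ve}$. Applying Theorem~\ref{T.3.6} to the open set $A_{0}$ with $\lambda = 1/\alpha$, together with the comparison principle, yields $u^\ve(t,x+y_\ve)\to 1$ locally uniformly on $\Theta^{A_{0},\mathcal{S}}$, and in particular uniformly on $K$.

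For the upper bound, fix a compact $K \subseteq ([0,\infty)\times\RR^d) \setminus \overline{\Theta^{A',\mathcal{S}}}$. By Lemma~\ref{L.4.0}, $\mathcal{S}$ is convex, and \eqref{eq:wulffdef} together with \eqref{eq:fsdef} gives $\sup_{z\in\mathcal{S}} z\cdot e = c^{*}(e)$ for every $e$; equivalently, $\mathcal{S}$ has no corners, so every $e \in \mathbb{S}^{d-1}$ is a unit outer normal of $\mathcal{S}$ and Theorem~\ref{T.4.1}(ii) applies in every direction. Now fix $(t_{0},x_{0}) \in K$. Since $A' + t_{0}\mathcal{S}$ is convex (a Minkowski sum of convex sets) and $x_{0} \notin \overline{A' + t_{0}\mathcal{S}}$, the Hahn--Banach separation theorem produces $e\in\mathbb{S}^{d-1}$, a necessarily finite number $s_{A'}(e) := \sup_{a\in A'} a\cdot e$, and $\delta > 0$ with
\[
x_{0}\cdot e > s_{A'}(e) + c^{*}(e)\, t_{0} + 3\delta.
\]
By continuity this inequality (with $2\delta$ in place of $3\delta$) persists on a relatively compact open neighborhood $U$ of $(t_{0}, x_{0})$. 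Set $y^{\ast}_\ve := y_\ve + (s_{A'}(e) + \psi(\ve))\,e$, which is uniformly bounded in $\ve$ (since $|y_\ve|\le 1/\alpha$ and $\psi(\ve)\to 0$), and note that
\[
u^\ve(0,\cdot) \leq (1-\alpha)\chi_{B_{\psi(\ve)}(A') + y_\ve} \leq (1-\alpha)\chi_{\{x\cdot e \leq y^{\ast}_\ve\cdot e\}}.
\]
Letting $\bar u^\ve_{z}$ solve \eqref{4.0e} with initial data $(1-\alpha)\chi_{\{x\cdot e\leq z\cdot e\}}$, the comparison principle gives $u^\ve \leq \bar u^\ve_{y^{\ast}_\ve}$. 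Theorem~\ref{T.4.1}(ii) applied to the family $\{\bar u^\ve_{z}\}$ in direction $e$, with $\lambda$ chosen to dominate $|y^{\ast}_\ve|$ for all small $\ve$, then yields $\bar u^\ve_{z}(t, x+z) \to 0$ uniformly in $|z|\leq\lambda$ and locally uniformly on $\{x\cdot e > c^{*}(e) t\}$. Translating variables via $y^{\ast}_\ve - y_\ve = (s_{A'}(e)+\psi(\ve))e$, we obtain $u^\ve(t, x+y_\ve) \to 0$ uniformly on every compact subset of $\{x\cdot e > c^{*}(e)t + s_{A'}(e) + \delta\}$ once $\psi(\ve) < \delta$, and this set contains $\overline{U}$. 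Finitely many such neighborhoods cover $K$, completing the upper bound.

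The principal technical obstacle is synchronizing three small quantities---$\psi(\ve)$, the separation margin $\delta$, and the bounded shift $y_\ve$---so that applying Theorem~\ref{T.4.1}(ii) to $\ve$-dependent shifted initial data yields a conclusion uniform in $\ve$ on a single $\ve$-independent compact set. A subsidiary point is ensuring finiteness of $s_{A'}(e)$ when $A$ is unbounded, which is automatic from the existence of the separating hyperplane.
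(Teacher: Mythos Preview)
Your approach is correct in substance and takes a somewhat different route from the paper's proof, but there is one justification that needs repair.

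\textbf{The flaw.} Your claim that ``\eqref{eq:fsdef} holding for all $e$ $\Leftrightarrow$ $\mathcal S$ has no corners'' is not right: the support function of any bounded convex set (say a square) is perfectly well-defined in every direction, so $\sup_{z\in\mathcal S} z\cdot e = c^*(e)$ for all $e$ does not by itself force differentiability of $\partial\mathcal S$. Fortunately you do not need this. The hypothesis of Theorem~\ref{T.4.3} already \emph{assumes} that \eqref{4.0} has a strong front speed $c^*(e)$ in every direction. The first paragraph of the proof of Theorem~\ref{T.4.1}(ii) observes that the $\ve$-scaled convergence conclusion of that theorem is \emph{equivalent} to the strong front speed property (via the rescaling $u^\ve(t,x;y)=u^1(t/\ve,x/\ve;y/\ve)$), and holds without $e$ being a unit outer normal once the strong front speed exists. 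So simply invoke the assumed strong front speeds directly and drop the ``no corners'' sentence.

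\textbf{Comparison with the paper.} With that fix, your argument is a valid alternative. The paper first reduces to $\psi(\ve)\equiv 0$ (using the built-in shifts $y_\ve$), obtains the lower bound from Theorem~\ref{T.3.6} in one line, and for the upper bound invokes the geometric Lemma~\ref{L.4.5} to write $\overline{A'+t_0\mathcal S}$ as an intersection of half-spaces, then covers $Q$ by a \emph{single} finite family using Theorem~\ref{T.4.4}(iii) (the direction-uniform version of Theorem~\ref{T.4.1}(ii)); unboundedness of $A'$ is handled by truncating to $B=A'\cap B_\Lambda$ and controlling the discrepancy via Lemma~\ref{L.4.2}. Your argument instead carries $\psi(\ve)$ through, handles the lower bound via an intermediate compactly-contained $A_0\subseteq A$, and for the upper bound works pointwise via Hahn--Banach separation at each $(t_0,x_0)$, applying Theorem~\ref{T.4.1}(ii) in the single separating direction and then covering $K$ by finitely many neighborhoods. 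This avoids both Lemma~\ref{L.4.5} and Theorem~\ref{T.4.4}(iii), and your separation argument handles unbounded $A'$ automatically (since $s_{A'}(e)<\infty$ is forced by the separating hyperplane), so you also avoid the truncation-plus-Lemma~\ref{L.4.2} step. The trade-off is that the paper's use of Theorem~\ref{T.4.4}(iii) is cleaner once that machinery is available, while your route is more self-contained.
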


To prove Theorem \ref{T.4.3}, we  will need the following geometric lemma.

\begin{lem} \label{L.4.5}
Assume that $\mathcal S\subseteq\RR^d$ containing 0 is open, bounded, and convex.  For each $e\in \mathbb{S}^{d-1}$, let $w(e)$ and $c^*(e)$ be given by \eqref{eq:wulffdef} and \eqref{eq:fsdef}.
If    $A\subseteq\RR^d$ is open and convex,
then for each $t\ge 0$ we have
\begin{align*}
A+t{\mathcal S} & =\bigcap_{e\in \mathbb{S}^{d-1}}  \{ x\cdot e < \sup_{y\in A} y\cdot e+c^*(e)t \},\\
\overline{A+t{\mathcal S}} & =\bigcap_{e\in \mathbb{S}^{d-1}}  \{ x\cdot e \le \sup_{y\in A} y\cdot e+c^*(e)t \}.
\end{align*}
\end{lem}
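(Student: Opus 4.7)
\textbf{Proof proposal for Lemma \ref{L.4.5}.}
The plan is to reduce both identities to the standard fact that an open convex subset $C\subseteq\bbR^d$ is determined by its support function $h_C(e):=\sup_{y\in C}y\cdot e\in(-\infty,\infty]$, namely
\begin{equation}\label{P.supp}
C=\bigcap_{e\in \mathbb S^{d-1}}\{x\cdot e<h_C(e)\},\qquad \overline C=\bigcap_{e\in \mathbb S^{d-1}}\{x\cdot e\le h_C(e)\},
\end{equation}
and to combine this with the identification of the support function of $\mathcal S$ with $c^*$.

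First I would verify that $h_{\mathcal S}(e)=c^*(e)$. Since $\mathcal S$ is given by \eqref{eq:wulffdef} and $w>0$,
\[
h_{\mathcal S}(e)=\sup_{e'\in\mathbb S^{d-1}}\sup_{s\in[0,w(e'))} s\,e'\cdot e=\sup_{e'\in\mathbb S^{d-1}} w(e')\,e'\cdot e=c^*(e)
\]
(the sup over $e'$ with $e'\cdot e\le 0$ contributes $0$, which is dominated by the value $w(e)>0$ attained at $e'=e$). For $t\ge 0$, the Minkowski-sum identity gives $h_{A+t\mathcal S}(e)=h_A(e)+t\,h_{\mathcal S}(e)=\sup_{y\in A}y\cdot e+tc^*(e)$. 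For $t>0$, the set $C:=A+t\mathcal S$ is open (sum of an open set with anything) and convex (sum of two convex sets); for $t=0$ we have $C=A$, which is open and convex by hypothesis. Applying \eqref{P.supp} to this $C$ yields both displayed formulas.

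It remains to justify \eqref{P.supp}. For the $\subseteq$ inclusion in the first identity, any $x\in C$ satisfies $B_\ve(x)\subseteq C$ for some $\ve>0$ because $C$ is open, hence $(x+\ve e)\cdot e\le h_C(e)$ and so $x\cdot e<h_C(e)$ for every $e\in\mathbb S^{d-1}$. For the reverse inclusion, suppose $x\notin C$. If $x\notin\overline C$, the Hahn–Banach separation theorem yields $e\in\mathbb S^{d-1}$ and $r\in\bbR$ with $C\subseteq\{y\cdot e\le r\}$ and $x\cdot e>r$, giving $x\cdot e>r\ge h_C(e)$. If $x\in\partial C$, a supporting hyperplane at $x$ produces $e\in\mathbb S^{d-1}$ with $y\cdot e\le x\cdot e$ for all $y\in C$, so $h_C(e)\le x\cdot e$. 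In either case $x\cdot e\not<h_C(e)$, contradicting membership in the right-hand intersection.

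For the closure identity, $\overline C$ is closed and convex, and any closed convex set is the intersection of the closed half-spaces containing it; since the collection of such half-spaces is exactly $\{y\cdot e\le h_C(e)\}_{e\in\mathbb S^{d-1}}$ (using $h_{\overline C}=h_C$), the second equality in \eqref{P.supp} follows. The argument handles potential unboundedness of $A$ automatically, since $h_A(e)=\infty$ makes the corresponding half-space equal to $\bbR^d$ and imposes no constraint. I do not foresee any serious obstacle; the only point requiring a small amount of care is the case analysis $x\notin\overline C$ vs.\ $x\in\partial C$, which is needed to turn the separating hyperplane into a \emph{strict} inequality involving $h_C$.
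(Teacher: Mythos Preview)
Your proof is correct and takes essentially the same approach as the paper: both directions rest on separating/supporting hyperplanes for the convex set $A+t\mathcal S$, together with the identification $h_{\mathcal S}(e)=c^*(e)$. The paper simply carries out the $\supseteq$ direction by hand---projecting $x\notin A+t\mathcal S$ onto $\overline{A+t\mathcal S}$ and choosing $e'$ with $c^*(e)=w(e')e'\cdot e$---whereas you package the same argument via the support-function characterization \eqref{P.supp}, which is a cosmetic rather than substantive difference.
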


\begin{proof}
Let $B$ be the above intersection.
If $x\in A+t{\mathcal S}$, then there are $y\in A$ and $z\in {\mathcal S}$ such that $x=y+zt$.  If $z=0$ then obviously $x\in B$; otherwise let $e'=\frac z{|z|}$.  
Then for any $e\in \mathbb{S}^{d-1}$ 
we have
\[
x\cdot e = y\cdot e + |z|te'\cdot e < \sup_{y'\in A} y'\cdot e + c^*(e)t
\]
by $|z|<w(e')$, \eqref{eq:fsdef}, and $c^*(e)>0$. Hence $A+t{\mathcal S}\subseteq B$.

If now $x\notin A+t{\mathcal S}$, let $x'\in A+t{\mathcal S}$ be the closest point from (the convex set) $\overline{A+t{\mathcal S}}$ to $x$.  Let $e\in \mathbb{S}^{d-1}$ be such that $x''\cdot e\le x'\cdot e  \le x\cdot e$ for any $x''\in \overline{A+t{\mathcal S}}$ (if $x\neq x'$, then $e=\frac{x-x'}{|x-x'|}$ works). Let $e'\in \mathbb{S}^{d-1}$ be such that $c^*(e)=w(e')e'\cdot e$.  Then for any $y\in A$ we have $y+w(e')e't\in \overline{A+t{\mathcal S}}$, so 
\[
y\cdot e + c^*(e)t  = (y+ w(e')e't)\cdot e\le  x\cdot e.
\]
Therefore $x\notin B$, so $A+t{\mathcal S}\supseteq B$.  
This proves the first claim, and the proof of the second is analogous (noticing also that $\overline{A+{t\mathcal S}}=\overline A + t\overline{\mathcal S}$).
\end{proof}

\begin{proof}[Proof of Theorem \ref{T.4.3}]
Obviously, these are two separate results, with the first inequality in \eqref{4.4} yielding the convergence to 1 and the second inequality yielding convergence to 0. 
Hence, due to both convergences being locally uniform and due to the inclusion of the shifts $y_\ve$, it suffices to consider in both proofs $\psi(\ve)=0$ for all $\ve>0$.

The  convergence to 1 then follows directly from Theorem \ref{T.3.6}.  For the convergence to 0, recall that $A'=\text{ch}(A)$, and let $K\subseteq ([0,\infty)\times\RR^d) \setminus \overline{ \Theta^{A', \mathcal{S}}}$ be a compact set.  We can assume without loss that 
$K=[0,t_0]\times Q$ for some $t_0>0$ and some  compact $Q\subseteq \RR^d\setminus\overline {A'+t_0 \mathcal S }$ because $K$ is contained in a finite union of such sets (with distinct $t_0$).  

With $c'$ from Lemma \ref{L.4.2}, let $\Lambda>0$ be such that $Q\subseteq B_{\Lambda-2c't_0}$ and let $B:=A'\cap B_\Lambda$ (which is open, bounded, and convex).  Then $Q\,\cap\, \overline {B+t_0 \mathcal S }=\emptyset$, so by Lemma \ref{L.4.5}, there is $\delta'>0$ such that 
\[
Q \cap \bigcap_{e\in \mathbb{S}^{d-1}}  \{ x\cdot e \le  l_B(e)+c^*(e)t_0+\delta' \}
 = \emptyset,
\]
with $l_B(e):=\sup_{y\in B} y\cdot e$.
Since $l_B$ and $c^*$ are continuous by their definitions, there is $M$ such that  $Q_M:=B_M\cap\{x_1\ge \frac 1M\}\subseteq\bbR^d$ satisfies 
\[
Q\subseteq \bigcup_{e\in \mathbb{S}^{d-1}}  (R_eQ_M + l_B(e)e+c^*(e)t_0e )
\]
But then an application of Theorem \ref{T.4.4}(iii)
with $\lambda:=\Lambda+\frac 1\alpha$ and
$y=l_{B}(e)e+y_\ve$ for any $\ve>0$ and $e\in\mathbb S^{d-1}$
shows that $w^\ve$ solving \eqref{4.0e} and
\[
 w^\ve(0,\cdot)= (1-\alpha)\chi_{B+y_\ve}
\]
satisfies
\[
\lim_{\ve\to 0} \sup_{(t,x)\in[0,t_0]\times Q} w^\ve(t,x+y_\ve)= 0.
\]
Since $Q\subseteq B_{\Lambda-2c't_0}$ and $B:=A'\cap B_\Lambda$, we have from this and Lemma \ref{L.4.2},
\[
0\le \lim_{\ve\to 0} \sup_{(t,x)\in K} u^\ve(t,x+y_\ve) \le \lim_{\ve\to 0}  c'e^{-m'(2c'\ve^{-1}t_0-c'\ve^{-1}t_0)}=0.
\]
This establishes the convergence to 0 claim, and the proof is finished.
\end{proof}

\section{Homogenization For General Initial Sets}\label{s:fstohom}

In this section we prove Theorems \ref{T.1.4}(iii) and \ref{T.1.6}(ii), as well as the same results for \eqref{1.10}.  We will again consider a fixed $\om\in\Om$ (which is again dropped from the notation), and hence show that if strong exclusive front speeds $c^*(e)$ in all directions $e$ exist for \eqref{4.0}, then solutions to \eqref{4.0e} with appropriate families of initial conditions converge to (discontinuous) viscosity solutions of the Hamilton-Jacobi equation \eqref{eq:hjeq} as $\ve\to 0$.
Specifically, if the initial conditions satisfy \eqref{4.4b} below, then solutions converge to $\chi_{\Theta^{A,c^*}}$, with the set $\Theta^{A,c^*}$ from \eqref{1.40'} obtained from solving \eqref{eq:hjeq} with initial condition \eqref{eq:hjeq5'}.



\subsection{Hamilton-Jacobi equations and viscosity solutions}
We begin by recalling some basic properties of Hamilton-Jacobi equations and their viscosity solutions. We want to consider the PDE \eqref{eq:hjeq}
with initial condition 
\begin{equation}\label{5.0}
\overline{u}(0,\cdot)=\chi_{A},
\end{equation}
where $A\subseteq\RR^d$ is an open set.
As this results in us having to consider discontinuous functions, we will have to employ the notion of viscosity solutions (the ones we use here are also called {\it discontinuous viscosity solutions}).  With $u^*,u_*:[0, \infty)\times\RR^{d}\rightarrow \RR$ being the upper and lower semicontinuous envelopes
of a function $u:(0, \infty)\times\RR^{d}\rightarrow \RR$ that is bounded on bounded sets, we have the following definition.

\begin{definition}[\cite{capdolbook}]  \label{D.5.1}
A function $u:(0, \infty)\times\RR^{d}\rightarrow \RR$ is a {\it viscosity subsolution} to \eqref{eq:hjeq} if for any $\phi\in C^{1}((0, \infty)\times \RR^{d})$ we have  
\begin{equation}\label{6.2}
\phi_{t}(t_{0}, x_{0})\leq c^{*}\left(-\frac{\nabla \phi(t_{0}, x_{0})}{|\nabla \phi(t_{0}, x_{0})|}\right)|\nabla \phi(t_{0}, x_{0})|
\end{equation}
whenever $u^{*}-\phi$ has a local maximum at $(t_{0}, x_{0})$.
Similarly,  $u$ is a {\it viscosity supersolution} to \eqref{eq:hjeq} if  for any $\phi\in C^{1}((0, \infty)\times \RR^{d})$ we have  
\begin{equation}\label{6.3}
\phi_{t}(t_{0}, x_{0})\geq c^{*}\left(-\frac{\nabla \phi(t_{0}, x_{0})}{|\nabla \phi(t_{0}, x_{0})|}\right)|\nabla \phi(t_{0}, x_{0})|
\end{equation}
whenever $u_{*}-\phi$ has a local minimum at $(t_{0}, x_{0})$.

We say that $u$ is a {\it viscosity solution} to \eqref{eq:hjeq} when it is both a viscosity subsolution and a viscosity supersolution.  We also say that $u$ satisfies initial condition \eqref{5.0} whenever (with $A^0={\rm int}(A)$)
\[
\chi_{A^0}\le u_*(0,\cdot)\le u^*(0,\cdot)\le \chi_{\bar A}.
\]
\end{definition}

\medskip

Since we can always add a constant to $\phi$, a function  $u$ is a viscosity subsolution provided \eqref{6.2} holds whenever $\phi\in C^1$ satisfies $\phi(t_0,x_0)=u^*(t_0,x_0)$ as well as  $\phi\ge u^*$ on a neighborhood of $(t_0,x_0)$, and $u$ is a viscosity supersolution provided \eqref{6.3} holds whenever $\phi\in C^1$ satisfies $\phi(t_0,x_0)=u_*(t_0,x_0)$ as well as  $\phi\le u_*$ on a neighborhood of $(t_0,x_0)$.

While
the PDE \eqref{eq:hjeq} satisfies a comparison principle for upper semicontinuous subsolutions and lower semicontinuous supersolutions  (see Theorem \ref{thm:soravia}(iii) below),
the question of uniqueness of discontinuous viscosity solutions
is more subtle
(see, e.g., \cite{bss} for some counterexamples).  However,  the initial value problem \eqref{eq:hjeq}+\eqref{5.0} does admit a unique (discontinuous) viscosity solution (up to sets of measure 0) for certain initial conditions and under appropriate hypotheses on $c^{*}$.  This turns out to be closely related to 
the following definition.



\begin{definition}\label{def:noint}
Let $\overline{v}$ be the unique continuous viscosity solution to \eqref{eq:hjeq} with initial condition $\overline{v}(0,\cdot)=\overline{v}_{0}$, where $\overline{v}_{0}:\RR^{d}\rightarrow \RR$ is any uniformly continuous function satisfying
\begin{equation}\label{eq:hjeq5'}
\overline{v}_{0}(x)=\begin{cases}>0 & x\in A^0,\\
0& x\in \partial A,\\
<0& x\in \RR^{d}\setminus \overline{A}.
\end{cases}
\end{equation}
We say that the initial value problem \eqref{eq:hjeq}+\eqref{5.0} {\it does not develop an interior} if for any such $\overline{v}_0$, we have
\begin{equation}\label{eq:noint}
\left|\left\{(t,x)\,\mid\, \overline{v}(t,x)=0\right\}\right|=0, 
\end{equation}
with $|\cdot|$ the Lebesgue-measure on $\RR^{d+1}$.
\end{definition}

This definition differs slightly from those in \cite{bss, barlestakis, takisfronts, soravia} and other references; see Remark 2 after Theorem \ref{thm:soravia} for a discussion of this.
Also observe that since \eqref{eq:hjeq} satisfies both uniqueness and a comparison principle for continuous viscosity solutions, it follows that the sets $\{\overline v>0\}$, $\{\overline v<0\}$, $\{\overline v=0\}$ for continuous viscosity solutions to \eqref{eq:hjeq}+\eqref{eq:hjeq5'} are independent of the choice of $\overline{v}_0$ \cite[Theorem 1.4]{takisfronts}.  Therefore, as mentioned in the introduction, 
we define $\Theta^{A, c^{*}}$ via \eqref{1.40'},
with $\overline v$ any such solution.

We now have the following theorem, which is a collection (and combination) of results by Barles, Soner, and Souganidis \cite[Theorems 2.1 and 4.1]{bss}, Crandall, Ishii, and Lions \cite[Theorem 8.2]{users}, Souganidis \cite[Theorem 1.10]{takisfronts}, and Soravia \cite{soravia}.  In particular, it implies that in our setting, \eqref{eq:hjeq} admits a unique discontinuous viscosity solutions for any relevant initial condition.

\begin{thm}
\label{thm:soravia}
\leavevmode
Let $c^{*}:\mathbb S^{d-1}\to \mathbb R$ be Lipschitz  and let $A\subseteq\RR^d$ be open. 
\begin{enumerate}[(i)]
\item The problem 
\eqref{eq:hjeq}{\rm+}\eqref{5.0} has a unique (up to space-time sets of measure 0) viscosity solution $\overline u$  if and only if it does not develop an interior. This solution is then given by $\overline u=\chi_{\left\{\overline{v}>0\right\}}$,
where $\overline{v}$ is any solution to \eqref{eq:hjeq}{\rm+}\eqref{eq:hjeq5'}.
\item If $c^{*}(e)>0$ for all $e\in \mathbb{S}^{d-1}$, then \eqref{eq:hjeq}{\rm +}\eqref{5.0} does not develop an interior. 
\item If $u, v:[0, \infty)\times \RR^{d}\to \RR$ are a bounded upper semicontinuous subsolution and a bounded lower semicontinuous supersolution to \eqref{eq:hjeq}, respectively, and $u(0,x)\leq v(0,x)$ holds for each $x\in\RR^d$, then $u(t,x)\leq v(t,x)$ for all $(t,x)\in[0,\infty)\times\RR^d$. 
\end{enumerate}
\end{thm}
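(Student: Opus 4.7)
The plan is to derive the three parts by verifying that the geometric Hamiltonian $H(p) := -c^*(-p/|p|)|p|$ (extended by $H(0) := 0$) satisfies the structural hypotheses of the cited references. Since $c^*$ is Lipschitz on $\mathbb{S}^{d-1}$, the function $H$ is continuous on $\RR^d$, positively $1$-homogeneous, and Lipschitz on $\RR^d\setminus B_\delta(0)$ for every $\delta>0$ (with constants depending only on $\|c^*\|_\infty$ and the Lipschitz constant of $c^*$); moreover $H$ does not depend on $(t,x,u)$. These are the only structural features that will enter below.

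First I would obtain (iii) by invoking the classical comparison principle for first-order Hamilton--Jacobi equations with continuous $p$-dependence and no $(t,x,u)$-dependence, as in \cite[Theorem~8.2]{users}. The standard doubling-of-variables argument, applied to the bounded USC subsolution $u$ and bounded LSC supersolution $v$, reaches the usual contradiction at a maximum of $u(t,x) - v(s,y) - \alpha|x-y|^2 - \beta|t-s|^2 - \eta(t+s)$; uniform continuity of $H$ on compacts (which is all that enters since $u,v$ are bounded) closes the argument without any assumption on the sign of $c^*$.

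For (i) I would use the level-set framework of Souganidis \cite[Theorem~1.10]{takisfronts} in combination with the geometric character of $H$. Given (iii), the IVP \eqref{eq:hjeq}+\eqref{eq:hjeq5'} has a unique continuous viscosity solution $\overline v$ for any admissible $\overline v_0$, and (as the paper already recalls from \cite{takisfronts}) the sets $\{\overline v > 0\}$, $\{\overline v < 0\}$, and $\{\overline v = 0\}$ at each $t\ge 0$ are independent of the choice of $\overline v_0$. Because $H$ is positively $1$-homogeneous, a direct test-function computation shows that $\overline u:=\chi_{\{\overline v > 0\}}$ is a discontinuous viscosity solution to \eqref{eq:hjeq}+\eqref{5.0}: the test inequalities at points where $\nabla\phi\neq 0$ inherit from those for $\overline v$, while the case $\nabla\phi=0$ is handled by $H(0)=0$. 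Its USC and LSC envelopes agree off $\{\overline v=0\}$, so under \eqref{eq:noint} this yields the unique solution up to Lebesgue-null sets. Conversely, if $\{\overline v=0\}$ has positive measure, then $\chi_{\{\overline v > 0\}}$ and $\chi_{\{\overline v \ge 0\}}$ are two viscosity solutions to \eqref{eq:hjeq}+\eqref{5.0} that differ on a set of positive measure, proving the equivalence.

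The main obstacle, and the most substantive ingredient of the theorem, is (ii): no interior develops when $c^* > 0$. My plan here is to cite \cite[Theorems~2.1 and 4.1]{bss} after recording that $c_{\min}:=\min_{\mathbb{S}^{d-1}} c^* > 0$ (by continuity and positivity on a compact), so that $H$ is a geometric Hamiltonian satisfying their hypotheses. The underlying mechanism is that strict positivity of the normal velocity allows one to compare $\overline v(t+\delta,\cdot)$ and $\overline v(t-\delta,\cdot)$ to spatial translates of $\overline v(t,\cdot)$ using (iii), sandwiching the zero level set between two strictly moving fronts whose separation is linear in $\delta$; the resulting thickening is incompatible with the zero set having positive space-time Lebesgue measure. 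Since this is exactly the scenario treated in \cite{bss}, (ii) reduces to the translation of our $H$ into their framework, and the theorem follows.
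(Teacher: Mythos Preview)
Your proposal is correct and matches the paper's treatment: the paper does not give its own proof of this theorem but presents it as a collection of results from \cite{bss}, \cite{users}, \cite{takisfronts}, and \cite{soravia}, and your approach is precisely to verify that the geometric Hamiltonian $H(p)=-c^*(-p/|p|)|p|$ meets the structural hypotheses of those references and then invoke them. Your added detail---checking continuity and positive $1$-homogeneity of $H$, noting that the doubling-of-variables argument for (iii) needs only continuity of $H$ since there is no $(t,x,u)$-dependence, and recording $c_{\min}>0$ for (ii)---is exactly the translation step the paper leaves implicit.
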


{\it Remarks.}  
1.  In \cite{bss, takisfronts}, the statement of Theorem \ref{thm:soravia}(ii) is formulated in terms of the PDE \eqref{eq:hjeq} with initial condition $\overline{u}(0,\cdot)=\chi_{A^0}-\chi_{\bar A^{c}}$, and the unique viscosity solution is $\overline{u}=\chi_{\left\{\overline{v}>0\right\}}-\chi_{\left\{\overline{v}<0\right\}}$ . The two statements are clearly equivalent.
\smallskip

2. In some earlier papers, the no interior condition was stated in the topological sense, namely
\begin{equation} \label{5.6}
\partial \left\{\overline{v}>0\right\}=\left\{\overline{v}=0\right\}=\partial \left\{ \overline{v}< 0\right\}
\end{equation}
for any solution to \eqref{eq:hjeq}{\rm+}\eqref{eq:hjeq5'}.
The two definitions coincide in most situations of interest but differ in some pathological cases.  For instance, when $c^*\equiv 0$ (so \eqref{eq:hjeq} is $\overline{u}_t=0$) and $A=(0,1)\cup(1,2)\subseteq\RR$, then $v(t,\cdot)=\overline{v}_0$ for each $t>0$, so \eqref{eq:noint} holds but \eqref{5.6} fails (and $\overline{u}(t,x)=\chi_A(x)$ is the only viscosity solution to \eqref{eq:hjeq}{\rm+}\eqref{5.0}).  On the other hand, if $c^*\equiv 0$ and $A,B\subseteq\RR$ are two open sets with $A\cup B$ dense, $\partial A=\partial B$, and $|\partial A|>0$, then again $v(t,\cdot)=\overline{v}_0$ for each $t>0$, so \eqref{eq:noint} fails but \eqref{5.6} holds (and $\overline{u}(t,x)=\chi_{A'}(x)$ is a viscosity solution to \eqref{eq:hjeq}{\rm+}\eqref{5.0} whenever $A\subseteq A'\subseteq \overline{A}$).  The proof of Theorem \ref{thm:soravia}(i,ii) in fact applies with our Definition \ref{def:noint} (see, e.g., \cite{bss}), which justifies its use here.
\smallskip

3.  Note that (ii) implies $|\partial\Theta^{A, c^{*}}|=0$.
\smallskip

\subsection{Homogenization for \eqref{genhomeq} and \eqref{1.10}}

We will again first consider \eqref{4.0}, with the following hypotheses.

\medskip
{\bf Hypothesis H''.}  (i) Assume H'(i).

(ii) Lemma \ref{L.4.2} holds for solutions to \eqref{4.0}.

(iii) The PDE \eqref{4.0} has a strong exclusive front speed $c^*(e)>0$ in each direction $e\in\mathbb S^{d-1}$, in the sense of Remark 3 after Hypothesis H'.
\medskip

{\it Remark.}  Note that we do not  assume the second claim in H'(ii) here.  Its role will instead be played by the assumption $c^*(e)>0$.\smallskip

We now reformulate the last claim in Theorem \ref{T.1.8}(ii) (corresponding to Theorem \ref{T.1.6}(ii)) in terms of \eqref{4.0} and Hypothesis H''.  Note that with $\theta_0$ chosen to satisfy H''(iii), we can  replace $\thet_0+\alpha$ in the statement of the theorem by just $\theta_0$ (see Remark 2 after Hypothesis H').

\begin{thm}\label{T.5.3}
Assume Hypothesis $H''$.  If $A\subseteq\bbR^d$ is open, $R>0$, and $u^\ve$ solves \eqref{4.0e} and
\begin{equation} \label{4.4b}
\theta_0\chi_{A_{\psi(\ve)}^0+y_\ve} \le u^\ve(0,\cdot)\le \chi_{B_{\psi(\ve)}(A)+y_\ve} + \psi(\ve) \chi_{\bbR^d\setminus(B_{\psi(\ve)}(A)+y_\ve)}
\end{equation}
for each $\ve>0$, with some $y_\ve\in B_{R}(0)$ and $\lim_{\ve\to 0}\psi(\ve)=0$,
then
\[
\lim_{\ve\to 0}  u^\ve(t,x+y_{\ve})= \chi_{\Theta^{A,c^*}}(t,x)
\]
locally uniformly on $\left([0, \infty)\times \RR^{d}\right)\setminus \partial \Theta^{A,c^*}$.
\end{thm}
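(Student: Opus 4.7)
\textbf{Proof plan for Theorem \ref{T.5.3}.} The strategy is the half-relaxed limits method of Barles-Perthame-Souganidis, combined with the generalized front propagation framework of Barles-Souganidis \cite{barlestakis}, adapted to use the strong exclusive front speeds from H''(iii) (rather than explicit planar fronts). First, set $v^\varepsilon(t,x) := u^\varepsilon(t, x+y_\varepsilon)$ and define
\begin{equation*}
\bar u^*(t,x) := \limsup_{(\varepsilon,s,y) \to (0^+,t,x)} v^\varepsilon(s,y), \qquad \bar u_*(t,x) := \liminf_{(\varepsilon,s,y) \to (0^+,t,x)} v^\varepsilon(s,y),
\end{equation*}
which are USC and LSC respectively and take values in $[0,1]$. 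Since $c^*(e)>0$ for all $e$, Theorem \ref{thm:soravia}(ii) ensures that \eqref{eq:hjeq}{\rm +}\eqref{5.0} does not develop an interior, so $\partial\Theta^{A,c^*}$ has zero measure. It therefore suffices to prove $\bar u^* \leq \chi_{\overline{\Theta^{A,c^*}}}$ and $\bar u_* \geq \chi_{\Theta^{A,c^*}}$, as these pinch the limit to $\chi_{\Theta^{A,c^*}}$ off of $\partial\Theta^{A,c^*}$ and deliver local uniform convergence (by USC/LSC of the half-relaxed limits).

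Next I would establish the initial-time bounds. From the upper half of \eqref{4.4b} and Lemma \ref{L.4.2} (applied with $u'\equiv\psi(\varepsilon)$ on $\bbR^d\setminus(B_{\psi(\varepsilon)}(A)+y_\varepsilon)$ and an $\varepsilon$-time-rescaling), one gets $\bar u^*(0,x)=0$ for every $x\notin\overline A$. From the lower half of \eqref{4.4b} one trivially has $\bar u_*(0,x)\ge\theta_0$ for $x\in A^0$. These provide the boundary data needed to compare with the viscosity solution of \eqref{eq:hjeq} (after promoting $\theta_0$ to $1$ via the propagation step below, since the strong exclusive front speed property guarantees that data $\ge\theta_0$ on large enough sets are pushed up to $1$ behind the asymptotic front).

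The core of the proof is showing that $\bar u^*$ is a viscosity subsolution of \eqref{eq:hjeq} on $\{\bar u^*>0\}$ and $\bar u_*$ is a viscosity supersolution of \eqref{eq:hjeq} on $\{\bar u_*<1\}$, in the usual sense of Definition \ref{D.5.1}. I will focus on the subsolution claim; the supersolution case is symmetric. Suppose $\phi\in C^1$ and $\bar u^*-\phi$ has a strict local maximum at $(t_0,x_0)$ with $\bar u^*(t_0,x_0)>0$ and $\nabla\phi(t_0,x_0)\neq 0$. Argue by contradiction: if
\begin{equation*}
\phi_t(t_0,x_0) \;>\; c^*\!\Bigl(-\tfrac{\nabla\phi(t_0,x_0)}{|\nabla\phi(t_0,x_0)|}\Bigr)\,|\nabla\phi(t_0,x_0)|,
\end{equation*}
then with $e:=-\nabla\phi(t_0,x_0)/|\nabla\phi(t_0,x_0)|$ the level sets of $\phi$ near $(t_0,x_0)$ advance in direction $e$ at normal speed strictly greater than $c^*(e)$. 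Using the strong exclusive front speed in direction $e$ (Hypothesis H''(iii) via Remark 3 after H'), construct from the solutions $u(\cdot,\cdot;y,e,\alpha)$ an $\varepsilon$-parabolic supersolution $w^\varepsilon$ of \eqref{4.0e} by rotation, translation, and $\varepsilon$-rescaling, in such a way that on the parabolic boundary of a small (but $\varepsilon$-wide in rescaled coordinates) cylinder around $(t_0,x_0)$ we have $w^\varepsilon \ge v^\varepsilon$, while at $(t_0,x_0)$ the uniform bound on $u(\cdot,\cdot;y,e,\alpha)$ ahead of its front forces $w^\varepsilon(t_0,x_0)\le\beta_{K,e}(\alpha)+o(1)$ as $\varepsilon\to0$. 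Choosing $\alpha$ small makes $\beta_{K,e}(\alpha)<\bar u^*(t_0,x_0)$, contradicting $v^\varepsilon\le w^\varepsilon$ and the definition of $\bar u^*$.

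Finally, apply the comparison principle Theorem \ref{thm:soravia}(iii) to $\bar u^*$ (USC subsolution) against $\chi_{\overline{\Theta^{A,c^*}}}$, and to $\bar u_*$ (LSC supersolution) against $\chi_{\Theta^{A,c^*}}$, using the initial data identified in the previous paragraph and the representation from Theorem \ref{thm:soravia}(i). This yields the sandwich $\chi_{\Theta^{A,c^*}}\le\bar u_* \le \bar u^*\le \chi_{\overline{\Theta^{A,c^*}}}$, from which the locally uniform convergence on $([0,\infty)\times\bbR^d)\setminus\partial\Theta^{A,c^*}$ follows, with the value at $t=0$ handled by the initial bounds above. \emph{The principal obstacle} is the barrier construction in Step~3: unlike classical Barles-Souganidis, we have no explicit pulsating or traveling front, only the abstract strong exclusive front speed. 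The construction must use $u(\cdot,\cdot;y,e,\alpha)$ on an $O(\varepsilon^{-1})$ space-time window where the defining asymptotic of Definition \ref{D.1.5} becomes effective, while simultaneously approximating the level sets of $\phi$ by a hyperplane moving with the right supercritical speed, and the quantitative calibration of $\alpha$, of the hyperplane shift, and of the cylinder size is the delicate point.
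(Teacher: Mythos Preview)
Your overall architecture (half-relaxed limits, sub/supersolution test, comparison) matches the paper, but there are two genuine gaps that the paper's slightly different setup is designed to avoid.

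\medskip
\textbf{Gap 1: the barrier step does not close when $\bar u^*$ takes intermediate values.} In your Step~3 you want to compare $v^\varepsilon$ with the $\alpha$-barrier $u(\cdot,\cdot;y,e,\alpha)$ on the parabolic boundary of a cylinder. For that you need $v^\varepsilon\le\alpha$ on the ``ahead'' part of the boundary (where the barrier equals $\alpha$). But all you know from $\bar u^*-\phi$ having a maximum at $(t_0,x_0)$ is that $\bar u^*\le\phi$ nearby, and $\phi$ is close to $\bar u^*(t_0,x_0)\in(0,1)$, not to $0$. So you cannot force $v^\varepsilon\le\alpha$ for small $\alpha$, and the comparison with the exclusive-front barrier fails. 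The paper sidesteps this by \emph{not} testing $\bar u^*$ itself: it defines
\[
\Theta^2:=\bigl\{\limsups u^\varepsilon=0\bigr\}^0,\qquad \Theta^1:=\bigl\{\liminfs u^\varepsilon=1\bigr\}^0,
\]
and shows that $\chi_{[(0,\infty)\times\bbR^d]\setminus\Theta^2}$ is a subsolution and $\chi_{\Theta^1}$ a supersolution. On $\partial\Theta^2$ the contradiction argument works precisely because a nearby slab lies in $\Theta^2$, so $v^\varepsilon<\alpha$ there for small $\varepsilon$, and the $\alpha$-barrier dominates. This is the key reason to pass to the $\{0,1\}$-valued indicators rather than to $\bar u^*,\bar u_*$.

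\medskip
\textbf{Gap 2: the initial-data comparison.} You have $\bar u_*(0,x)\ge\theta_0$ on $A^0$, not $\ge 1$; the half-relaxed liminf at $t=0$ includes sequences with $s_\varepsilon=0$, so you cannot ``promote $\theta_0$ to $1$'' at the initial slice. Hence you cannot apply Theorem~\ref{thm:soravia}(iii) with $(\chi_{\Theta^{A,c^*}})^*(0,\cdot)=\chi_{\overline A}$ below $\bar u_*$. The paper handles this differently: using the strong front speed it shows $B_{st}(A)\subseteq\Theta^1_t$ for all $t>0$, so $(\chi_{\Theta^1})_*(0,\cdot)=\chi_{(\overline A)^0}$. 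Then, because the sub- and supersolution are still not ordered at $t=0$, it compares the subsolution $\chi_{\Theta^*}$ against the time-shifted supersolution $\chi_{\Theta_*}(\cdot+h,\cdot)$ for each $h>0$ and lets $h\to 0$. That time-shift trick is essential and is missing from your outline.

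\medskip
In short: replace $\bar u^*,\bar u_*$ by the indicator functions of $\Theta^1,\Theta^2$, treat the case $\nabla\phi(t_0,x_0)=0$ separately (your sketch only covers $\nabla\phi\neq 0$), and use the time-shift in the final comparison. With those changes your plan becomes essentially the paper's proof.
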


\begin{proof}[Proof of Theorem \ref{T.1.6}(ii) and of the corresponding part of Theorem \ref{T.1.8}]
This is immediate from  Theorem \ref{T.5.3} applied to those $\om\in\Om$ for which \eqref{1.10} (or specifically \eqref{genhomeq}) has strong exclusive front speed $c^*(e)$ in each direction $e\in\mathbb S^{d-1}$.   The set of such $\om$ has full measure by the remark after Theorem \ref{T.1.9}.
\end{proof}


\begin{proof}[Proof of Theorem \ref{T.5.3}]
Since $\Lambda$ in H''(iii) is arbitrary (and hence can be replaced by $\Lambda+R$), we can assume $y_{\ve}=0$
without loss of generality.  
Also, let 
$c'> \|c^*\|_\infty$ be as in Lemma~\ref{L.4.2} (note that Hypothesis H''(ii) implies that $\|c^*\|_\infty\le c'$, and increasing $c'$ keeps Lemma~\ref{L.4.2} valid).

Given a family of functions $\left\{u^{\ve}\right\}$, we let their half-relaxed limits (introduced in \cite{barlesperthame1,barlesperthame2}) be
\begin{align*}
\limsups u^{\ve}(t, x)&=\sup\left\{\limsup_{\ve\rightarrow 0} u^{\ve}(t_\ve,x_{\ve}) \,\mid\, (t_{\ve}, x_{\ve})\rightarrow (t,x)\right\}
\end{align*}
and
\begin{align*}
\liminfs u^{\ve}(t, x)&=\inf\left\{\liminf_{\ve\rightarrow 0} u^{\ve}(t_\ve,x_{\ve}) \,\mid\, (t_{\ve}, x_{\ve})\rightarrow (t,x)\right\}.
\end{align*}
Then let 
\begin{equation} \label{5.21}
\Theta^{1}:=\left\{(t,x)\in (0,\infty)\times\bbR^d \,\mid\, \liminfs u^{\ve}(t,x)=1\right\}^0
\end{equation}
and 
\begin{equation*} 
\Theta^{2}:= 
\left\{(t,x)\in (0,\infty)\times\bbR^d \,\mid\, \limsups u^{\ve}(t,x)=0\right\}^0.
\end{equation*}
It follows then that for any compact $K\subseteq \Theta^{1}$ we have
\begin{equation}\label{eq:locunif1}
\lim_{\ve\rightarrow 0} \inf_{(t,x)\in K}  u^{\ve}(t, x)=1, 
\end{equation}
while for any compact $ K\subseteq  \Theta^{2}$ we have
\begin{equation}\label{eq:locunif0}
\lim_{\ve\rightarrow 0} \sup_{(t,x) \in K}  u^{\ve}(t, x)=0.
\end{equation}
We now claim that the functions $\chi_{\Theta^{1}}$ and $\chi_{[(0, \infty)\times\RR^{d}]\setminus\Theta^{2}}$ are, respectively, a viscosity supersolution and a viscosity subsolution to \eqref{eq:hjeq}.

Let us start with $\overline{u}:=\chi_{\Theta^{1}}$, which is obviously lower semicontinuous, so $\overline{u}_{*}=\overline{u}$.  Let $\phi\in C^{1}((0, \infty)\times \RR^{d})$ be such that $\overline{u}-\phi$ has a strict local minimum at $(t_{0}, x_{0})$. Without loss of generality, we may assume $\phi\le \overline{u}$ and $\phi(t_{0}, x_{0})=\overline{u}(t_{0},x_{0})$. If $(t_0,x_{0})\in \Theta^{1}$ or $(t_0,x_{0})\in [(0,\infty)\times\RR^{d}]\setminus \overline{\Theta^{1}}$, then the claim in the supersolution part of Definition \ref{D.5.1} is  satisfied for $\phi$ and $(t_0,x_0)$ because $\overline{u}$ is locally constant at $(t_{0}, x_{0})$, which implies 
\begin{equation*}
\nabla \phi(t_{0}, x_{0})=0\qquad\text{and}\qquad \phi_{t}(t_{0}, x_{0})=0. 
\end{equation*}

It remains to consider the case $(t_0,x_0)\in \partial \Theta^{1}$, when $\overline{u}(t_0,x_0)=0$. 
Let us first assume that $\nabla \phi(t_{0}, x_{0})=0$, so that we need to show $\phi_{t}(t_{0}, x_{0})\ge 0$.  Hence let us assume, towards contradiction, that $\phi_{t}(t_{0}, x_{0})<0$. 
Then for each small enough $h\in\left(0,\frac{t_0}{2}\right)$ we have $\inf_{|x-x_0|<5c'h} \phi(t_{0}-2h,x)>0$ because $\phi\in C^1$ (fix one such $h$), so that $\{t_{0}-2h\}\times B_{5c'h}(x_0)\subseteq \Theta^1$.  
Thus we can apply Lemma \ref{L.4.2} to  $u_\ve(t,x):=u(t,x;(x_0\cdot e + 5c'h)\ve^{-1}, e)$ for any $e\in\mathbb S^{d-1}$ (with $u$ from Hypothesis H''(iii)) and $u_\ve'(t,x):=u^\ve(\ve t+t_0-2h, \ve x)$, which solve \eqref{4.0} and satisfy $u_\ve(0,\cdot)\le u_\ve'(0,\cdot)$ on $B_{5c'h/\ve}(\frac {x_0}\ve)$ for all small enough $\ve>0$.  The lemma and Hypothesis H''(iii) then show 
\[
\lim_{\ve\to 0} \, \inf_{(t,x)\in (h/\ve, 3h/\ve)\times B_{c'h/\ve}(x_0/\ve)} u_\ve'(t,x)=1.
\]
Therefore $(t_0-h,t_0+h)\times B_{c'h}(x_0)\subseteq\Theta^1$, contradicting $(t_0,x_0)\in \partial \Theta^{1}$.  

Let us now assume that $p:=-\nabla \phi(t_{0}, x_{0})\neq 0$ (and let $\hat p:=\frac p{|p|}$), so that we need to show $\phi_{t}(t_{0}, x_{0})\ge c^*(\hat p)|p|$.  Hence let us assume that $\phi_{t}(t_{0}, x_{0})<s|p|<c^*(\hat{p})|p|$ for some $s\in\RR$.  Since $\phi\in C^1$, it follows that for each small enough $h\in(0,t_0)$ we have $\phi(t_{0}-h,x)>0$ for all $x\in B_{2c'h}(x_0)$ such that $(x-x_0)\cdot \hat p \le -sh$ (fix one such $h$).  This means 
\[
\{t_{0}-h\}\times \left(B_{2c'h}(x_0)\cap \left\{x\,\mid\, (x-x_0)\cdot \hat p \le -sh \right\} \right)\subseteq \Theta^1.
\]
Thus we can apply Lemma \ref{L.4.2} to  $u_\ve(t,x):=u(t,x;(x_0\cdot \hat p-sh)\ve^{-1},\hat p)$ (with $u$ from Hypothesis H''(iii)) and $u_\ve'(t,x):=u^\ve(\ve t+t_0-h, \ve x)$, which solve \eqref{4.0} and satisfy $u_\ve(0,\cdot)\le u_\ve'(0,\cdot)$ on $B_{2c'h/\ve}(\frac {x_0}\ve)$ for all small enough $\ve>0$.  The lemma and Hypothesis H''(iii) then show 
\[
\lim_{\ve\to 0} \, \inf_{(t,x)\in ((h-\delta)/\ve, (h+\delta)/\ve)\times B_{\delta/\ve}(x_0/\ve)} u_\ve'(t,x)=1
\]
for some small $\delta>0$ because $c^*(\hat p)>s$.  But this now means that $(t_0-\delta,t_0+\delta)\times B_\delta(x_0)\subseteq\Theta^1$, contradicting $(t_0,x_0)\in \partial \Theta^{1}$.  Therefore $\chi_{\Theta^{1}}$ is indeed a viscosity supersolution to \eqref{eq:hjeq}.

The argument for $\overline{u}:=\chi_{[(0, \infty)\times\RR^{d}]\setminus\Theta^{2}}$ is similar, but using exclusivity of the front speeds $c^*(e)$.  Again notice that $\overline{u}$ is upper semicontinuous, so $\overline{u}^{*}=\overline{u}$.  Let $\phi\in C^{1}((0, \infty)\times \RR^{d})$ be such that $\overline{u}-\phi$ has a strict local maximum at $(t_{0}, x_{0})$. Without loss of generality, we may assume $\phi\ge \overline{u}$ and $\phi(t_{0}, x_{0})=\overline{u}(t_{0},x_{0})$. If $(t_0,x_{0})\in \Theta^{2}$ or $(t_0,x_{0})\in [(0,\infty)\times\RR^{d}]\setminus \overline{\Theta^{2}}$, then the claim in the subsolution part of  Definition \ref{D.5.1} is  satisfied for $\phi$ and $(t_0,x_0)$ because $\overline{u}$ is locally constant at $(t_{0}, x_{0})$, which implies 
\begin{equation*}
\nabla \phi(t_{0}, x_{0})=0\qquad\text{and}\qquad \phi_{t}(t_{0}, x_{0})=0. 
\end{equation*}

It remains to consider the case $(t_0,x_0)\in \partial \Theta^{2}$, when $\overline{u}(t_0,x_0)=1$. 
Let us first assume that $\nabla \phi(t_{0}, x_{0})=0$, so that we need to show $\phi_{t}(t_{0}, x_{0})\le 0$.  Hence let us assume that $\phi_{t}(t_{0}, x_{0})>0$. Then for each small enough $h\in\left(0,\frac{t_0}{2}\right)$ we have $\sup_{|x-x_0|<5c'h} \phi(t_{0}-2h,x)<1$ (fix one such $h$), which means that $\{t_{0}-2h\}\times B_{5c'h}(x_0)\subseteq  \Theta^2$.  
Thus we can apply Lemma \ref{L.4.2} to  $u_\ve'(t,x):=u(t,x;(x_0 - 5c'he)\ve^{-1}, e,\alpha)$ for any $e\in\mathbb S^{d-1}$ and any small $\alpha>0$ (with $u$ from Hypothesis H''(iii)) and $u_\ve(t,x):=u^\ve(\ve t+t_0-2h, \ve x)$, which solve \eqref{4.0} and satisfy $u_\ve(0,\cdot)\le u_\ve'(0,\cdot)$ on $B_{5c'h/\ve}(\frac {x_0}\ve)$ for all small enough $\ve>0$.  The lemma and Hypothesis H''(iii) with 
the compact $K:=\{x\in \overline{B_{6c'}(0)}\,:\, x\cdot e\ge \frac{c'}3\}$   (and any $\Lambda\ge h^{-1}|x_0| + 5c'$)  then show
\[
\limsup_{\ve\to 0} \, \sup_{(t,x)\in (h/\ve, 3h/\ve)\times B_{c'h/\ve}(x_0/\ve)} u_\ve(t,x) \le \beta_{K,e}(\alpha) \qquad(\to 0 \text{ as $\alpha\to 0$}).
\]
Therefore $(t_0-h,t_0+h)\times B_{c'h}(x_0)\subseteq \Theta^2$,
contradicting $(t_0,x_0)\in \partial \Theta^{2}$.

Let us now assume that $p:=-\nabla \phi(t_{0}, x_{0})\neq 0$ (and let $\hat p:=\frac p{|p|}$), so that we need to show $\phi_{t}(t_{0}, x_{0})\le c^*(\hat p)|p|$.  Hence let us assume that $\phi_{t}(t_{0}, x_{0})>s|p|>c^*(\hat{p})|p|$ for some $s\in\RR$.  Since $\phi\in C^1$, it follows that for each small enough $h\in(0,t_0)$ we have $\phi(t_{0}-h,x)<1$ for all $x\in B_{2c'h}(x_0)$ such that $(x-x_0)\cdot \hat p \ge -sh$ (fix one such $h$).  This means 
\[
\{t_{0}-h\}\times \left(B_{2c'h}(x_0)\cap \left\{x\,\mid\, (x-x_0)\cdot \hat p \ge -sh \right\} \right)\subseteq \Theta^2.
\]
Thus we can apply Lemma \ref{L.4.2} to  $u_\ve'(t,x):=u(t,x;(x_0 -sh\hat p)\ve^{-1},\hat p)$ (with $u$ from Hypothesis H''(iii)) and $u_\ve(t,x):=u^\ve(\ve t+t_0-h, \ve x)$, which solve \eqref{4.0} and satisfy $u_\ve(0,\cdot)\le u_\ve'(0,\cdot)$ on $B_{2c'h/\ve}(\frac {x_0}\ve)$ for all small enough $\ve>0$.  The lemma and Hypothesis H''(iii) then show 
\[
\limsup_{\ve\to 0} \, \inf_{t\in ((h-\delta)/\ve, (h+\delta)/\ve)\,\&\, x\in B_{\delta/\ve}(x_0/\ve)} u_\ve(t,x) \le \beta_{K,\hat p}(\alpha) \qquad(\to 0 \text{ as $\alpha\to 0$})
\]
for some small $\delta>0$ and some compact $K\subseteq \{x\cdot \hat p>0\}$
because $c^*(\hat p)<s$.  But this  means $(t_0-\delta,t_0+\delta)\times B_\delta(x_0)\subseteq\Theta^2$, contradicting $(t_0,x_0)\in \partial \Theta^{2}$.  Therefore $\chi_{[(0, \infty)\times\RR^{d}]\setminus\Theta^{2}}$ is indeed a viscosity subsolution to \eqref{eq:hjeq}.

We next need to show that both $\chi_{\Theta^1}$ and $\chi_{[(0, \infty)\times\RR^{d}]\setminus\Theta^{2}}$ satisfy initial condition \eqref{5.0}.  Let $\Theta^j_t:=\{x\,\mid\,(t,x)\in\Theta^j\}$ for $t>0$ and $j=1,2$, and pick any $s\in(0,\inf_{e\in\mathbb S^{d-1}} c^*(e))$. (Note that Theorem \ref{T.4.4} holds here because the second claim in Hypothesis H'(ii) was only used in its proof to show that $c^*(e)>0$, which we assume in H''(iii).  Therefore $c^*$ is continuous and hence $\inf_{e\in\mathbb S^{d-1}} c^*(e)>0$.)  An argument as in the last part of the proof that $\chi_{\Theta^1}$ is a supersolution, using Lemma \ref{L.4.2} and existence of strong front speeds in all directions, then shows that if $B_r(x)\subseteq A$ for some $x\in A$ and $r>0$, then $B_{r+sh}(x)\subseteq \Theta^1_h$ for all small enough $h>0$.  Repeating this recursively and using that $A$ is open we obtain $B_{st}(A)\subseteq \Theta^1_t$ for all $t>0$.  This shows that $\bigcap_{t>0} \Theta^1_t \supseteq \overline{A}$.  

Next pick any $s>\|c^*\|_\infty$.  An argument as in the last part of the proof that $\chi_{[(0, \infty)\times\RR^{d}]\setminus\Theta^{2}}$ is a subsolution, using Lemma \ref{L.4.2} and existence of strong exclusive front speeds in all directions, then shows that if now $B_r(x)\subseteq \bbR^d\setminus\overline{A}$, then $B_{r-sh}(x)\subseteq \Theta^2_h$ for all small enough $h>0$.  Repeating this recursively now yields $(\bbR^d\setminus\overline{A})^0_{st}\subseteq \Theta^2_t$ for all $t>0$.  
This, together with $\bigcap_{t>0} \Theta^1_t \supseteq \overline{A}$ and $\Theta^1\cap\Theta^2=\emptyset$, shows that in fact $\bigcap_{t\in(0,T)} \Theta^1_t = \overline{A}$ for each $T>0$ as well as $(\bbR^d\setminus\overline{A})^0_{st}\subseteq \Theta^2_t\subseteq \bbR^d\setminus\overline{A}$ for each $t>0$.

The former claim means that 
$(\chi_{\Theta^1})_*=\chi_{\Theta_*}$ where
\[
\Theta_*:=\Theta^1\cup [\{0\}\times(\overline{A})^0], 
\]
while the latter means that $(\chi_{[(0, \infty)\times\RR^{d}]\setminus\Theta^{2}})^*=\chi_{\Theta^*}$ where
\[
\Theta^*:= ([(0, \infty)\times\RR^{d}]\setminus\Theta^{2})\cup [\{0\}\times\overline{A}].
\]
Hence both $\chi_{\Theta^1}$ and $\chi_{[(0, \infty)\times\RR^{d}]\setminus\Theta^{2}}$ satisfy \eqref{5.0}.

Next we would like to apply Theorem \ref{thm:soravia}(iii) to the subsolution $\chi_{\Theta^*}$ and supersolution $\chi_{\Theta_*}$ but we cannot do that because they are not appropriately ordered at $t=0$. (In fact, $\Theta^1\cap\Theta^2=\emptyset$ even yields $\chi_{\Theta_*}\le \chi_{\Theta^*}$ pointwise.)  Nevertheless, since we proved that $(\overline{A}\subseteq)\, B_{sh}(A)\subseteq \Theta^1_h$ for some $s>0$ and all $h>0$, we can use the supersolution $\chi_{\Theta_*}^h(t,x):=\chi_{\Theta_*}(t+h,x)$ instead of $\chi_{\Theta_*}$.  Then Theorem \ref{thm:soravia}(iii) yields for each $h>0$ the second of the pointwise inequalities
\begin{equation} \label{5.55}
\chi_{\Theta_*}\le \chi_{\Theta^*}\le \chi_{\Theta_*}^h.
\end{equation}
But now for any $(T,y)\in(0,\infty)\times\RR^d$ we have 
\[
\int_{[0,T]\times B_1(y)} (\chi_{\Theta^*} - \chi_{\Theta_*})\,dxdt \le \int_{[T-h,T]\times B_1(y)} \chi_{\Theta_*}^h\,dxdt \to 0 \qquad\text{as $h\to 0$,}
\]
so $\chi_{\Theta_*}= \chi_{\Theta^*}$ up to a set of measure 0.  This of course
means that  $[(0, \infty)\times\RR^{d}]\setminus[\Theta^1\cup \Theta^2]$ has zero measure and $\chi_{\Theta^1}$ is actually a solution to \eqref{eq:hjeq}{\rm+}\eqref{5.0}.  Theorem \ref{thm:soravia}(i,ii) now shows that it is the unique solution, and also that $\Theta^1=\Theta^{A,c^*}$.
(We note that one can similarly show that $u\le \chi_{\Theta_*}^h$ for $h>0$ and any upper semicontinuous subsolution $u$ to \eqref{eq:hjeq}{\rm+}\eqref{5.0}, as well as $\chi_{\Theta^*}\le u^h$ for $h>0$ and any lower semicontinuous supersolution $u$.  This then yields an alternative proof of uniqueness of the viscosity solution $\chi_{\Theta^1}$.)

Since $[(0, \infty)\times\RR^{d}]\setminus[\Theta^1\cup \Theta^2]$ contains no open balls, it must be contained in $\partial\Theta^2\cup \partial\Theta^1$.  And since definitions of $\Theta^1$ and $\Theta^2$ show that
\[
\lim_{\ve\to 0}  u^\ve(t,x)=
\begin{cases}
1 & (t,x)\in\Theta^1, \\
0 & (t,x)\in\Theta^2\cup[\{0\}\times(\RR^d\setminus\overline{A})], \\
\end{cases}
\]
it remains to prove $\partial\Theta^2\cap [(0,\infty)\times\RR^d]\subseteq \partial\Theta^1$ (note that $\{0\}\times\overline{A}\subseteq \partial\Theta^1$).  But if $(t,x)\in \partial\Theta^2$ for some $t>0$, then $(t,x)\notin\Theta^1\cup\Theta^2$ because the two sets are open and disjoint.  This means that $\chi_{\Theta^*}(t,x)=1$, so \eqref{5.55} shows $(t+h,x)\in \Theta^1$ for all $h>0$. It follows that $(t,x)\in\partial\Theta^1$, and the proof is finished.  (In fact, \eqref{5.55} implies $\partial\Theta^2\cap [(0,\infty)\times\RR^d]= \partial\Theta^1\cap [(0,\infty)\times\RR^d]$.)  
\end{proof}

We can now also prove Theorem \ref{T.1.4}(iii). 

\begin{proof}[Proof of Theorem \ref{T.1.4}(iii) and of the corresponding part of Theorem \ref{T.1.8}]
The argument in the proof of Theorem \ref{T.1.6}(ii) above shows that there is a full measure set of $\om$ such that \eqref{genhomeq} (resp. \eqref{1.10}) with this $\om$ has strong front speed $c^*(e)$ in each direction $e\in \mathbb{S}^{d-1}$.
Fix any such $\om$ (then Hypothesis H'' will be satisfied with the word ``exclusive'' removed) and as in the proof of Theorem \ref{T.5.3}, we can assume without loss of generality that $y_{\ve}=0$. 

As in  the proof of Theorem \ref{T.5.3}, consider $\Theta^1$  from \eqref{5.21}.
Note that in that proof  we only used H''(i,ii), and the strong front speeds claim in H''(iii) to show that $\chi_{\Theta^{1}}$ is a (lower semicontinuous) viscosity supersolution to \eqref{eq:hjeq} and $\bigcap_{t>0} \Theta^1_t \supseteq \overline{A}$.  Therefore this is still the case now, even though existence of exclusive front speeds is not assumed.

Instead of exclusive front speeds, we will use convexity of  $A$.  Let
\begin{equation} \label{5.23}
\Theta^3 := \bigcap_{e\in \mathbb{S}^{d-1}} \left \{(t,x)\in (0,\infty)\times\bbR^d\,\mid\, x\cdot e <  \sup_{y\in \partial A} y\cdot e+c^*(e)t \right \}.
\end{equation}
We now claim that it suffices to show that $\Theta^3 \subseteq \Theta^{A,c^{*}}$.

Let us assume this is the case.  From the hypotheses, the definition of strong front speeds, and the comparison principle it follows that $\limsups u^{\ve}(t,x)=0$ for each $(t,x)$ in the set
\[
 ([0, \infty)\times\RR^{d})\setminus \overline{\Theta^{3}}  = \bigcup_{e\in \mathbb{S}^{d-1}}  \left\{
 (t,x)\in [0,\infty)\times\bbR^d\,\mid\, 
 x\cdot e >  \sup_{y\in \partial A} y\cdot e+c^*(e)t \right \}.
\]
(Note that $\overline{\Theta^3}\cap[\{0\}\times\RR^d]=\overline A$.)
It follows that $\Theta^1 \subseteq \overline{\Theta^{3}}\subseteq \overline{\Theta^{A,c^{*}}}$.  This and
\[
\overline{\Theta^{A,c^{*}}}\cap [\{0\}\times\RR^d]= \overline {A} \subseteq \bigcap_{t>0} \Theta^1_t
\]
(the equality being from the definition of $\Theta^{A,c^{*}}$ and $\|c^*\|_\infty<\infty$) allow us to apply the argument from the last proof (using time shifts by $h>0$ and Theorem \ref{thm:soravia}(iii)) to the subsolution $\chi_{\Theta^*}$ with $\Theta_*:=\Theta^1\cup [\{0\}\times(\overline{A})^0]$ and the (super)solution $\chi_{\Theta_*}$ with $\Theta^*:=\overline{\Theta^{A,c^{*}}}$.  This yields $\Theta^1=\Theta^3=\Theta^{A,c^*}$, finishing the proof.



It remains to prove $\Theta^3 \subseteq \Theta^{A,c^{*}}$.
Consider $\overline{v}_{0}$ defined by 
\begin{equation*}
\overline{v}_{0}(x)=\begin{cases}
{\rm dist}(x,\partial A)& x\in A,\\
-{\rm dist}(x,\partial A)& x\in \RR^d\setminus A.
\end{cases}
\end{equation*}
%
Then $\overline{v}_{0}$ is uniformly continuous, concave, and satisfies \eqref{eq:hjeq5'}.  It follows that ${\Theta^{A, c^{*}}}={\left\{\overline{v}>0\right\}}$ for $\overline{v}$ the unique viscosity solution of \eqref{eq:hjeq} with $\overline{v}(0,\cdot)=\overline{v}_{0}$.
Since $\overline{v}_{0}$ is concave and grows at most linearly, we can apply the Hopf-Lax formula for convex initial  data \cite{bardievans} to obtain that
\begin{equation*}
-\overline{v}(t,x)=\sup_{y\in \RR^{d}} \inf_{z\in \RR^{d}} \left\{-\overline{v}_{0}(z)+(x-z)\cdot y-c^{*}\left(\frac{y}{|y|}\right)|y|t\right\}. 
\end{equation*}
Since $\overline{v}_0|_{\partial A}=0$, 
 it follows that 
\[
\overline{v}(t,x)\geq \inf_{y\in \RR^{d}} \sup_{z\in \partial A}\left\{(z-x)\cdot y+c^{*}\left(\frac{y}{|y|}\right)|y|t\right\}. 
\]
Then obviously $\overline v(t,x)>0$ for each $(t,x)\in (0,\infty)\times\RR^d$ such that
\begin{equation}\label{5.56}
\inf_{e\in \mathbb S^{d-1}} \sup_{z\in \partial A}\left\{(z-x)\cdot e+c^{*}\left(e\right)t\right\}>0.
\end{equation}
But compactness of $\mathbb S^{d-1}$ and continuity of $c^*$ show that these are precisely the points from $\Theta^3$, so indeed $\Theta^3 \subseteq \Theta^{A,c^{*}}$.
\end{proof}

It follows from the above that the set $\Theta^{A,c^{*}}$ is precisely the set of $(t,x)$ satisfying \eqref{5.56} when $A$ is convex.  Convexity shows that these are obviously those points satisfying 
\[
\inf_{z\in \partial A} \inf_{e\in n_z} \left\{(z-x)\cdot e+c^{*}\left(e\right)t\right\}>0,
\]
where $n_z$ is the set of all outer unit vectors for $A$ at $z\in\partial A$.  When $A$ is convex, bounded, and $C^1$ (so $n_z$ contains a single vector for each $z$), the characteristic function of this set has also been proved to be the homogenization limit in the case of periodic monostable reactions \cite{alfarogiletti}.

\bibliographystyle{amsplain}
\bibliography{ign_homog_bib}

%
\end{document}